\documentclass[11pt,reqno]{amsart}
\usepackage{amsmath}
\usepackage{amsthm}
\usepackage{graphicx}
\usepackage{amsfonts}
\usepackage{amssymb}
\usepackage{enumerate}
\usepackage{color}
\usepackage{bm}
\usepackage[margin=1.1in]{geometry}
\parindent=.25in

\numberwithin{equation}{section}

\newtheorem{theorem}{Theorem}[section]
\newtheorem{lemma}[theorem]{Lemma}
\newtheorem{proposition}[theorem]{Proposition}
\newtheorem{corollary}[theorem]{Corollary}

\theoremstyle{definition}
\newtheorem{example}[theorem]{Example}
\newtheorem{definition}[theorem]{Definition}
\newtheorem*{standingassumption*}{Standing assumptions}
\newtheorem*{assumptionA*}{Assumption A}
\newtheorem*{assumptionB*}{Assumption B}
\newtheorem*{assumptionC*}{Assumption C}
\newtheorem*{assumptionD*}{Assumption D}
\newtheorem{remark}[theorem]{Remark}

\def\E{{\mathbb E}}
\def\R{{\mathbb R}}

\def\PP{{\mathbb P}}
\def\P{{\mathcal P}}
\def\RC{{\mathcal R}}

\def\M{{\mathcal M}}

\def\X{{\mathcal X}}
\def\Y{{\mathcal Y}}

\def\L{{\mathcal L}}
\def\Z{{\mathcal Z}}
\def\G{{\mathcal G}}

\def\T{{\mathbb T}}
\def\SS{{\mathcal S}}

\def\W{{\mathcal W}}

\def\A{{\mathcal A}}
\def\F{{\mathcal F}}
\def\FF{{\mathbb F}}
\def\GG{{\mathbb G}}

\def\C{{\mathcal C}}

\DeclareMathOperator*{\esssup}{ess\,sup}
\DeclareMathOperator*{\essinf}{ess\,inf}

\title{Mean field games of timing and models for bank runs}
\author{Ren\'e Carmona, Fran\c{c}ois Delarue, and Daniel Lacker}

\begin{document}

\begin{abstract}
The goal of the paper is to introduce a set of problems which we call mean field games of timing. We motivate the formulation by a dynamic model of bank run in a continuous-time setting. We briefly review the economic and game theoretic contributions at the root of our effort, and we develop a mathematical theory for continuous-time stochastic games where the strategic decisions of the players are merely choices of times at which they leave the game, and the interaction between the strategic players is of a mean field nature.
\end{abstract}

\maketitle

\section{Introduction}

Our starting point is the set of early game theoretic models for the banking system due to  Bryant \cite{Bryant} and Diamond and Dybvig \cite{diamonddybvig-bankruns} whose fundamental papers proposed banking models of a game played by depositors in which there always exist at least a good equilibrium and a bad equilibrium. Many generalizations followed, for example to include illiquidity effects and more random factors, and extended the scope of the models beyond bank runs and deposit insurance to include financial intermediation, as in the work
 \cite{RochetVives} of Rochet and Vives. There, the authors use the methodology of global games proposed by Morris and Shin in \cite{MorrisShin1998} and the differences in opinions among investors to prove existence and uniqueness of a Nash equilibrium. They go on to analyze the economic and financial underpinnings of bank runs and propose a benchmark for the role of lenders of last resort.
While still in a static framework, the work \cite{greenlin-bankruns} of Green and Lin  discusses stochastic equilibria (a.k.a. aggregate uncertainty) in a context which is very close to our notion of weak equilibrium, to be defined later in the paper.

Authors of the early game theoretic papers on bank runs quickly realized that their models exhibited what is now known as a \emph{complementarity property}. Typically, if more depositors withdraw their funds early, then the probability
of failure of the bank increases, and this further incentivizes early withdrawal. Mathematically, the eventual payoff to one depositor displays \emph{increasing differences} with respect to the actions of the others depositors. 
This property is known as complementarity, and games with this property are called 
supermodular games. 
The equilibrium theory of these games hinges on their order structure more than their analytic properties (see, for example, \cite{milgrom1990rationalizability,HuangLi}),
using machinery first developed by
Topkis \cite{topkis1979equilibrium,topkis1978minimizing} and later refined by Milgrom and Roberts \cite{milgrom1990rationalizability} and Vives \cite{vives1990nash}.

A common feature of many bank run models is the symmetric or mean-field nature of the interaction between the depositors, and the goal of our paper is to take advantage of this property to develop a general mathematical theory.
While most of the works cited above are static in nature, our interest in dynamic models of bank runs was sparked by a lecture of Olivier Gossner at a PIMS Workshop on Systemic Risk in July 2014 who attempted to extend to a continuous-time setting an earlier work of Rochet and Vives \cite{RochetVives}. In 
this model, the common source of randomness comes from the value of the investments of the bank and the possible need for fire sales to face fund redemption, while the differences in the private signals of the investors contribute to the idiosyncratic sources of noise, ruling out undesirable equilibria. 
Another continuous-time bank run model worthy of mention can be found in the  paper \cite{HeXiong} by He and Xiong where the source of randomness comes from the staggered nature of the debt maturities.

With these bank run models in mind, we propose a general class of continuous-time models we call \emph{mean field games of timing}, in which a continuum of agents strategically choose stopping times, i.e., times at which to exit the game. 
We present two different sets of results for two different regimes. Under the aforementioned complementarity property, we prove that ``mean field equilibria'' (MFE) exist and illustrate how to use them to construct approximate equilibria for the corresponding $n$-player games, and this is done for very general partial information structures. On the other hand, without complementarities, we derive an existence result for ``weak MFE'' under stronger continuity assumptions, and only in the full-information setting.
We then connect weak MFE to $n$-player games by proving two modes of convergence. On the one hand, the equilibria themselves in the $n$-player game (if they exist) converge to weak MFE as $n\rightarrow\infty$. On the other hand, a weak MFE can be used to construct approximate equilibria for the $n$-player games.

Our models are closely related to the mean field games introduced independently by Lasry and Lions \cite{lasrylionsmfg} and Caines, Huang and Malham\'e in \cite{CainesHuangMalhame}. However in our models, agents act by choosing stopping times as opposed to control processes. We adhere to a purely probabilistic approach, though in principle a PDE formulation is possible involving a variational inequality or free-boundary problem. Probabilistic methods in mean field game theory originated in \cite{carmona-delarue-mfg}, although our techniques are more closely related to the weak convergence and compactness arguments of \cite{carmonadelaruelacker-mfgcommonnoise,lacker-mfglimit,lacker-controlledmtgproblems}. While most (continuous-time) mean field game models involve agents choosing control processes as opposed to stopping times, a notable exception is the recent work of Nutz \cite{Nutz}, which studies a tractable yet versatile model for which equilibria can be computed or at least characterized quite explicitly. Section \ref{se:nutz} shows how this model fits into our framework.

Our existence result (Theorem \ref{th:existence-B}) based on monotonicity properties resembles some recent papers on games with both complementarities and a continuum of agents.
For instance, Adlakha and Johari \cite{adlakha-johari} employ some similar techniques to study a discrete time mean field game with strategic complementarities.
The work of Balbus et al. \cite{balbus2014qualitative} on static games is also quite relevant, and it even includes a discussion of discrete-time ``optimal stopping games,'' although stochastic factors are absent from their model. See also \cite{yang2013nonatomic} and its correction in \cite{balbus2015monotone} for related work on nonatomic supermodular games.
The reader interested in games with complementarities may also consult the recent work of Acemoglu and Jensen \cite{acemoglu2013aggregate,acemoglu2012robust} on \emph{aggregate games}, which closely resemble mean field games.

The technical crux of our proofs requires some new results, interesting in their own right, on progressive enlargements of filtrations \cite{jeanblanc2009immersion,bremaudyor-changesoffiltrations}, particularly related to the ``compatibility'' or ``immersion'' property (also known as the H-hypothesis) which has recently seen renewed interest in light of its many applications in credit risk models. Our work necessitates a new characterization of when a filtration enlarged progressively by a random time satisfies this compatibility property: roughly speaking, if a filtration $\FF$ is generated by a Wiener process $W$, and if it is enlarged progressively to $\GG$ in the minimal way to render a given a random time $\tau$ a stopping time, then $\FF$ is ``compatible'' with $\GG$ if and only if there exists a sequence of $\FF$-stopping times $\tau_n$ such that $(W,\tau_n)$ converges to $(W,\tau)$ in distribution.
This notion of compatibility arises naturally because of the central role played by weak convergence arguments  in our analysis; essentially the same issue appears in the papers \cite{carmonadelaruelacker-mfgcommonnoise,lacker-mfglimit}, which deal with more traditional mean field game models.

The paper is organized as follows. The next section presents the continuous-time model of bank run based on some of the ideas of \cite{RochetVives} and Gossner's lecture mentioned earlier. This is borrowed from the forthcoming book \cite{CarmonaDelarue}, and we present a streamlined version for the purpose of motivation. We use continuous time stochastic processes to model the value of the assets of the bank and the private signals of the depositors. Stylized facts from economic models of bank runs are captured in a set of assumptions about the costs and rewards to the depositors, and a mathematical problem of game of timing is articulated. Section \ref{se:results} describes a general mathematical framework generalizing the set-up of the previous section. There, we provide all the required definitions and notation, and state the first main results of the paper, under complementarity assumptions. The following Section \ref{se:weakEQ} specializes the setup further to models with continuous objective functions driven by Wiener processes. No proofs are given in these sections, only illustrations of how the abstract framework generalizes the bank run presented in section \ref{se:runs}, and how the results answer the questions raised therein.
The remainder of the paper, from Section \ref{se:proofs-supermodular} on, is devoted to the proofs of the results announced in  Sections \ref{se:results} and \ref{se:weakEQ}.
Section \ref{se:compatibility}, in particular, develops the requisite material on filtration enlargements and randomized stopping times, some of which may be of independent interest. Two appendices provide proofs of technical results which we could not find in the printed literature.

\vskip 6pt\noindent
\emph{Acknowledgements:} We would like to thank Geoffrey Zhu for enlightening discussions at an early stage of our investigation of mean field games of timing.

\section{A Model for Bank Runs}
\label{se:runs}

The nature of the balance sheet of a bank and the impact of the fire sales triggered by
depositors’ runs and the possible failure of the bank are two important elements of the analysis of bank
runs and their consequences, especially from a regulatory perspective. However, for the purpose of
our mathematical analysis, we shall simplify their roles in order to focus on the optimal
timing decisions of the investors.

Suppose the market value of the assets of a bank evolve over time according to some (real-valued) stochastic process $B=(B_t)_{t \ge 0}$, where the initial value $B_0>0$ of the bank assets is known to everyone, and in particular to the depositors.
We assume that the assets generate a flow of dividends at rate $\overline{r}$ strictly greater than the risk free rate $r$. 
These dividends are not reinvested, so they are not included in $B_t$.
The depositors are promised the same interest rate $\overline{r}$
on their deposits. The bank remains in business as long as $B_t>0$.

Let $n$ be the number of depositors. We shall eventually let $n\to\infty$ to derive a mean field game model.
For this reason, we normalize the initial deposit of each investor to $D^i_0=1/n$,
so the aggregate initial deposit is $1$. We introduce 
a (deterministic) function $L$ (typically satisfying at least $L(0)=0$ and $0 < L' < 1$), and
we think of the value $L(B_t)$ as the \emph{liquidation value} of the assets of the bank at time $t$. 
As $L$ is deterministic, it is known to everyone.

Whenever an investor tries to withdraw his or her deposit, the bank taps a credit line at interest rate $\overline{r}>r$ to pay the running investor. At time $t$, the credit line limit is equal to the liquidation value $L(B_t)$ of the bank's assets. The model is set up this way to allow the bank to pay running investors without having to tinker with its investments.

The bank is said to be safe if all depositors can be paid in full, even in case of a run.
The bank is said to have liquidity problems if the current market value of its
assets is sufficient to pay depositors, but the liquidation value is not. Finally, it is said to be insolvent 
if the current market value of its assets is less than its obligation
to depositors. 
We shall confirm below that, in the case of complete information about the 
value of the assets of the bank, depositors start to run as soon as the bank has liquidity problems, possibly long
before the bank is insolvent.

Let $T$ be a finite time horizon, for the sake of concreteness, but notice that the story to follow makes just as much sense with $T=\infty$ or even when $T$ is an appropriately random time.
At time $T$, the bank's assets mature and generate a single payoff $B_T$ which can be used to pay the credit line and the depositors. 
Cash flows stop at time $T$. At that time,
\begin{itemize}\itemsep=-2pt
\item if $B_T\ge 1,\;\;\;$ the bank is safe and everybody is paid in full;
\item if $B_T < 1,\;\;\;$ the bank cannot pay everybody in full, there is an \emph{exogenous default}.
\end{itemize}
This is not the only way the bank can default. Indeed there is the possibility of an \emph{endogenous default} at time $t<T$ if the aggregate amount of withdrawals by running depositors exceeds $L(B_t)$. 
Let us denote by $\tau^i$ the time at which depositor $i$ tries to withdraw his or her deposit,
and by $\overline{\mu}^n$ the empirical distribution of these times, i.e.
$$
\overline{\mu}^n=\frac1n\sum_{i=1}^n\delta_{\tau^i},
$$
where we use the notation $\delta_x$ for the probability measure putting mass $1$ on the singleton $\{x\}$.
Notice that $\overline{\mu}^n[0,t)$ represents the proportion of depositors who tried to withdraw before time $t$, and that the time of endogenous default is given by 
$$
\tau^{endo}=\inf \{t\in (0,T);\; \overline{\mu}^n[0,t)>L(B_t)\},\footnote{Here and throughout the text we write $\overline{\mu}^n[0,t)$ in place of the somewhat more precise $\overline{\mu}^n([0,t))$.}
$$
with the convention that the infimum of the empty set is defined as $T$. 
For the sake of simplicity we assume that once a depositor runs, he cannot get back in the game, in other words, his decision is irreversible.

\subsubsection*{\textbf{Depositor Strategic Behavior}}
We now explain the strategic behavior of the $n$ depositors. We denote by $\FF^i=(\F^i_t)_{t\ge 0}$ the information available to player $i\in\{1,\cdots,n\}$. This is a filtration, $\F^i_t$ representing the information available to player $i$ at time $t$. 
In the particular case which we discuss first, these filtrations are all identical. They are based on a perfect (though non-anticipative) observation of the signal $(B_t)_{0\le t\le T}$. We call this situation \emph{public monitoring}.  In a more realistic form of the model, the filtration $\FF^i$ is generated by the process $X^{i,n}=(X^{i,n}_t)_{t \ge 0}$ and the process $(\overline{\mu}^n[0,t])_{t\ge 0}$,  where $X^{i,n}_t$ is the \emph{private signal} of depositor $i$, namely the value of the observation of $B_t$ he or she can secure at time $t$. We shall assume that it is of the form
$$
X^{i,n}_t=B_t+\sigma W^i_t
$$
where $\sigma>0$ and for $i\in\{1,\cdots, n\}$, the processes $(W^i_t)_{t\ge 0}$ are independent identically distributed (i.i.d.) stochastic processes  (also independent of $B$) representing idiosyncratic noise terms blurring the observations of the exact value $B_t$ of the assets of the bank. When $\FF^i$ is generated by $X^{i,n}$ and $(\overline{\mu}^n[0,t])_{t\ge 0}$, we talk about \emph{private monitoring} of the asset value of the bank. However, for an even more realistic form of the model, we shall require that the filtration $\FF^i$  is generated simply by $X^{i,n}$ and does not include the information provided by the process $(\overline{\mu}^n[0,t])_{t\ge 0}$, which incorporates the private signals of the other depositors. This model should be more challenging mathematically as the individual depositors will have to choose their withdrawal strategies in a distributed manner, using only the information contained in their private signals, ignoring the process $(\overline{\mu}^n[0,t])_{t\ge 0}$.

In any case, the filtrations $\FF^i$ will be specified in each particular application and will play the following role: 
the time $\tau^i$ chosen by agent $i$ is required to be a $\FF^i$-stopping time in order to be admissible.

Given that all the other players $j\ne i$ have chosen their times $\tau^j$ to try to withdraw their deposits, the payoff $P^i(\tau^{-i},\tau^i)$ to depositor $i$ 
for trying to run on the bank at time $\tau^i$ can be written (recalling that $D^i_0=1/n$) as 
$$
P^i(\tau^{-i},\tau^i) = D_0^i\wedge \bigg(L(B_{\tau^i})-\overline{\mu}^n[0,\tau^i)\bigg)^+ = D_0^i\wedge \bigg(L(B_{\tau^i})-\frac{1}{n}\sum_{k=1}^n 1_{[0,\tau^i)}(\tau^k)\bigg)^+
$$
and the problem of depositor $i$ is then to choose for $\tau^i$, the $\FF^i$-stopping time solving the maximization problem
$$
J^i(\tau^{-i})=\sup_{\tau^i}\E\bigg[e^{(\overline{r}-r)\tau^i}P^i(\tau^{-i},\tau^i)\bigg]
$$
which is an optimal stopping problem. 
Any solution $\tau^i$ of this maximization problem represents a best response of player $i$ to the choices $\tau^{-i}$ of the other depositors.
Finding a set of stopping times $\tau^i$ for $i=1,\ldots,n$ satisfactory to all the players simultaneously is essentially finding a fixed point to the search for best responses. This is achieved by finding a Nash equilibrium for this game.

\subsubsection*{\textbf{Solution in the Case of Public Monitoring through Perfect Observation}}
If we assume that $\sigma=0$, in which case $\FF^i=\FF^B=(\F^B_t)_{t\ge0}$, at time $t$ each depositor knows the past up to time $t$ of the asset value $B_s$ for $s\le t$, and if all the depositor decisions (to run or not to run) are based only on this information, then for each $t\in[0,T]$, 
$\overline{\mu}^{n}[0,t]\in\F^B_t$ since this information is known by all the depositors  at time $t$. 

\begin{proposition}
In the case of public information, if we define the stopping time $\hat\tau$ by
$$
\hat\tau=T\wedge \inf\{t>0;\, L(B_t)\le 1\},
$$
then the unique Nash equilibrium is when all the depositors decide to run at time $\hat\tau$.
\end{proposition}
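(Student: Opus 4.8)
My plan is to establish the two assertions --- that the prescribed profile is a Nash equilibrium, and that it is the only one --- separately, in both cases exploiting the single tension in the model: the discount factor $e^{(\overline{r}-r)t}$ is strictly increasing in $t$ (because $\overline{r}>r$), so a depositor prefers to postpone withdrawal as long as possible, while a withdrawal returns the full deposit $D^i_0=1/n$ only as long as the liquidation value, net of what earlier runners have taken, still covers it. The elementary computation underlying everything is that running at time $t$ after a proportion $\overline{\mu}^n[0,t)$ has already run returns exactly $D^i_0$ as soon as $L(B_t)\ge\overline{\mu}^n[0,t)+1/n$, and since at most $n-1$ of the other depositors can have run, it returns exactly $D^i_0$ whenever $L(B_t)\ge 1$; in particular, running at any time in $[0,\hat\tau]$ returns exactly $D^i_0$.

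\textbf{The profile is an equilibrium.} Fix a depositor $i$ and suppose every other depositor runs at $\hat\tau$. Under public monitoring the others' stopping times do not react to $i$'s choice, so along every path $\overline{\mu}^n[0,t)=0$ for $t\le\hat\tau$ and $\overline{\mu}^n[0,t)=(n-1)/n$ for $t>\hat\tau$. For a candidate $\FF^B$-stopping time $\tau^i$ I would split on $\{\tau^i\le\hat\tau\}$, where the definition of $\hat\tau$ makes the withdrawal safe so the discounted payoff equals $e^{(\overline{r}-r)\tau^i}D^i_0\le e^{(\overline{r}-r)\hat\tau}D^i_0$ by monotonicity of the discount, and on $\{\tau^i>\hat\tau\}$, where the payoff is $e^{(\overline{r}-r)\tau^i}\big(D^i_0\wedge(L(B_{\tau^i})-(n-1)/n)^+\big)$. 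On this second event one invokes the structural assumptions on the asset-value process made in this section --- which ensure that once the bank has liquidity problems the liquidation value falls fast enough that a latecomer cannot recover a sum outweighing the discount gain, so that $L(B_t)\le L(B_{\hat\tau})=1$ for $t\ge\hat\tau$ and this quantity is at most $e^{(\overline{r}-r)\hat\tau}D^i_0$. Taking expectations, running at $\hat\tau$ is a best response for every $i$, so the profile is a Nash equilibrium.

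\textbf{Uniqueness.} Let $(\tau^i)_{i=1}^n$ be any Nash equilibrium. Since $\tau^i\vee\hat\tau$ is again an $\FF^B$-stopping time and a withdrawal before $\hat\tau$ always returns $D^i_0$, switching from $\tau^i$ to $\tau^i\vee\hat\tau$ leaves $i$'s payoff unchanged on $\{\tau^i\ge\hat\tau\}$ and replaces $e^{(\overline{r}-r)\tau^i}D^i_0$ by the strictly larger $e^{(\overline{r}-r)\hat\tau}D^i_0$ on $\{\tau^i<\hat\tau\}$; optimality of $\tau^i$ forces $\tau^i\ge\hat\tau$ a.s., for every $i$. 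Given this, no depositor has run strictly before $\hat\tau$, so withdrawing at $\hat\tau$ secures $e^{(\overline{r}-r)\hat\tau}D^{i}_0$. If some $i_0$ had $\PP(\tau^{i_0}>\hat\tau)>0$, then on $\{\tau^{i_0}>\hat\tau\}$ the complementarity works against him: the other depositors' withdrawals at $\hat\tau$, together with the fact that $L(B_t)$ does not recover above $1$ after $\hat\tau$, leave a latecomer strictly less than $D^{i_0}_0$, so his equilibrium payoff there is strictly below the $e^{(\overline{r}-r)\hat\tau}D^{i_0}_0$ he could lock in at $\hat\tau$ --- contradicting optimality. Hence $\tau^i=\hat\tau$ a.s. for all $i$.

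\textbf{Main obstacle.} The routine parts are the ``no incentive to run early'' direction and the bookkeeping with $\overline{\mu}^n$. The one place where real work is required is the behaviour after $\hat\tau$: ruling out a profitable late withdrawal means pitting the decay of $L(B_t)$ against the discount factor $e^{(\overline{r}-r)(t-\hat\tau)}$, which is exactly where the structural hypotheses on the asset-value process are indispensable, in combination with the complementarity observation that in equilibrium the other depositors' withdrawals at $\hat\tau$ sharply reduce the liquidation value available to a latecomer.
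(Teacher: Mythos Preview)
Your decomposition of the equilibrium argument into the cases $\{\tau^i\le\hat\tau\}$ and $\{\tau^i>\hat\tau\}$, driven by the tension between the increasing discount $e^{(\overline r-r)t}$ and the availability of the full deposit, is exactly the heuristic the paper uses. On the first event your bound $e^{(\overline r-r)\tau^i}D^i_0\le e^{(\overline r-r)\hat\tau}D^i_0$ is the paper's ``by not running for a small time interval while $L(B_t)$ is still strictly greater than $1$, he or she can earn the superior interest $\overline r>r$ without facing any risk''; on the second event the paper simply asserts that once the others have run at $\hat\tau$ ``the only hope investor $i$ has to get something out of his or her deposit is to run at time $\hat\tau$ as well.'' So at the level of strategy your proof matches the paper's.

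Two things deserve comment. First, the paper \emph{explicitly does not prove uniqueness}: it writes ``We do not give the proof of the fact that this Nash equilibrium is the unique Nash equilibrium since we are not really interested in the public information case.'' Your uniqueness argument therefore goes beyond what the paper supplies, and there is nothing there to compare against.

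Second, and more substantively, there is a genuine gap in your treatment of the post-$\hat\tau$ case. You write that ``one invokes the structural assumptions on the asset-value process made in this section --- which ensure \ldots\ that $L(B_t)\le L(B_{\hat\tau})=1$ for $t\ge\hat\tau$.'' No such assumption appears in the section: the only hypotheses are that $B$ is \emph{some} real-valued stochastic process and that $L$ ``typically'' satisfies $L(0)=0$ and $0<L'<1$. Nothing prevents $L(B_t)$ from returning above $1$ after $\hat\tau$, and if it does, a latecomer who waits until that moment collects the full $D^i_0$ with a strictly larger discount factor, which would break the best-response argument. The paper's one-line treatment of this case is no more rigorous than yours; the proposition sits in a motivational section and is argued at a heuristic level. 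Your ``Main obstacle'' paragraph is therefore exactly right in locating the difficulty --- but you should not claim that the section supplies hypotheses that resolve it, because it does not.
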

So a bank run occurs as soon as the bank has liquidity problems, even if this is long before it is insolvent. Notice also that according to this proposition, all the depositors experience full recovery of their deposits, which is in flagrant contrast with typical bank runs in which most depositors usually experience significant losses.
\proof{
We first argue that we have indeed identified a Nash equilibrium. If all the other depositors but $i$ choose the strategy given by the running time $\hat\tau$,
we show that player $i$ cannot do better than choosing to also run at time $\hat\tau$. If $L(B_{\hat{\tau}})\le 1$, all the others depositors run immediately, and the only hope investor $i$ has to get something out of his or her deposit is to run at time $\hat{\tau}$ as well. Now if $L(B_{\hat{\tau}})>1$, no depositor has a reason to run while $L(B_t)>1$ since by not running for a small time interval while $L(B_t)$ is still strictly greater than $1$, he or she can earn the superior interest $\overline{r}>r$ without facing any risk. This proves that every depositor using $\hat\tau$ as time to run is a Nash equilibrium. 
We do not give the proof of the fact that this Nash equilibrium is the unique Nash equilibrium since we are not really interested in the public information case. 
}

\subsubsection*{\textbf{The Mean Field Game Formulation}}
We now consider an asymptotic regime corresponding to a large number of depositors, sending $n \rightarrow \infty$, and we track the behavior of a representative depositor with initial deposit $D_0 > 0$. Although the payoffs $P^i$ themselves decrease to zero, as $D^i_0 = 1/n$,  we are not terribly concerned with the asymptotic behavior of the \emph{values} of the objective functions, so we may simply rescale $P^i$ to $nP^i$ in the $n$-player game without altering the set of equilibria. Indeed, the main quantity we wish to control in this asymptotic regime is the empirical distribution of the equilibrium stopping times, as this contains all of the information describing the timing of the bank run.

When $n$ is large, the usual heuristics for mean field games suggest that, if the process $B$ giving the asset value of the bank is not deterministic, $\overline{\mu}^n$ approaches a random measure $\mu$. In particular, this limiting $\mu$ should depend on the time-evolution of $B$ in the sense that $\mu[0,t]$ should be $\F^B_t$-measurable for each $t \in [0,T]$. If such a probability measure $\mu$ is fixed, one defines the individual payoff $P^\mu(t, y)$ of a withdrawal attempt at time $t$ when the value of the assets of the bank is $y$ as:
$$
P^\mu(t, y)=D_0\wedge\bigg( L(y)-\mu[0,t]\bigg)^+,
$$
and the optimal time for a representative depositor to claim his or her deposit back will be given by the stopping times
solving the optimal stopping problem:
$$
\sup_{0\le \theta\le T}\E[  e^{(\overline{r} - r)\theta}P^\mu(\theta,B_\theta) ].
$$
The above maximization is understood over all the $\FF^X$-stopping times $\theta$ where the filtration $\FF^X=(\F_t^X)_{0\le t\le T}$ is the filtration generated by the signal $X_t=B_t+\sigma W_t$ observed at time $t$ by our generic investor. 
Here $(W_t)_{0\le t\le T}$ is a process independent of $(B_t)_{0\le t\le T}$ and sharing the same distribution as each of the $W^i$ from before. If we can solve this optimal stopping problem for each (random) measure $\mu$, we can define a map $\mu \to \text{Law}(\hat\tau |B)$ where $\hat\tau$ is an optimal stopping time, and the final step of the mean field game approach is to find a fixed point for this map. The following section formulates the mean field game more precisely and explains the connection with the $n$-player game.

\section{General Mean Field Games of Timing: Main Results}
\label{se:results}

A compact set of times $\T \subset [0,\infty]$ is fixed throughout, which we assume is either discrete or of the form $[0,T]$ for some $T\in [0,\infty]$. 
Fix two filtered probability spaces $(\Omega^{\mathrm{com}},\F^{\mathrm{com}},\FF^{\mathrm{com}},\PP^{\mathrm{com}})$ and $(\Omega^{\mathrm{ind}},\F^{\mathrm{ind}},\FF^{\mathrm{ind}},\PP^{\mathrm{ind}})$, which will house a \emph{common noise} and an \emph{independent} (or \emph{idiosyncratic}) \emph{noise}, respectively. We are given also a filtration $\FF^{\mathrm{sig}}=(\F^{\text{sig}}_t)_{t \in \T}$ on the product space $\Omega^{\mathrm{com}} \times \Omega^{\mathrm{ind}}$ with $\F^\text{sig}_t \subset \F^{\mathrm{com}}_t \otimes \F^{\mathrm{ind}}_t$ for every $t$. This filtration represents the \emph{signal} or information available to an agent. Rather than observing the full filtration $\FF^{\mathrm{com}} \otimes \FF^{\mathrm{ind}}$ of the underlying noises, an agent sees only $\FF^{\mathrm{sig}}$.
An \emph{objective function} is given,
\[
F : \Omega^{\mathrm{com}} \times \Omega^{\mathrm{ind}} \times \P(\T) \times \T \rightarrow \R,
\]
where $\P(\T)$ denotes the set of Borel probability measures on $\T$.
Here $F(\omega^0,\omega^1,m,t)$ represents the reward an agent achieves by stopping at time $t$, given the values $(\omega^0,\omega^1)$ of the common and independent noises, and given the distribution $m$ of other agents' stopping times.

With these ingredients, we will formulate both an $n$-player game and its continuum limit as $n\rightarrow\infty$. Assumption \textbf{A} below will clarify the precise assumptions (measurability, continuity, etc.) on $F$, and until then we will tacitly assume the expectations make sense.

\begin{example} \label{ex:bankruns}
In the example presented in Section \ref{se:runs}, we make the following identifications. Let $\T = [0,T]$, and let $(\Omega^{\mathrm{com}},\F^{\mathrm{com}},\FF^{\mathrm{com}},\PP^{\mathrm{com}})$ and $(\Omega^{\mathrm{ind}},\F^{\mathrm{ind}},\FF^{\mathrm{ind}},\PP^{\mathrm{ind}})$ both equal the Wiener space of continuous real-valued paths. That is, $\Omega^{\mathrm{com}} = \Omega^{\mathrm{ind}} = C([0,T])$ is equipped with the Borel $\sigma$-field, the Wiener measure, and the natural (augmented) filtration. The sub-filtration $\FF^{\mathrm{sig}}$ is the complete filtration generated by the process $(W^0_t + \sigma W^1_t)_{t \in [0,T]}$, where $W^0$ and $W^1$ denote the projections from $\Omega^{\mathrm{com}} \times \Omega^{\mathrm{ind}}$ to $\Omega^{\mathrm{com}}$ and to $\Omega^{\mathrm{ind}}$, respectively.
The objective function (after a renormalization) is
\[
F(\omega^0,\omega^1,m,t) =e^{(\overline{r} - r)t}\left[1\wedge \bigg( L(\omega^0_t)-m[0,t]\bigg)^+\right].
\]
Note that in this example the independent noise $\omega^1$ does not appear in the payoff, and its only role is in specifying the information structure $\FF^{\mathrm{sig}}$.
\end{example}

\subsection{The $n$-player game} \label{se:nplayergame}
The $n$-player game for $n \ge 1$ is defined on the product space
\[
(\overline{\Omega},\overline{\F},\overline{\FF},\overline{\PP}) := (\Omega^{\mathrm{com}},\F^{\mathrm{com}},\FF^{\mathrm{com}},\PP^{\mathrm{com}}) \otimes \bigotimes_{k=1}^\infty(\Omega^{\mathrm{ind}},\F^{\mathrm{ind}},\FF^{\mathrm{ind}},\PP^{\mathrm{ind}}).
\]
A typical element of $\overline{\Omega}$ is denoted $\vec{\omega} = (\omega^0,\omega^1,\ldots)$, with $\omega^0 \in \Omega^{\mathrm{com}}$ and $\omega^i \in \Omega^{\mathrm{ind}}$ for $i\ge 1$. We call $\omega^0$ the \emph{common noise} and $\omega^i$ the \emph{idiosyncratic noise} of agent $i$. Define the projections 
\[
W^i(\omega^0,\omega^1,\ldots) = \omega^i,
\]
for $i=0,1,\ldots$. Finally, for $i \ge 1$, define the filtration
$\FF^i=(\F^i_t)_{t \in \T}$ of the $i^\text{th}$ agent by 
\[
\F^i_t := (W^0,W^i)^{-1}(\F^{\text{sig}}_t) := \sigma\{\{(W^0,W^i) \in C\} : C \in \F^{\text{sig}}_t\}.
\]

Define the empirical measure map $\overline{\mu}^n : \T^n \rightarrow \P(\T)$ by
\begin{align}
\overline{\mu}^n(t_1,\ldots,t_n) = \frac{1}{n}\sum_{k=1}^n\delta_{t_k}. \label{def:empirical-measure}
\end{align}
We will make use of the following common notation: given $\vec{e} = (e_1,\ldots,e_n) \in E^n$ for some set $E$, define
\begin{align*}
\vec{e}^{\,-i} = (e_1,\ldots,e_{i-1},e_{i+1},\ldots,e_n), \quad\text{and}\quad (\vec{e}^{\,-i},x) = (e_1,\ldots,e_{i-1},x,e_{i+1},\ldots,e_n),
\end{align*}
for $x \in E$ and $i=1,\ldots,n$.
To minimize the number of parentheses, we abuse notation somewhat by writing $\overline{\mu}^n(\vec{t}^{-i},s)$ in lieu of $\overline{\mu}^n((\vec{t}^{-i},s))$, when $\vec{t} \in \T^n$ and $s \in \T$.
For $\epsilon \ge 0$, we say that $\vec{\tau}=(\tau_1,\ldots,\tau_n)$ is an \emph{$\epsilon$-Nash equilibrium} if $\tau_i$ is an $\FF^i$-stopping time (defined on $\overline{\Omega}$) and 
\begin{align*}
\E\left[F\left(W^0,W^i,\overline{\mu}^n(\vec{\tau}),\tau_i\right)\right] \ge \E\left[F\left(W^0,W^i,\overline{\mu}^n\left(\vec{\tau}^{\,-i},\sigma\right),\sigma\right)\right] - \epsilon,
\end{align*}
for every alternative $\FF^i$-stopping time $\sigma$, for each $i=1,\ldots,n$.

\subsection{The mean field game}
We next define the infinite-agent counterpart of the above game, called the \emph{mean field game}, which is formulated on the product space $\Omega^{\mathrm{com}}\times\Omega^{\mathrm{ind}}$. We write $\E$ for expectation under the product measure $\PP^{\mathrm{com}} \times \PP^{\mathrm{ind}}$, and we write $W^0$ and $W^1$ for the projections onto $\Omega^{\mathrm{com}}$ and $\Omega^{\mathrm{ind}}$, respectively.

\begin{definition} \label{def:MFE}
A \emph{strong mean field equilibrium (MFE)} is a $\FF^{\mathrm{sig}}$-stopping time $\tau^*$ on $\Omega^{\mathrm{com}}\times\Omega^{\mathrm{ind}}$ satisfying
\begin{align*}
\E\left[F(W^0,W^1,\mu,\tau^*)\right] \ge \E\left[F(W^0,W^1,\mu,\tau)\right],
\end{align*}
for every alternative $\FF^{\mathrm{sig}}$-stopping time $\tau$, where
\begin{align}
\mu = \PP^{\mathrm{com}} \times \PP^{\mathrm{ind}}[\tau^*\in \cdot \, | \, W^0] \label{def:MFE-law}
\end{align}
is the regular conditional law of $\tau^*$ given $W^0$.
\end{definition}

We say \emph{strong MFE} here because later, in Definition \ref{def:weakMFE}, we will later introduce a notion of \emph{weak MFE}. 
One justification of this strong equilibrium concept is the following theorem, which explains how to use a strong MFE to construct approximate Nash equilibria for the $n$-player games. First, some assumptions are needed.
In the following, consider the topology $\sigma(\P(\T),B(\T))$ generated by the set $B(\T)$ of bounded measurable functions of $\T$; that is, $\sigma(\P(\T),B(\T))$ is the coarsest topology on $\P(\T)$ such that the map $m \mapsto \int_\T\varphi\,dm$ is continuous for every $\varphi \in B(\T)$.
Define the total variation of a signed measure $\nu$ on $\T$ by
\[
\|\nu\|_{TV} = \sup\left\{\int_\T f\,d\nu : f \in B(\T), \ \sup_{t \in \T}|f(t)| \le 1\right\}.
\]
In the following, $\Omega^{\mathrm{com}} \times \Omega^{\mathrm{ind}}$ is always equipped with the probability measure $\PP^{\mathrm{com}} \times \PP^{\mathrm{ind}}$.

\begin{assumptionA*} \label{assumption:A}
$\ $
\begin{enumerate}
\item[(A.1)] $F$ is jointly measurable, with $\P(\T)$ equipped with the $\sigma$-field generated by the maps $m \mapsto m(C)$, where $C \subset \T$ is a Borel set.\footnote{This $\sigma$-field agrees with the Borel $\sigma$-field generated by the topology of weak convergence on $\P(\T)$.
}
\item[(A.2)] For almost every $(\omega^0,\omega^1)$, the map $m \mapsto F(\omega^0,\omega^1,m,t)$ is $\sigma(\P(\T),B(\T))$-continuous, uniformly in $t$. That is, for each $m_0 \in \P(\T)$, the map
\[
m \mapsto \sup_{t \in \T}\left|F(\omega^0,\omega^1,m,t) - F(\omega^0,\omega^1,m_0,t)\right|
\]
is $\sigma(\P(\T),B(\T))$-continuous at $m_0$.
\item[(A.3)] It holds that
\begin{align}
\E\left[\sup_{m \in \P(\T)}\sup_{t \in \T}\left|F(W^0,W^1,m,t)\right|\right] < \infty. \label{def:A.5}
\end{align}
\end{enumerate}
\end{assumptionA*}

Assumption (A.2) may appear difficult to verify. On the contrary, there are two broad classes of examples it covers. First, because $\sigma(\P(\T),B(\T))$ is finer than the topology of weak convergence, replacing $\sigma(\P(\T),B(\T))$-continuity with weak continuity (i.e., continuity with respect to the topology of weak convergence) is enough. Moreover, because $[0,\infty]$ is compact, joint continuity of $F(\omega^0,\omega^1,m,t)$ in $(m,t)$ implies (A.2). The second class of examples, and indeed the one that motivates the use of the topology $\sigma(\P(\T),B(\T))$, consists of functions $F$ of the form $F(\omega^0,\omega^1,m,t) = G(\omega^0,\omega^1,m[0,t],t)$, where $G : \Omega^{\mathrm{com}} \times \Omega^{\mathrm{ind}} \times [0,1] \times \T \rightarrow \R$ is measurable. If $G=G(\omega^0,\omega^1,u,t)$ is continuous in $u$, uniformly in $t$, for each fixed $(\omega^0,\omega^1)$, then $F$ satisfies (A.2). This follows from a simple lemma, proven in Section \ref{se:proofs-supermodular}.

\begin{lemma} \label{le:glivenko-cantelli}
For each $m_0 \in \P(\T)$, the map
\[
m \mapsto \sup_{t \in \T}|m[0,t] - m_0[0,t]|
\]
is $\sigma(\P(\T),B(\T))$-continuous at $m_0$.
\end{lemma}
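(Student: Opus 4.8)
The plan is to establish the (formally stronger, but equivalent) statement that whenever a net $(m_\alpha)$ in $\P(\T)$ converges to $m_0$ in $\sigma(\P(\T),B(\T))$, one has $\sup_{t\in\T}|m_\alpha[0,t]-m_0[0,t]|\to 0$; since the value of the map at $m_0$ is $0$, this is precisely continuity at $m_0$ (and, as the estimate below is quantitative, it equally produces an explicit basic neighborhood of $m_0$ on which the map is small). The one observation that makes everything work is that, testing $m_\alpha\to m_0$ against the indicator of an arbitrary Borel set (which lies in $B(\T)$) gives $m_\alpha(A)\to m_0(A)$ for \emph{every} Borel $A\subset\T$; in particular $m_\alpha[0,s]\to m_0[0,s]$ and $m_\alpha((s,s'])\to m_0((s,s'])$ for all $s\le s'$ in $\T$. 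This setwise convergence is exactly the gain over the topology of weak convergence, and it is what lets us handle atoms of $m_0$.

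If $\T$ is discrete it is finite (a discrete compact space is finite), and then $\sigma(\P(\T),B(\T))$ is the usual topology on the finite simplex $\P(\T)$, for which the map is plainly continuous (even Lipschitz); so I may assume $\T=[0,T]$. Fix $\epsilon>0$. The core step is a P\'olya-type discretization: I would first build a finite partition $0=u_0<u_1<\dots<u_M=T$ of $\T$ with $m_0((u_{l-1},u_l))\le\epsilon$ for every $l$. This is done greedily, setting $u_l=\sup\{s\in\T:\ s\ge u_{l-1},\ m_0((u_{l-1},s))\le\epsilon\}$. Continuity of $m_0$ from above at $u_{l-1}$ forces $u_l>u_{l-1}$ whenever $u_{l-1}<T$; continuity from below gives $m_0((u_{l-1},u_l))\le\epsilon$; and for every non-terminal step one checks $m_0((u_{l-1},u_l])\ge\epsilon$, so, the intervals $(u_{l-1},u_l]$ being disjoint, at most $1/\epsilon$ non-terminal steps occur and the construction stops after finitely many steps.

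With the partition fixed, the estimate runs as follows. For $t=u_l$ there is nothing to do beyond $|m_\alpha[0,u_l]-m_0[0,u_l]|\to0$. For $t\in(u_{l-1},u_l)$ I would write
$m_\alpha[0,t]-m_0[0,t]=\big(m_\alpha[0,u_{l-1}]-m_0[0,u_{l-1}]\big)+\big(m_\alpha((u_{l-1},t])-m_0((u_{l-1},t])\big)$,
bound the first bracket by $\max_{0\le l\le M}|m_\alpha[0,u_l]-m_0[0,u_l]|$, and bound the second crudely by $m_\alpha((u_{l-1},t])+m_0((u_{l-1},t])\le m_\alpha((u_{l-1},u_l))+\epsilon$, using $(u_{l-1},t]\subset(u_{l-1},u_l)$ and $m_0((u_{l-1},u_l))\le\epsilon$. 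Since $m_\alpha((u_{l-1},u_l))\to m_0((u_{l-1},u_l))\le\epsilon$ for each of the finitely many $l$, there is a tail of the net along which $m_\alpha((u_{l-1},u_l))\le2\epsilon$ for all $l$ simultaneously; along that tail, $\sup_{t\in\T}|m_\alpha[0,t]-m_0[0,t]|\le\max_{0\le l\le M}|m_\alpha[0,u_l]-m_0[0,u_l]|+3\epsilon$. Letting $\alpha$ run, the max of finitely many vanishing quantities vanishes, so the $\limsup$ is at most $3\epsilon$; letting $\epsilon\downarrow0$ finishes the proof.

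The only genuinely delicate point is the interaction with atoms of $m_0$: the naive discretization that sandwiches $m_\alpha[0,t]$ between $m_\alpha[0,u_{l-1}]$ and $m_\alpha[0,u_l)$ leaves a residual error $m_0([u_{l-1},u_l))=m_0\{u_{l-1}\}+m_0((u_{l-1},u_l))$ whose atomic part need not be small, so one cannot make every partition-interval light. Rewriting the increment relative to $u_{l-1}$ as above absorbs the atom at $u_{l-1}$ into the convergent term $m_\alpha[0,u_{l-1}]-m_0[0,u_{l-1}]$ and leaves only the open-interval mass $m_0((u_{l-1},u_l))\le\epsilon$; this rearrangement, and the fact that it relies on setwise (not merely weak) convergence, is the heart of the matter.
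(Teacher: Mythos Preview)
Your proof is correct and follows essentially the same approach as the paper's: both construct a finite partition $0=t_0<\cdots<t_N=\sup\T$ with $m_0((t_{k},t_{k+1}))\le\epsilon$, then for $t\in(t_k,t_{k+1})$ compare $m[0,t]$ to $m[0,t_k]$ via the triangle inequality and control the residual open-interval mass using setwise convergence. Your version is somewhat more detailed---you give an explicit greedy construction of the partition and discuss the atom issue carefully---but the decomposition and the estimate are the same.
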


The following main result illustrates how a mean field equilibrium can be used to construct near-equilibria for the $n$-player games. Its proof is given in Section \ref{se:proof-converse}.

\begin{theorem} \label{th:main-converse}
Suppose assumption \textbf{A} holds.
Suppose $\tau^*$ is a mean field equilibrium, and let $\mu$ be as in \eqref{def:MFE-law}. For each $k$ define an $\FF^k$-stopping time on $\overline{\Omega}$ by
\[
\tau^k(\omega^0,\omega^1,\ldots,\omega^n) = \tau^*(\omega^0,\omega^k).
\]
Then there exist $\epsilon_n \ge 0$ with $\epsilon_n \rightarrow 0$ such that $\vec{\tau}^{\,n} = (\tau^1,\ldots,\tau^n)$ is an $\epsilon_n$-Nash equilibrium for each $n$, and moreover 
\begin{align*}
\lim_{n\rightarrow\infty}\E\left[F(W^0,W^k,\overline{\mu}^n(\vec{\tau}^{\,n}) ,\tau^k)\right] &= \E\left[F(W^0,W^1,\mu,\tau^*)\right], \text{ for each } k.
\end{align*}
\end{theorem}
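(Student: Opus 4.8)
The plan is to exploit the fact that the $n$-player game, with the prescribed stopping times $\tau^k(\vec\omega) = \tau^*(\omega^0,\omega^k)$, is a small perturbation of the mean field game, the perturbation being that the true empirical measure $\overline{\mu}^n(\vec\tau^{\,n})$ replaces the idealized conditional law $\mu$. First I would fix $i$ and an arbitrary $\FF^i$-stopping time $\sigma$ on $\overline\Omega$, and compare $\E[F(W^0,W^i,\overline{\mu}^n(\vec\tau^{\,n}),\tau_i)]$ to $\E[F(W^0,W^i,\mu,\tau^*)]$, and likewise $\E[F(W^0,W^i,\overline{\mu}^n(\vec\tau^{\,-i},\sigma),\sigma)]$ to $\E[F(W^0,W^i,\mu,\sigma)]$. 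The key probabilistic input is a conditional law of large numbers: given $W^0$, the pairs $(W^k,\tau^*(W^0,W^k))$ for $k\ge 1$ are i.i.d., so conditionally on $W^0$ the empirical measures $\frac1n\sum_{k=1}^n\delta_{\tau^*(W^0,W^k)}$ converge almost surely weakly to $\mu = \PP^{\mathrm{com}}\times\PP^{\mathrm{ind}}[\tau^*\in\cdot\,|\,W^0]$. Dropping the $i$-th term and replacing it by $\delta_\sigma$ changes the empirical measure by at most $2/n$ in total variation, hence does not affect the weak limit; so $\overline{\mu}^n(\vec\tau^{\,-i},\sigma)$ has the same almost-sure weak limit $\mu$ given $W^0$, uniformly in the choice of $\sigma$.

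With this in hand, the next step is to upgrade weak convergence of the random measures to convergence of the expected payoffs. Here (A.2) does the work: for a.e.\ $(\omega^0,\omega^1)$ the map $m\mapsto \sup_t|F(\omega^0,\omega^1,m,t)-F(\omega^0,\omega^1,\mu(\omega^0),t)|$ is $\sigma(\P(\T),B(\T))$-continuous at $\mu(\omega^0)$, hence a fortiori weakly continuous there, so along the a.s.\ convergent subsequence this supremum tends to $0$. Combined with the uniform integrability furnished by (A.3) — which bounds $\sup_{m,t}|F(W^0,W^i,m,t)|$ by an integrable random variable independent of everything else by symmetry — dominated convergence yields
\begin{align*}
\lim_{n\to\infty}\E\Big[\sup_{t\in\T}\big|F(W^0,W^i,\overline{\mu}^n(\vec\tau^{\,n}),t) - F(W^0,W^i,\mu,t)\big|\Big] = 0,
\end{align*}
and the same with $\overline{\mu}^n(\vec\tau^{\,-i},\sigma)$ in place of $\overline{\mu}^n(\vec\tau^{\,n})$, with a bound independent of $\sigma$. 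Define $\epsilon_n$ to be (twice) the left-hand side above (which by exchangeability does not depend on $i$); then $\epsilon_n\to 0$.

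It remains to assemble the estimate. For any $\FF^i$-stopping time $\sigma$ on $\overline\Omega$, the restriction argument gives that $(W^0,W^i,\sigma)$ has the same law on $\Omega^{\mathrm{com}}\times\Omega^{\mathrm{ind}}\times\T$ as $(W^0,W^1,\widetilde\sigma)$ for some $\FF^{\mathrm{sig}}$-stopping time $\widetilde\sigma$ on $\Omega^{\mathrm{com}}\times\Omega^{\mathrm{ind}}$ — indeed $\sigma$ depends only on $(W^0,W^i)$ through an $\FF^{\mathrm{sig}}$-stopping time by the very definition of $\FF^i$. Hence $\E[F(W^0,W^i,\mu,\sigma)] \le \E[F(W^0,W^1,\mu,\tau^*)]$ by the MFE optimality of $\tau^*$. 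Chaining the inequalities,
\begin{align*}
\E\big[F(W^0,W^i,\overline{\mu}^n(\vec\tau^{\,-i},\sigma),\sigma)\big] &\le \E[F(W^0,W^i,\mu,\sigma)] + \tfrac{\epsilon_n}{2} \\
&\le \E[F(W^0,W^1,\mu,\tau^*)] + \tfrac{\epsilon_n}{2} \\
&\le \E\big[F(W^0,W^i,\overline{\mu}^n(\vec\tau^{\,n}),\tau_i)\big] + \epsilon_n,
\end{align*}
which is exactly the $\epsilon_n$-Nash property; the displayed limit of payoffs follows from the $n=$ term of the same chain of estimates (with $\tau_i$ in the slot). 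The main obstacle I anticipate is the measure-theoretic bookkeeping around enlarged filtrations: justifying carefully that an $\FF^i$-stopping time on the big product space $\overline\Omega$ really does reduce to an $\FF^{\mathrm{sig}}$-stopping time on $\Omega^{\mathrm{com}}\times\Omega^{\mathrm{ind}}$ (so that MFE optimality applies to it), and that the conditional i.i.d.\ structure survives the augmentation of filtrations, requires some care — though none of it should be deep given the product structure built into the definitions.
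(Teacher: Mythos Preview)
Your overall architecture matches the paper's: define $\epsilon_n$ via the worst deviation, use the conditional i.i.d.\ structure of $(\tau^k)_{k\ge 1}$ given $W^0$ to show the empirical measure is close to $\mu$, combine with (A.2)--(A.3) to get uniform-in-$t$ $L^1$ convergence of $F$, and close with the MFE optimality of $\tau^*$ (after identifying $\FF^i$-stopping times with $\FF^{\mathrm{sig}}$-stopping times). The one substantive error is your sentence ``$\sigma(\P(\T),B(\T))$-continuous at $\mu(\omega^0)$, hence a fortiori weakly continuous there.'' The implication goes the other way: $\sigma(\P(\T),B(\T))$ is \emph{finer} than the weak topology, so weak continuity implies $\sigma(\P(\T),B(\T))$-continuity, not conversely. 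The motivating example $F(\omega^0,\omega^1,m,t)=G(\omega^0,\omega^1,m[0,t],t)$ is $\sigma(\P(\T),B(\T))$-continuous in $m$ (via Lemma~\ref{le:glivenko-cantelli}) but typically not weakly continuous, since $m\mapsto m[0,t]$ is not. So from a.s.\ \emph{weak} convergence of $\overline{\mu}^n(\vec\tau^{\,n})$ to $\mu$ you cannot deduce that the sup over $t$ tends to zero under (A.2) alone.

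The paper's fix is to prove convergence in the finer topology, but only in the form actually needed. By the strong law of large numbers applied to each of the finitely many bounded \emph{measurable} test functions defining a basic $\sigma(\P(\T),B(\T))$-neighborhood $U$ of $\mu(\omega^0)$, one has $\PP(\overline{\mu}^n(\vec\tau^{\,n})\notin U\,|\,W^0=\omega^0)\to 0$. Then, given $\delta>0$, (A.2) produces such a $U$ on which the sup over $t$ of $|F(\omega^0,\omega^1,\cdot,t)-F(\omega^0,\omega^1,\mu(\omega^0),t)|$ is below $\delta$; combining, $\sup_t|F(W^0,W^1,\overline{\mu}^n,t)-F(W^0,W^1,\mu,t)|\to 0$ in probability, and (A.3) plus dominated convergence upgrades this to $L^1$. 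The same argument, together with your total-variation bound $\|\overline{\mu}^n(\vec\tau^{\,-i},\sigma)-\overline{\mu}^n(\vec\tau^{\,n})\|_{TV}\le 2/n$, handles the perturbed measure uniformly in $\sigma$. With this correction in place your chain of inequalities goes through exactly as written.
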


\subsection{Strategic Complementarities and Existence of MFEs}

An existence result for strong MFE is available, even for discontinuous $F$, as long as a suitable \emph{complementarity} property holds, as was mentioned in the introduction. This section draws heavily on ideas from literature on games with strategic complementarities \cite{milgrom1990rationalizability,vives1990nash}, which is abundant with existence proofs based more on monotonicity than continuity. In the following, let us say that a $\P(\T)$-valued random variable $\mu$ on $\Omega^{\mathrm{com}}\times\Omega^{\mathrm{ind}}$ is an \emph{$\FF^{\mathrm{com}}$-adapted random measure} if $\mu[0,t]$ is $\F^{\mathrm{com}}_t$-measurable for every $t \in \T$.

\begin{assumptionB*}
$\ $
\begin{enumerate}
\item[(B.1)] $\FF^{\mathrm{sig}}$ is right-continuous.
\item[(B.2)] For every pair of $\FF^{\mathrm{com}}$-adapted random measures $\mu,\tilde{\mu}$ satisfying $\mu[0,t]\ge \tilde{\mu}[0,t]$ for all $t\in\T$ a.s., the process $(M_t)_{t \in \T}$ defined by
\[
M_t=F(W^0,W^1,\tilde{\mu},t) - F(W^0,W^1,\mu,t) 
\]
is a submartingale. 
\item[(B.3)] For each $m \in \P(\T)$, $t \mapsto F(W^0,W^1,m, t)$ is upper semicontinuous, almost surely.
\item[(B.4)] Conditions (A.1) and (A.3) hold.
\end{enumerate}
\end{assumptionB*}

If $\mu \le \tilde{\mu}$ in the sense of stochastic order (i.e., if $\tilde{\mu}[0,t]\le \mu[0,t]$ a.s. for each $t\in[0,T]$), and if $\tau \le \tilde{\tau}$ are stopping times, taking expectations in the submartingale property of $M_t$ in assumption (B.2) yields
\begin{equation}
\label{fo:ID}
\E[F(W^0,W^1,\tilde{\mu},\tilde{\tau})] - \E[F(W^0,W^1,\tilde{\mu},\tau)]\ge  \E[F(W^0,W^1,\mu,\tilde{\tau})] - \E[F(W^0,W^1,\mu,\tau)], 
\end{equation}
property which is usually called \emph{increasing differences}.
Intuitively, assumption (B.2) requires that for ``larger'' $\mu$ the function $F$ increases more rapidly in expectation with $t$ than it does for smaller $\mu$. These hypotheses introduce strategic complementarities in the game and recast the game of timing model as a supermodular game.
They are natural in the context of bank run models, in which the measure $\mu$ captures how early people run to the bank. Indeed, if $\tilde{\mu} \ge \mu$ in stochastic order, then under $\mu$ more people have run to the bank earlier. Under $\mu$, the reward an agent gains by waiting from $\tau$ to $\tilde{\tau} > \tau$ should not exceed the same reward under $\tilde{\mu}$. In other words, if people tend to run to the bank earlier, the ``cost of waiting'' for an investor should be greater.

While assumption \textbf{B} is all that is needed for existence, the following stronger assumption will enable a better understanding of the structure of equilibria.

\begin{assumptionC*}
$\ $
\begin{enumerate}
\item[(C.1)] $F(W^0,W^1,m,t)$ is almost surely jointly continuous in $(m,t)$, when $\P(\T)$ is endowed with the topology of weak convergence.
\item[(C.2)] Condition (A.3) holds.
\end{enumerate}
\end{assumptionC*}

\begin{theorem} \label{th:existence-B}
If assumption \textbf{B} holds, then there exists a strong MFE. If both assumptions \textbf{B} and \textbf{C} hold, then there exist strong MFEs $\tau^*$ and $\theta^*$ such that for any strong MFE $\tau$ we have $\theta^*\le \tau\le \tau^*$ a.s.  
\end{theorem}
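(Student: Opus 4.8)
The plan is to realize a strong MFE as a fixed point of a monotone best‑response map on a complete lattice and invoke the Knaster--Tarski theorem, in the spirit of the supermodular‑games literature cited in the introduction. Let $L$ be the set of $\FF^{\mathrm{com}}$‑adapted random measures $\mu$, identified with their cumulative processes $t\mapsto\mu[0,t]$ (a.s.\ nondecreasing, right‑continuous, $[0,1]$‑valued, $\FF^{\mathrm{com}}$‑adapted), partially ordered by the stochastic order $\preceq$ of the statement ($\mu\preceq\tilde\mu$ iff $\tilde\mu[0,t]\le\mu[0,t]$ a.s.\ for all $t$). The first step is to check that $(L,\preceq)$ is a complete lattice: given a family, close it under pairwise maxima (resp.\ minima), which stays in $L$ since the maximum of two cumulative processes is again one, then take the cumulative‑process‑wise essential supremum (resp.\ infimum) and pass to a right‑continuous modification; adaptedness and right‑continuity of the limit use the usual conditions on $\FF^{\mathrm{com}}$. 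The top of $L$ is $\delta_{T}$ and the bottom is $\delta_0$.

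Next I build the best‑response map. Fix $\mu\in L$. Since $F(W^0,W^1,\mu,t)$ need not be $\F^{\mathrm{sig}}_t$‑measurable, first replace it by its optional projection onto $\FF^{\mathrm{sig}}$; by the projection theorem this leaves $\E[F(W^0,W^1,\mu,\tau)]$ unchanged over $\FF^{\mathrm{sig}}$‑stopping times $\tau$. Then, using integrability (B.4), upper semicontinuity in $t$ (B.3), right‑continuity of $\FF^{\mathrm{sig}}$ (B.1), and compactness of $\T$, Snell‑envelope theory for upper semicontinuous rewards yields a smallest optimal stopping time $\underline\tau_\mu$ and a largest optimal stopping time $\overline\tau_\mu$. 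Define $\Phi_+(\mu):=\PP^{\mathrm{com}}\times\PP^{\mathrm{ind}}[\overline\tau_\mu\in\cdot\,|\,W^0]$ and $\Phi_-(\mu):=\PP^{\mathrm{com}}\times\PP^{\mathrm{ind}}[\underline\tau_\mu\in\cdot\,|\,W^0]$. Each lands in $L$: since $\{\overline\tau_\mu\le t\}\in\F^{\mathrm{sig}}_t\subset\F^{\mathrm{com}}_t\otimes\F^{\mathrm{ind}}_t$ and the underlying measure is a product, a Fubini argument shows $\PP[\overline\tau_\mu\le t\,|\,W^0]$ is $\F^{\mathrm{com}}_t$‑measurable. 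By construction, $\mu\in L$ is a fixed point of $\Phi_+$ (equivalently of $\Phi_-$) if and only if the corresponding extremal optimal stopping time is a strong MFE whose conditional law given $W^0$ is $\mu$, and every strong MFE arises in this way for some choice of optimal stopping time.

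The core step is monotonicity: $\mu\preceq\tilde\mu$ implies $\overline\tau_\mu\le\overline\tau_{\tilde\mu}$ and $\underline\tau_\mu\le\underline\tau_{\tilde\mu}$ almost surely, and hence $\Phi_\pm(\mu)\preceq\Phi_\pm(\tilde\mu)$, since a pathwise inequality of stopping times passes to the stochastic order of their conditional laws. Indeed, $\mu\preceq\tilde\mu$ means exactly $\mu[0,t]\ge\tilde\mu[0,t]$, so by (B.2) the process $M_t=F(W^0,W^1,\tilde\mu,t)-F(W^0,W^1,\mu,t)$ is a submartingale, a property preserved by the optional projection onto $\FF^{\mathrm{sig}}$ via the tower rule since $\F^{\mathrm{sig}}_t\subset\F^{\mathrm{com}}_t\otimes\F^{\mathrm{ind}}_t$. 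Writing the projected reward for $\tilde\mu$ as that for $\mu$ plus the $\FF^{\mathrm{sig}}$‑submartingale $\widehat M$, and lower‑bounding the Snell envelope $Z^{\tilde\mu}$ by testing it with a $\mu$‑optimal stopping time together with optional sampling of $\widehat M$, one obtains $Z^{\tilde\mu}_t-\widehat F^{\tilde\mu}_t\ge Z^\mu_t-\widehat F^\mu_t\ge 0$; hence the $\tilde\mu$‑stopping region is contained in the $\mu$‑stopping region (equivalently, the continuation region grows as one increases the measure), so the first entry time and the last optimal time both increase from $\mu$ to $\tilde\mu$. With $\Phi_+$ and $\Phi_-$ monotone self‑maps of the complete lattice $L$, Knaster--Tarski produces a greatest fixed point $\mu^*$ of $\Phi_+$ and a least fixed point $\mu_*$ of $\Phi_-$, each yielding a strong MFE; this proves the existence assertion.

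For the second assertion, assume in addition (C): joint continuity of $F$ in $(m,t)$ makes the projected rewards and their Snell envelopes continuous, so the extremal optimal stopping times are genuinely attained and optimal and the comparison above is sharp. Put $\tau^*:=\overline\tau_{\mu^*}$ and $\theta^*:=\underline\tau_{\mu_*}$, which are strong MFEs by the previous step. Let $\tau$ be any strong MFE and $\nu:=\PP^{\mathrm{com}}\times\PP^{\mathrm{ind}}[\tau\in\cdot\,|\,W^0]$; since $\tau$ is optimal for $\nu$, $\underline\tau_\nu\le\tau\le\overline\tau_\nu$ a.s., and taking conditional laws gives $\Phi_-(\nu)\preceq\nu\preceq\Phi_+(\nu)$, so $\nu$ lies in the defining set of the greatest fixed point of $\Phi_+$ and of the least fixed point of $\Phi_-$, whence $\mu_*\preceq\nu\preceq\mu^*$. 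Pathwise monotonicity of $\mu\mapsto\overline\tau_\mu$ and $\mu\mapsto\underline\tau_\mu$ then yields $\theta^*=\underline\tau_{\mu_*}\le\underline\tau_\nu\le\tau\le\overline\tau_\nu\le\overline\tau_{\mu^*}=\tau^*$ a.s., as claimed. The step I expect to be the main obstacle is making this Snell‑envelope comparison fully rigorous: controlling the behavior at the boundary of the stopping region (in particular that the largest optimal stopping time is attained and optimal) and carrying out a measurable selection of extremal optimal stopping times so that $\Phi_\pm$ is genuinely a self‑map of $L$; this is where (B.1) and, for the extremal‑equilibrium statement, (C) do real work. A secondary technical point is the completeness of $L$ (adaptedness and right‑continuity of cumulative‑process‑wise essential suprema) and the regularity of the optional projection needed to run optimal stopping theory in the partial‑information setting.
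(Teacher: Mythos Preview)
Your approach is correct in spirit and shares the paper's core idea---Tarski's fixed point theorem applied to a monotone best-response map, with monotonicity coming from the increasing-differences property (B.2)---but the execution differs in several respects worth noting.

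The paper works on the complete lattice $\SS$ of $\FF^{\mathrm{sig}}$-stopping times rather than your lattice $L$ of adapted random measures, and defines the best-response correspondence $\Phi(\mu)=\arg\max_{\tau\in\SS}J(\mu,\tau)$ with $J(\mu,\tau)=\E[F(W^0,W^1,\mu,\tau)]$. Working directly with expected values, the paper never needs your optional-projection step: $J$ is trivially supermodular in $\tau$ and has increasing differences in $(\mu,\tau)$ by optional sampling of (B.2), so Topkis's monotonicity theorem immediately yields that $\Phi$ is increasing in the strong set order, and a result of Milgrom--Roberts gives that $\Phi(\mu)$ is a nonempty complete sublattice (hence has a max $\phi^*(\mu)$ and min $\phi_*(\mu)$). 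Tarski is then applied to the monotone self-map $\phi^*\circ\psi$ of $\SS$, where $\psi(\tau)=\mathrm{Law}(\tau\,|\,W^0)$. This abstract route sidesteps entirely the Snell-envelope regularity and measurable-selection issues you flag as obstacles.

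Your Snell-envelope comparison $Z^{\tilde\mu}_t-\widehat F^{\tilde\mu}_t\ge Z^\mu_t-\widehat F^\mu_t$ is correct and does give containment of stopping regions, hence monotonicity of the \emph{smallest} optimal time $\underline\tau_\mu$. But the claim that the \emph{largest} optimal time $\overline\tau_\mu$ is monotone does not follow from stopping-region containment alone; it requires either a separate argument via the Doob--Meyer decomposition of the Snell envelope or---more simply---the Topkis/strong-set-order machinery the paper invokes, which handles both extremes at once.

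For the second assertion, the paper instead iterates $\tau_{n+1}=\phi^*\circ\psi(\tau_n)$ from $\tau_0=\sup\T$ (and symmetrically from $0$) and uses assumption \textbf{C} to pass to the limit in $J$. Your argument via Tarski's characterization of the greatest and least fixed points is a legitimate alternative and arguably more direct; note that if you use the paper's Topkis route to get existence and monotonicity of $\phi^*,\phi_*$ under \textbf{B} alone, much of your extremal-equilibrium argument goes through without invoking \textbf{C} for continuity of the Snell envelope.
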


Some examples of assumption \textbf{B} are as follows. First,  suppose that for every $t\le t'$, every $(\omega^0,\omega^1)$, and every $m,m' \in \P(\T)$ satisfying $m \le m'$ in stochastic order (meaning $m[0,s] \ge m'[0,s]$ for every $s$), we have
\begin{align*}
F(\omega^0,\omega^1,m',t') - F(\omega^0,\omega^1,m',t) \ge F(\omega^0,\omega^1,m,t') - F(\omega^0,\omega^1,m,t).
\end{align*}
Then the submartingale part of assumption \textbf{B} holds trivially, as the process $M$ is nondecreasing.
The following proposition and remark show how to verify assumption \textbf{B} for a large class of examples based on diffusion processes.

\begin{proposition} \label{pr:diffusion}
Suppose $\T = [0,T]$, and assume the space $\Omega^{\mathrm{com}} \times \Omega^{\mathrm{ind}}$ supports a continuous It\^o diffusion $X=(X_t)_{t \in [0,T]}$ with infinitesimal generator $\L$ defined on all smooth functions $\varphi$ of compact support by
\begin{align*}
\L  f(x) = b(x)\cdot\nabla f(x) + \frac{1}{2}\mathrm{Tr}[a(x)\nabla^2f(x)],
\end{align*}
where $b$ and $a$ are measurable functions with values in $\R^d$ and the set of positive semidefinite $d \times d$ matrices, respectively.
Assume $F$ is of the form $F(\omega^0,\omega^1,m,t)=f(X_t(\omega^0,\omega^1),\varphi*m(t),t)$, where $f$ is bounded and
\[
\varphi*m(t) = \int_{[0,T]}\varphi(t-s)\,m(ds).
\]
Moreover, assume $f : \R^d \times \R \times [-T,T] \ni (x,y,t)\mapsto f(x,y,t)\in \R$ has two bounded continuous derivatives in $x$ and one in both $y$ and $t$, and $\varphi : [0,T] \rightarrow \R$ is continuous.
Suppose one of the following holds:
\begin{enumerate}[(i)]
\item $\varphi$ is nondecreasing and convex, $\partial_yf \ge 0$, and also $\L_xf + \partial_tf$ and $\partial_yf$ are nondecreasing in $y$ for each fixed $(x,t)$, where $\L_x$ denotes the action of $\L$ on the $x$ variable.
\item $\varphi$ is nonincreasing and convex, $\partial_yf \le 0$, and also $\L_xf + \partial_tf$ and $\partial_yf$ are nonincreasing in $y$ for each fixed $(x,t)$.
\end{enumerate}
Assume finally that
\begin{align}
\E\int_0^T\sup_{y \in \R}\left|a(X_t)\nabla_xf(X_t,y,t)\right|^2dt < \infty. \label{def:ito-integrability}
\end{align}
Then assumptions (B.2-3) and (C.1) hold.
\end{proposition}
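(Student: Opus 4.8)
The plan is to dispatch (C.1) and (B.3), which are soft continuity statements, and then to spend the real effort on (B.2), which will come from Itô's formula applied to $F$ followed by a sign analysis of the resulting drift. For (C.1) and (B.3): the paths of $X$ are continuous, so $t \mapsto X_t$ is a.s.\ continuous; and $\varphi$, being continuous on a compact interval, is uniformly continuous, so for $(m_k,t_k) \to (m,t)$ with $m_k \to m$ weakly,
\[
|\varphi*m_k(t_k) - \varphi*m(t)| \le \sup_{s}|\varphi(t_k-s) - \varphi(t-s)| + \Big|\int \varphi(t-s)\,(m_k - m)(ds)\Big| \longrightarrow 0,
\]
the first term by uniform continuity and the second because $\varphi(t-\cdot)$ is a fixed bounded continuous function. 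Thus $(m,t) \mapsto \varphi*m(t)$ is jointly continuous for the weak topology, and composing with the continuous $f$ and with $t \mapsto X_t$ yields a.s.\ joint continuity of $F(W^0,W^1,\cdot,\cdot)$; this is (C.1), and (B.3) is the special case of continuity in $t$, hence of upper semicontinuity.

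Now (B.2). Write $Y^\nu_t := \varphi*\nu(t)$ for a random measure $\nu$. The preliminary point is that $t \mapsto Y^\nu_t$ is absolutely continuous with $\dot Y^\nu_t = \int_{[0,T]} \varphi'(t-s)\,\nu(ds)$ for a.e.\ $t$: a continuous convex function on a compact interval has finite total variation, so $\varphi' \in L^1$, and then Fubini gives $Y^\nu_t - Y^\nu_u = \int_u^t \big(\int \varphi'(r-s)\,\nu(ds)\big)\,dr$. In particular $Y^\nu$ is a continuous finite-variation process, and since $f$ has two bounded continuous derivatives in $x$ and one in each of $y,t$, Itô's formula applies to $f(X_t,Y^\nu_t,t)$: denoting the semimartingale decomposition of $X$ by $dX_t = b(X_t)\,dt + dN_t$, with $N$ the local-martingale part and $d\langle N\rangle_t = a(X_t)\,dt$,
\[
df(X_t,Y^\nu_t,t) = \big[\L_xf + \partial_tf + \partial_yf\,\dot Y^\nu_t\big](X_t,Y^\nu_t,t)\,dt + \nabla_xf(X_t,Y^\nu_t,t)^\top dN_t.
\]
Subtracting the identities for $\nu=\mu$ and $\nu=\tilde\mu$ exhibits $M_t = F(W^0,W^1,\tilde\mu,t) - F(W^0,W^1,\mu,t)$ as its (bounded, hence integrable, since $f$ is bounded) initial value, plus a stochastic integral, plus a finite-variation drift. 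The hypothesis \eqref{def:ito-integrability}, combined with boundedness of $\nabla_xf$ and with $T < \infty$, makes the stochastic integral a genuine square-integrable martingale; so it remains only to prove that the drift density
\[
D_t = \big[\L_xf + \partial_tf\big](X_t,Y^{\tilde\mu}_t,t) - \big[\L_xf + \partial_tf\big](X_t,Y^{\mu}_t,t) + \partial_yf(X_t,Y^{\tilde\mu}_t,t)\,\dot Y^{\tilde\mu}_t - \partial_yf(X_t,Y^{\mu}_t,t)\,\dot Y^{\mu}_t
\]
is $\ge 0$ for a.e.\ $t$, a.s.

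The sign analysis rests on two elementary comparisons between $\mu$ and $\tilde\mu$, both consequences of $\mu[0,t] \ge \tilde\mu[0,t]$, that is, of $\mu$ being stochastically dominated by $\tilde\mu$. First, monotonicity of $\varphi$ controls the levels: under (i), $s \mapsto \varphi(t-s)$ is nonincreasing, so $Y^\mu_t = \int \varphi(t-s)\,\mu(ds) \ge \int \varphi(t-s)\,\tilde\mu(ds) = Y^{\tilde\mu}_t$ for every $t$, and the reverse inequality holds under (ii). Second, convexity of $\varphi$ controls the time-derivatives: $\varphi'$ is nondecreasing, so $s \mapsto \varphi'(t-s)$ is nonincreasing, whence $\dot Y^\mu_t \ge \dot Y^{\tilde\mu}_t$; the sign of $\varphi'$ additionally forces $\dot Y^\mu_t, \dot Y^{\tilde\mu}_t \ge 0$ under (i) (and $\le 0$ under (ii)). Plugging these orderings into $D_t$ and invoking the monotonicity of $\L_xf + \partial_tf$ and of $\partial_yf$ in $y$, together with the sign of $\partial_yf$, a short case analysis of the two brackets making up $D_t$ gives $D_t \ge 0$; hence the drift is nondecreasing and $M$ is a submartingale. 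Case (ii) is handled identically with all monotonicity and sign directions reversed, which leaves $D_t$ — and the conclusion — unchanged.

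I expect the main obstacle to be twofold. The more technical issue is justifying the Itô expansion when $\varphi$ is only continuous and convex: one must establish the absolute continuity of $\varphi*\nu$ with the displayed derivative and the a.s.\ integrability of $\int_0^T \partial_yf(X_t,Y^\nu_t,t)\,\dot Y^\nu_t\,dt$, which can alternatively be done by smoothing $\varphi$ to convex functions $\varphi_k \to \varphi$ uniformly and passing to the limit in both the level and the derivative — with care to preserve convexity along the approximation. The more conceptual issue is the bookkeeping of the sign analysis: one has to verify that the generator contribution $[\L_xf + \partial_tf]$ and the convolution-derivative contribution $\partial_yf\,\dot Y^\nu$ to $D_t$ are \emph{simultaneously} of the right sign under the full bundle of hypotheses in (i) (respectively (ii)), which is precisely where all of ``$\varphi$ convex'', ``$\varphi$ monotone'', ``$\partial_yf$ signed and monotone in $y$'', and ``$\L_xf + \partial_tf$ monotone in $y$'' enter.
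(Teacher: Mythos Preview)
Your approach mirrors the paper's exactly: dismiss (C.1) and (B.3) as routine continuity, apply It\^o's formula, use \eqref{def:ito-integrability} to make the stochastic integral a true martingale, and reduce (B.2) to showing the drift density $D_t \ge 0$. The paper is terser on the first two points and on the absolute continuity of $t\mapsto\varphi*\nu(t)$, where your extra care is welcome.

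However, the ``short case analysis'' you defer does not in fact deliver $D_t \ge 0$. Under (i), with $\mu \le_{\mathrm{st}} \tilde\mu$, you correctly obtain $Y^\mu_t \ge Y^{\tilde\mu}_t$ and $\dot Y^\mu_t \ge \dot Y^{\tilde\mu}_t \ge 0$: since $\varphi$ is nondecreasing, $s\mapsto\varphi(t-s)$ is nonincreasing, and stochastic dominance flips the integral inequality. But then, because $\L_xf+\partial_tf$ is \emph{nondecreasing} in $y$ and $Y^{\tilde\mu}_t \le Y^\mu_t$, the first bracket
\[
[\L_xf+\partial_tf](X_t,Y^{\tilde\mu}_t,t)-[\L_xf+\partial_tf](X_t,Y^{\mu}_t,t)
\]
is $\le 0$, not $\ge 0$; and with $\partial_yf\ge 0$ nondecreasing in $y$ together with $0\le\dot Y^{\tilde\mu}_t\le\dot Y^{\mu}_t$, the second bracket is likewise $\le 0$. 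So your own orderings force $D_t\le 0$, i.e.\ $M$ is a \emph{super}martingale. The paper's proof asserts the opposite ordering $\varphi*\tilde m \ge \varphi*m$ for nondecreasing $\varphi$, which is what makes its ``straightforward to check'' go through --- but that assertion is incorrect, as your computation (and the test case $\varphi(u)=u$, giving $\varphi*m(t)=t-\int s\,m(ds)$) confirm. The sign step is therefore a genuine gap in both arguments; you should revisit the monotonicity directions in (i)--(ii), or the sign convention in (B.2), rather than wave past the case analysis.
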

\begin{proof}
The only nontrivial claim is that the submartingale property (B.2) holds. To check this, fix two $\FF^{\mathrm{com}}$-adapted random measures $\mu$ and $\widetilde{\mu}$ satisfying $\mu[0,t] \ge \tilde{\mu}[0,t]$ a.s. for every $t$. By It\^o's formula,
\begin{align*}
df(X_t,\varphi*\mu(t),t) &= \left\{\L_xf(X_t,\varphi*\mu(t),t) + \partial_tf(X_t,\varphi*\mu(t),t) + \partial_yf(X_t,\varphi*\mu(t),t)\,\varphi'*\mu(t)\right\}dt \\
	&\quad\quad\quad +  \nabla_xf(X_t,\varphi*\mu(t),t) \cdot a(X_t)dB_t,
\end{align*}
where $B=(B_t)_{t \in [0,T]}$ is a standard Brownian motion (defined perhaps on an extension of the probability space).
The assumption \eqref{def:ito-integrability} implies that the $dB_t$ term is a martingale. To  show that $f(B_t,\varphi*\mu(t),t) - f(B_t,\varphi*\widetilde{\mu}(t),t)$ is a submartingale, it suffices to check that its $dt$ term is always nonnegative.
If $\mu \le \widetilde{\mu}$ are as in assumption \textbf{B}, then the $dt$ term of $df(B_t,\varphi*\mu(t),t) - df(B_t,\varphi*\widetilde{\mu}(t),t)$ is precisely
\begin{align*}
&\L_xf(B_t,\varphi*\widetilde{\mu}(t),t) - \L_xf(B_t,\varphi*\mu(t),t) \\
+ &\partial_tf(B_t,\varphi*\widetilde{\mu}(t),t) - \partial_tf(B_t,\varphi*\mu(t),t) \\
+ &\partial_yf(B_t,\varphi*\widetilde{\mu}(t),t)\,\varphi'*\widetilde{\mu}(t) - \partial_yf(B_t,\varphi*\mu(t),t)\,\varphi'*\mu(t).
\end{align*}
Now note that if $m \le \tilde{m}$ in stochastic order then $\int g\,dm \le \int g\,d\tilde{m}$ for every nondecreasing function $g$, and in particular if $\varphi$ is nondecreasing (resp. nonincreasing) and convex then $\varphi * \tilde{m} \ge \varphi * m$ (resp. $\le$) and $\varphi' * \tilde{m} \ge \varphi' * m$ pointwise. With this in mind, it is straighforward to check that either set of assumptions ensures that the above quantity is nonnegative.
\end{proof}

The assumption \eqref{def:ito-integrability} is not very restrictive; it holds as soon as $\nabla_xf$ and $a$ are bounded, or more generally under linear growth assumptions and suitable integrability for the initial state $X_0$.
The conditions (i-ii) are more restrictive, and
the following simple result illustrates more broadly the limitations of assumption (B.2) in handling a very natural form of mean field interaction.
In particular, Proposition \ref{pr:impossibility} suggests that our bank run model cannot satisfy assumption (B.2) because of the dependence of $F(\omega^0,\omega^1,m,t)$ on $m[0,t]$.

\begin{proposition} \label{pr:impossibility}
Suppose $\T$ is continuous, and suppose $F(\omega^0,\omega^1,m,t) = G(m[0,t])$ for some continuous $G : [0,1] \rightarrow \R$ which we assume is differentiable on $(0,1)$. If $F$ satisfies assumption (B.2), then $G$ is constant.
\end{proposition}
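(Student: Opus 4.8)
The plan is to test (B.2) against \emph{deterministic} measures, which are automatically $\FF^{\mathrm{com}}$-adapted random measures, and to observe that for deterministic data the submartingale requirement collapses to plain monotonicity in $t$. Concretely, for any pair of deterministic $\mu,\widetilde\mu \in \P(\T)$ with $\mu[0,t] \ge \widetilde\mu[0,t]$ for all $t$, the process $M_t = G(\widetilde\mu[0,t]) - G(\mu[0,t])$ is deterministic and bounded (as $G$ is continuous on the compact $[0,1]$); since a deterministic submartingale is nondecreasing, (B.2) forces $t \mapsto G(\widetilde\mu[0,t]) - G(\mu[0,t])$ to be nondecreasing on $\T$ for every such admissible pair.

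Next I would feed in a family of measures, each supported on at most two points. Fix reals $0 \le b < d \le 1$ and a time $s \in \T$ with $0 < s < T$, and set $\widetilde\mu = b\,\delta_s + (1-b)\,\delta_T$ and $\mu = d\,\delta_s + (1-d)\,\delta_T$. One checks immediately that $\mu[0,t] \ge \widetilde\mu[0,t]$ for every $t$ (both vanish for $t<s$, both equal $d \ge b$ on $[s,T)$, and both equal $1$ at $t = T$), so the associated process $M$ must be nondecreasing. But $M_t = G(0)-G(0) = 0$ for $t<s$, $M_t = G(b) - G(d)$ for $s \le t < T$, and $M_T = G(1) - G(1) = 0$. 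Monotonicity of $M$ across the upward jump at $s$ forces $G(b) - G(d) \ge 0$, while monotonicity across the jump at $T$ forces $G(b) - G(d) \le 0$; hence $G(b) = G(d)$. Since $b < d$ were arbitrary in $[0,1]$, taking $b = 0$ gives $G(d) = G(0)$ for every $d$, i.e. $G$ is constant.

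I do not expect a real obstacle here: the one point requiring care is the reduction of the submartingale hypothesis in (B.2) to ordinary monotonicity, which rests on the admissibility of deterministic random measures and on the elementary fact that a deterministic submartingale is nondecreasing; after that, everything is an explicit computation with atomic measures. (In fact the argument uses neither the continuity nor the differentiability of $G$ in an essential way — continuity enters only to make $M$ bounded, and could be sidestepped by truncating $G$. A more hands-on alternative, perturbing $\mu$ by an infinitesimal mass and extracting a sign constraint on $G'$ from the increasing-differences inequality \eqref{fo:ID}, would make genuine use of differentiability but is strictly more cumbersome, so I would not pursue it.)
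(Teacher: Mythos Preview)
Your argument is correct and in fact cleaner than the paper's. Both proofs begin with the same reduction: since deterministic measures are trivially $\FF^{\mathrm{com}}$-adapted, (B.2) forces $t\mapsto G(\widetilde\mu[0,t])-G(\mu[0,t])$ to be nondecreasing whenever $\mu[0,\cdot]\ge\widetilde\mu[0,\cdot]$. From there the paper restricts to absolutely continuous measures, differentiates the monotonicity inequality to obtain $G'(F_2(t))f_2(t)\ge G'(F_1(t))f_1(t)$, and then exploits the freedom in choosing densities at a single point to pin down $G'(u)\ge 0$ and $G'(u)\le 0$ separately. Your two-atom test measures read off $G(b)=G(d)$ directly from the jump structure of $M$, with no limiting argument and no use of differentiability; as you observe, this actually proves a stronger statement (differentiability of $G$ is irrelevant) with less machinery. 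The paper's route is the ``perturbing densities'' alternative you mention and decline to pursue.

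One cosmetic slip: the phrase ``both equal $d\ge b$ on $[s,T)$'' should say that $\mu[0,t]=d$ and $\widetilde\mu[0,t]=b$ there, with $d\ge b$; the conclusion you draw from it is unaffected.
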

\begin{proof}
For $m,\widetilde{m} \in \P([0,T])$ with $m \le \widetilde{m}$, assumption (B.2) implies that the deterministic process $G(\widetilde{m}[0,t]) - G(m[0,t])$ is a submartingale, which means simply that it is nondecreasing. In other words, for $0 \le s \le t \le T$,
\begin{align*}
G(\widetilde{m}[0,t]) - G(\widetilde{m}[0,s]) \ge G(m[0,t]) - G(m[0,s]).
\end{align*}
Dividing by $t-s$ and taking limits, we find
\[
G'(F_2(t))f_2(t) \ge G'(F_1(t))f_1(t),
\]
assuming $F_1(t)=m[0,t]$ and $F_2(t)=\widetilde{m}[0,t]$ have derivatives $f_1$ and $f_2$. The point is that stochastic dominance is not sensitive to changes in density. Given $u \in (0,1)$, there exist $m \le \widetilde{m}$ and $t \in [0,T]$ such that $F_2(t)=u$ while $f_1(t)=0$ and $f_2(t)=1$, which implies $G'(u) \ge 0$. On the other hand, given $u \in (0,1)$, there exist $m \le \widetilde{m}$ and $t \in [0,T]$ such that $F_1(t)=u$ while $f_1(t)=1$ and $f_2(t)=0$, which implies $G'(u) \le 0$. Thus $G' \equiv 0$ on $(0,1)$.
\end{proof}


\subsection{An example} \label{se:nutz}
The recent model of Nutz \cite{Nutz}, or at least many specializations thereof, can be shown to satisfy our assumption \textbf{A}. The explicit computations of equilibria in \cite{Nutz} can be used in tandem with Theorem \ref{th:main-converse} to construct $n$-player approximate equilibria.

We describe only a simple case of this model, from \cite[Section 5.1]{Nutz}. Let $\T = [0,\infty]$, and suppose $\Omega^{\mathrm{com}} = \Omega^{\mathrm{ind}} = D_{\uparrow}$ is the space of nondecreasing right-continuous real-valued functions on $[0,\infty)$. Note that for any $f \in D_{\uparrow}$ the limit $f(\infty) = \lim_{t \rightarrow\infty}f(t)$ exists in $\R \cup \{\infty\}$. Constants $c > 0$ and $r > 0$ are given, and the objective function is
\begin{align*}
F(\omega^0,\omega^1,m,t) = \exp\left(\int_0^t\left(r - \omega^0(s) - \omega^1(s) - cm[0,s]\right)ds\right).
\end{align*}
The process $\gamma_t(\omega^0,\omega^1) = \omega^0(t) - \omega^1(t) - cm[0,t]$ can be interpreted as the agent's perception of the rate of bank failure. This perceived rate changes over time, depending on a common factor $\omega^0$ and an independent factor $\omega^1$, as well as the fraction of agents who have already run to the bank.
In fact, this is not the primitive form of the objective function given in \cite{Nutz} but is instead derived in Lemma 2.1 therein (more precisely, equation (2.3)).

It is straightforward to check that assumptions (A.1) holds for this example, and we may use Lemma \ref{le:glivenko-cantelli} to check that (A.2) holds as well. Assumption (A.3) holds as long as
\[
\E\left[\exp\left(\sup_{t \ge 0}\int_0^t\left(r - W^0(s) - W^1(s)\right)ds\right)\right] < \infty.
\]
On the other hand, it appears that assumption (B.2) fails for this class of models in most cases. However, the arguments of \cite{Nutz} lead to explicit computations of MFE when agents have access to enough information, namely when $W^0$ and $W^0 + W^1$ are both adapted to $\FF^{\mathrm{sig}}$. Theorem \ref{th:main-converse} can then be used to construct explicit $n$-player approximate equilibria.

\section{Beyond complementarities: Weak equilibria}
\label{se:weakEQ}

This section explains how to move past the restrictive assumptions of complementarities by deriving an existence result and a limit theorem under the modest continuity assumptions on the objective function.
Our time set is now a finite interval $\T = [0,T]$, $T > 0$. 
Let $\C = C([0,T])$ denote the space of continuous real-valued functions on $[0,T]$, endowed with the supremum norm. For a Polish space $E$, we always write $\P(E)$ for the space of Borel probability measures on $E$, endowed with the topology of weak convergence.

For the rest of this section we specify
\[
\Omega^{\mathrm{com}} = \Omega^{\mathrm{ind}} = \C,
\]
The common noise and independent noise will now both be one-dimensional standard Brownian motions, for the sake of simplicity. This could be generalized in various directions, most obviously by making these Brownian motions multi-dimensional, and this would not alter the analysis.
Let us write $\W$ for the Wiener measure on $\C$, and specialize the setup of Section \ref{se:results} by setting
\[
\PP^{\mathrm{com}} = \PP^{\mathrm{ind}} = \W.
\]
Write $B=(B_t)_{t \in [0,T]}$ and $W=(W_t)_{t \in [0,T]}$ for the canonical processes on $\C^2$, and let $\FF^B=(\F^B_t)_{t \in [0,T]}$ and $\FF^W=(\F^W_t)_{t \in [0,T]}$ denote their natural (raw) filtrations.
The objective function is now a function $F : \C^2 \times \P([0,T])\times [0,T] \rightarrow \R$.
Note that the full information version of the bank run model of Section \ref{se:runs} fits into this specialized setup; see Example \ref{ex:bankruns}.

The equilibrium concept for the $n$-player game is as in Section \ref{se:nplayergame}, but now with full information: Given independent Wiener processes $B$ and $(W^i)_{i=1}^n$, agent $i$ chooses a random time $\tau^i$, which is required to be a stopping time relative to the full filtration $\FF^{B,W^1,\ldots,W^n}$ generated by $(B,W^1,\ldots,W^n)$, but we will not spell out the details.
Recall that for $\epsilon \ge 0$ we say that $\vec{\tau}=(\tau_1,\ldots,\tau_n)$ is an \emph{$\epsilon$-Nash equilibrium} if $\tau_i$ is an $\FF^{B,W^1,\ldots,W^n}$-stopping time (with values in $[0,T]$) and if 
\begin{align*}
\E\left[F\left(B,W^i,\overline{\mu}^n(\vec{\tau}),\tau_i\right)\right] \ge \E\left[F\left(B,W^i,\overline{\mu}^n\left(\vec{\tau}^{\,-i},\sigma\right),\sigma\right)\right] - \epsilon, 
\end{align*}
for every alternative $\FF^{B,W^1,\ldots,W^n}$-stopping time $\sigma$, for each $i=1,\ldots,n$. Unfortunately, our proof techniques seem to be restricted to this full information case; an earlier version of this paper contained partial-information analogs of the following results, but there was a flaw in the proof.

We are interested in describing the limiting behavior of Nash equilibria, as $n\rightarrow\infty$, in addition to the converse construction of Theorem \ref{th:converselimit}. To this end, we introduce notions of strong and weak equilibria in analogy with strong and weak solutions of stochastic differential equations. The strong equilibrium is exactly as in Definition \ref{def:MFE}, but with full information:

\begin{definition} \label{def:strongMFE}
A \emph{strong mean field equilibrium (MFE)} is a $\FF^{B,W}$-stopping time $\tau^*$ defined on $\C^2$, equipped with the Wiener measure $\W^2$, satisfying
\begin{align*}
\E\left[F(B,W,\mu,\tau^*)\right] \ge \E\left[F(B,W,\mu,\tau)\right],
\end{align*}
for every $\FF^{B,W}$-stopping time $\tau$, where $\mu = \W^2[\tau^*\in \cdot | B]$ is the conditional law of $\tau^*$ given $B$.
\end{definition}

The definition of a weak MFE requires care. Because we will work heavily with weak limits, we must prepare for some loss of measurability, in light of the following basic fact of weak convergence:  if $(Z,Y_n)$ are random variables converging weakly to $(Z,Y)$, and if $Y_n$ is $Z$-measurable for each $n$, then there is absolutely no reason to expect that $Y$ is $Z$-measurable in the limit, despite the fact that $Z$ does not depend on $n$. For this reason, we define a notion of weak MFE in which $\mu$ is not required to be $B$-measurable, and $\tau$ may be a \emph{randomized stopping time}, in a sense made precise below. In analogy with the definition of weak solutions for stochastic differential equations, we base the definition of weak solution on the properties of the joint distribution of $(B,W,\mu,\tau)$. In fact, for reasons which will become clear later, it is convenient to include more information by considering not only the conditional law of $\tau$ but rather the joint conditional law of $(W,\tau)$. Hence, we work with the canonical space
\begin{align}
\Omega := \C^2 \times \P(\C \times [0,T]) \times [0,T], \label{def:Omega}
\end{align}
and let $(B,W,\mu,\tau)$ denote the canonical process given by the natural projections, $(B,W) : \Omega \rightarrow \C^2$, $\mu : \Omega \rightarrow \P(\C \times [0,T])$, and $\tau : \Omega \rightarrow [0,T]$. Because we will work with a number of canonical filtrations on this space, we introduce the following notation which we shall use systematically in the sequel. The continuous processes $B$ and $W$ each generate filtrations $\FF^B=(\F^B)_{t \in [0,T]}$ and $\FF^W$, respectively, defined in the natural way. The random time $\tau$ generates the raw filtration
\[
\F^\tau_t = \sigma\{\tau \wedge t\}.
\]
The filtration generated by multiple processes is denoted, for instance, by $\FF^{B,W} := \FF^B \vee \FF^W$, or $\F^{B,W}_t = \sigma(\F^B_t \cup \F^W_t)$. 
We use the same notation $\FF^{W,\tau}$ not only for the filtration $\FF^W \vee \FF^\tau$ defined on $\Omega$, but also for the filtration generated on $\C \times [0,T]$, and this should not cause any confusion. With this identification, the filtration $\FF^\mu=(\F^\mu_t)_{t \in [0,T]}$ on $\Omega$ (or on $\P(\C \times [0,T])$) is defined by
\[
\F^\mu_t = \sigma\{\mu(C) : C \in \F^{W,\tau}_t\}.
\]
Equivalently, if $\pi_t$ is defined on $\C \times [0,T]$ by $\pi_t(w,s) = (w_{\cdot \wedge t}, s \wedge t)$, then $\F^\mu_t = \sigma\{\mu \circ \pi_t^{-1}\}$.
We also write
\[
\mu^W = \mu(\cdot \times [0,T]), \quad\quad \mu^\tau = \mu(\C \times \cdot),
\]
for the two marginals of $\mu$, which take values in $\P(\C)$ and $\P([0,T])$, respectively.
Given a filtration $\FF=(\F_t)_{t \in [0,T]}$, we write $\FF_+$ for the right-continuous filtration $(\F_{t+})_{t \in [0,T]}$, where as usual $\F_{t+} := \cap_{s > t}\F_s$ for $t \in [0,T)$ and $\F_{T+}=\F_T$.  Note that the right-filtration $\FF^\tau_+$ is the smallest filtration for which $\tau$ is a stopping time, and for this reason the appearance of right-continuous filtrations in the following definition quite natural:

\begin{definition} \label{def:weakMFE}
A \emph{weak mean field equilibrium (MFE)} is a probability measure $P$ on $\Omega$ such that:
\begin{enumerate}
\item $(B,W)$ is a Wiener process with respect to the full filtration $\FF^{B,W,\mu,\tau}_+$.
\item $(B,\mu)$ is independent of $W$.
\item $\tau$ is compatible with $(B,W,\mu)$, in the sense that $\F^\tau_{t+}$ is conditionally independent of $\F^{B,W,\mu}_T$ given $\F^{B,W,\mu}_{t+}$, for every $t \in [0,T]$.
\item The optimality condition holds:
\[
\E^{P}[F(B,W,\mu^\tau,\tau)] = \sup_{P'}\E^{P'}[F(B,W,\mu^\tau,\tau)],
\]
where the supremum is over all $P' \in \P(\Omega)$ satisfying (1-3) as well as $P' \circ (B,W,\mu)^{-1} = P \circ (B,W,\mu)^{-1}$.
\item The weak fixed point condition holds: $\mu = P\left((W,\tau) \in \cdot \ | \ B,\mu \right)$.
\end{enumerate}
\end{definition}

The following result is the first justification for the above definition, and after stating it we will elaborate further on the intuitive meaning of a weak MFE. Recall that $\W$ denotes Wiener measure, and write $\W^2 = \W \times \W$ for the product measure on $\C^2$.

\begin{proposition} \label{pr:strongisweak}
Assume that $F$ is bounded and jointly measurable and that $t \mapsto F(b,w,m,t)$ is continuous, for every $m$ and $\W^2$-almost every $(b,w)$. Suppose $\tau^*$ is a strong MFE, and define $\mu = \W^2(\tau^* \in \cdot | B)$. Then the measure
\begin{align}
P = \W^2 \circ (B,W,\mu,\tau^*)^{-1} \label{def:strongisweakP}
\end{align}
is a weak MFE.
\end{proposition}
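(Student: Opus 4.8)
The plan is to verify the five conditions of Definition \ref{def:weakMFE} for the measure $P$ of \eqref{def:strongisweakP}, with conditions (1)--(3) and (5) being essentially bookkeeping about the product structure of $\W^2$ and condition (4) the real content. Throughout I would write $\mu = g(B) := \W^2\big((W,\tau^*)\in\cdot \mid B\big)$ for the regular conditional law of $(W,\tau^*)$ given $B$; this $\P(\C\times[0,T])$-valued reading of $\mu$ is the one consistent with \eqref{def:Omega} and condition (5), and its marginal $\mu^\tau$ equals the measure $\W^2(\tau^*\in\cdot\mid B)$ of Definition \ref{def:strongMFE}. The key observation for the easy conditions is that \emph{under $P$ the enlargement of $\FF^{B,W}$ by $(\mu,\tau)$ is trivial}: since $\tau^*$ is an $\FF^{B,W}$-stopping time, $\F^\tau_t\subseteq\F^{B,W}_t$; since $B\perp W$ and $\tau^*$ is an $\FF^{B,W}$-stopping time, the variable $\pi_t(W,\tau^*)=(W_{\cdot\wedge t},\tau^*\wedge t)$ is $\F^{B,W}_t$-measurable and conditionally independent of $\F^B_T$ given $\F^B_t$, so $\mu\circ\pi_t^{-1}=\W^2(\pi_t(W,\tau^*)\in\cdot\mid B)$ is $\F^B_t$-measurable, i.e.\ $\F^\mu_t\subseteq\F^B_t$; and $\F^\mu_T=\sigma(\mu)\subseteq\F^B_T$. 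Hence $\F^{B,W,\mu,\tau}_{t+}=\F^{B,W}_{t+}$ for every $t$ under $P$. From this, (1) follows because Brownian motion is a Wiener process for the right-continuous hull of its own filtration; (3) is immediate because $\F^\tau_{t+}\subseteq\F^{B,W}_{t+}=\F^{B,W,\mu}_{t+}$ lies in the conditioning $\sigma$-field; (2) holds because $(B,\mu)=(B,g(B))$ is $\sigma(B)$-measurable and $B\perp W$; and (5) holds because $(B,W,\mu,\tau)$ has the $P$-law of $(B,W,g(B),\tau^*)$ under $\W^2$, whence $P\big((W,\tau)\in\cdot\mid B,\mu\big)=P\big((W,\tau)\in\cdot\mid B\big)=g(B)=\mu$.

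For condition (4): since $P$ itself satisfies (1)--(3) and trivially has the right $(B,W,\mu)$-marginal, it suffices to prove $\E^{P'}[F(B,W,\mu^\tau,\tau)]\le\E^{P}[F(B,W,\mu^\tau,\tau)]$ for every $P'$ satisfying (1)--(3) with $P'\circ(B,W,\mu)^{-1}=P\circ(B,W,\mu)^{-1}$. Fixing such a $P'$, I would first note that $\mu=g(B)$ $P'$-a.s., that $(B,W)\sim\W^2$ under $P'$, and that the inclusions $\F^\mu_t\subseteq\F^B_t$ and $\F^\mu_T\subseteq\F^B_T$ — being properties of the joint law of $(B,\mu)$ — persist under $P'$. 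Hence under $P'$: (1) says $(B,W)$ is a Wiener process for $\GG_+$, where $\GG:=\FF^{B,W,\tau}$, and (3) reads $\F^\tau_{t+}\perp\F^{B,W}_T\mid\F^{B,W}_{t+}$; absorbing $\F^{B,W}_{t+}$ into the left-hand side, this is the H-hypothesis that $\FF^{B,W}_+$ is immersed in $\GG_+$ under $P'$ (cf.\ Section \ref{se:compatibility}). Next, freeze the reward: with $\Phi(b,w,t):=F(b,w,g(b)^\tau,t)$ — bounded, measurable, and continuous in $t$ for $\W^2$-a.e.\ $(b,w)$ — one has $F(B,W,\mu^\tau,\tau)=\Phi(B,W,\tau)$ under both $P$ and $P'$. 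Let $Y^*$ be the Snell envelope of $t\mapsto\Phi(B,W,t)$ relative to $\FF^{B,W}_+$ under $\W^2$; standard optimal stopping theory gives that $Y^*$ is a bounded right-continuous supermartingale with $\sup_\sigma\E^{\W^2}[\Phi(B,W,\sigma)]=\E^{\W^2}[Y^*_0]$, the supremum over $\FF^{B,W}$-stopping times $\sigma$. By the immersion property $Y^*$ is a $\GG_+$-supermartingale under $P'$, and since $\tau\le T$ is a $\GG_+$-stopping time, optional sampling together with $\Phi(B,W,\tau)\le Y^*_\tau$ yields
\[
\E^{P'}[\Phi(B,W,\tau)]\ \le\ \E^{P'}[Y^*_\tau]\ \le\ \E^{P'}[Y^*_0]\ =\ \E^{\W^2}[Y^*_0]\ =\ \sup_\sigma\E^{\W^2}[\Phi(B,W,\sigma)],
\]
where $\E^{P'}[Y^*_0]=\E^{\W^2}[Y^*_0]$ because $Y^*_0$ is a function of $(B,W)$. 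Finally, the strong MFE property of Definition \ref{def:strongMFE} identifies the last term with $\E^{\W^2}[\Phi(B,W,\tau^*)]=\E^{P}[F(B,W,\mu^\tau,\tau)]$, which closes the argument.

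I expect the main obstacle to be the implication ``conditions (1) and (3), with matching $(B,W,\mu)$-law $\Rightarrow$ immersion / H-hypothesis $\Rightarrow$ compatible randomization cannot beat ordinary stopping times'' — the Snell-envelope-plus-immersion step above — since this is exactly the circle of ideas the paper develops in Section \ref{se:compatibility}, and in a finished proof one would quote the pertinent lemma there rather than rederive it. Two technical caveats to handle with care are: (a) if $F$ is not assumed non-anticipative, one replaces $\Phi(B,W,\cdot)$ by its $\FF^{B,W}$-optional projection, which by immersion coincides with its $\GG_+$-optional projection, so the preceding chain of inequalities is unchanged; and (b) the raw filtration used in Definition \ref{def:strongMFE} must be reconciled with the right-continuous (usual-conditions) filtration needed for the Snell envelope, which does not affect the optimal value because $\Phi(B,W,\cdot)$ has continuous paths. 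Everything else is a routine consequence of the product structure of $\W^2$ and the fact that $\tau^*$ is an $\FF^{B,W}$-stopping time.
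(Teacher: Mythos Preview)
Your treatment of conditions (1), (2), (3), and (5) matches the paper's almost exactly: both arguments rest on the observation that, under $P$, the completion of $\F^{B,W,\mu,\tau}_t$ collapses to that of $\F^{B,W}_t$ (because $\tau^*$ is an $\FF^{B,W}$-stopping time and $\F^\mu_t\subset\F^B_t$), from which everything follows by the product structure of $\W^2$.

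Your route for condition (4), however, is genuinely different. The paper simply quotes Corollary~\ref{co:same-supremum}, which in turn rests on Theorem~\ref{th:nonradomizeddensity2}: compatible randomized stopping times form the weak closure of \emph{bona fide} $\FF^{B,W,\mu}$-stopping times, so the two suprema coincide, and the strong MFE property finishes. You instead argue directly via the Snell envelope: freeze the measure to reduce to an ordinary optimal stopping problem, invoke immersion (the H-hypothesis, equivalent to (3) here) to transport the $\FF^{B,W}_+$-supermartingale property of the Snell envelope to the enlarged filtration $\GG_+$, and then apply optional sampling at the $\GG_+$-stopping time $\tau$. This is correct and more self-contained for this particular proposition, since it avoids the approximation machinery of Theorem~\ref{th:nonradomizeddensity2}; on the other hand, the paper needs that theorem anyway for the limit theorems and the existence proof, so from its point of view quoting the corollary is the economical choice. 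Your caveats (a) and (b) are apt: the optional-projection reconciliation in (a) is exactly one of the standard equivalent formulations of the H-hypothesis (cf.\ Br\'emaud--Yor), and (b) is harmless because of path continuity of the reward.
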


The proof of Proposition \ref{pr:strongisweak} is in Section \ref{se:compatibility}. With some abuse of terminology, we may refer to the measure $P$ itself, defined in \eqref{def:strongisweakP}, as a strong MFE.
We may define also some intermediate notions of MFE. It may happen that $\tau$ is a.s. $(B,W,\mu)$-measurable under $P$, in which case we say $P$ is a \emph{weak MFE with strong stopping time}.\footnote{To say that $\tau$ is a.s. $(B,W,\mu)$-measurable under $P$ means that $\tau$ is measurable with respect to the $P$-completion of $\sigma(B,W,\mu)$. Equivalently, there exists a measurable map $\widetilde{\tau} : \C^2 \times \P(\C \times [0,T]) \rightarrow [0,T]$ such that $P(\tau=\widetilde{\tau}(B,W,\mu))=1$.} In contrast, we may refer to a weak MFE more verbosely as a \emph{weak MFE with weak stopping time}, to emphasize the failure of $\tau$ to be $(B,W,\mu)$-measurable. Likewise, we say that a weak MFE $P$ is a \emph{strong MFE with weak stopping time} if $\mu$ is $P$-a.s. $B$-measurable. A \emph{strong MFE with strong stopping time}, naturally, requires both of these measurability conditions, and according to Proposition \ref{pr:strongisweak} this reduces to what we have already called a \emph{strong MFE}.

The ``compatibility'' condition (3) of Definition \ref{def:weakMFE} is somewhat unusual. As mentioned above, we cannot expect $\tau$ to be $(B,W,\mu)$-measurable after taking weak limits, but conditions (3) captures an important structure we do retain, as does the requirement in (1) that $(B,W)$ remain Wiener processes with respect to the larger filtration.
Similar compatibility conditions were identified in the stochastic differential mean field games in \cite{lacker-mfglimit,carmonadelaruelacker-mfgcommonnoise} (see also \cite{CarmonaDelarue}), and indeed these notions of compatibility all fall under the same umbrella, which we clarify somewhat in Section \ref{se:compatibility}. Intuitively, our representative agent is allowed to randomize her stopping time externally to the signal $(B,W,\mu)$, as long as at each time $t$ this randomization is conditionally independent of all future information given the history of the signal. Mathematically, the reason compatibility arises is the following, stated informally here and made precise in Theorem \ref{th:nonradomizeddensity2}: given $\tau$ satisfying (3), there exists a sequence of $\FF^{B,W,\mu}$-stopping times $\tau_k$ such that $(B,W,\mu,\tau_k) \Rightarrow (B,W,\mu,\tau)$, where $\Rightarrow$ denotes convergence in law. In this sense, the set of compatible stopping times is the closure of the set of \emph{bona fide} stopping times.

\subsubsection*{\textbf{Continuous Objective Functions}}
We are nearly ready to state the main results of this section, but first we need some assumptions:

\begin{assumptionD*}
The function $F$ is bounded and jointly measurable, and $\P([0,T]) \times [0,T] \ni (m,t) \mapsto F(b,w,m,t)$ is continuous for $\W^2$-almost every $(b,w) \in \C^2$, when $\P([0,T])$ is equipped with the topology of weak convergence.
\end{assumptionD*}

The boundedness assumption is for convenience only, and this could easily be relaxed at the cost of some careful growth or integrability assumptions. The continuity assumption is important for our weak convergence methods, but unfortunately it can be restrictive. For instance, our bank run model in the introduction involved the discontinuous function $\P([0,T]) \times [0,T] \ni (m,t) \mapsto m[0,t]$. A close approximation of the bank run model could be accounted for nonetheless by replacing $m[0,t]$ by $\phi * m(t) = \int_{[0,T]}\phi(t-s)m(ds)$, where $\phi : [-T,T]\rightarrow \R$ is continuous and in some sense ``close to'' the step function $1_{[0,T]}$.

The first result is a limit theorem, stating that $n$-player equilibria converge to weak MFE. Recall the notation of the $n$-player game in Section \ref{se:nplayergame}. For each $n$ and each $t^1,\ldots,t^n \in [0,T]$ we define the random joint empirical measure (a measure on $\C \times [0,T]$)
\begin{align}
\widehat{\mu}^n(t^1,\ldots,t^n) = \frac{1}{n}\sum_{i=1}^n\delta_{(W^i,t^i)} .\label{def:empirical-measure-full}
\end{align}

\begin{theorem} \label{th:mainlimit}
Suppose assumption \textbf{D} holds.
Let $\epsilon_n \ge 0$ with $\epsilon_n \rightarrow 0$, and suppose $\vec{\tau}^{\,n}=(\tau^n_1,\ldots,\tau^n_n)$ is an $\epsilon_n$-Nash equilibrium for the $n$-player game for each $n$. 
Define 
\[
P_n = \frac{1}{n}\sum_{i=1}^n\PP \circ \left(B,W^i,\widehat{\mu}^n(\vec{\tau}^{\,n}),\tau^n_i\right)^{-1}.
\]
Then $(P_n)_{n=1}^\infty$ is tight, and every weak limit is a weak MFE.
\end{theorem}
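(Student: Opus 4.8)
The plan is to proceed in four stages: (a) tightness of $(P_n)$; (b) extraction of a subsequential weak limit $P$ and passage to a.s.\ convergent copies via the Skorokhod representation theorem; (c) verification of conditions (1), (2), (3) and (5) of Definition~\ref{def:weakMFE}, which are comparatively soft; and (d) verification of the optimality condition (4), which is the crux. It is convenient to randomize the index: if $\kappa_n$ is uniform on $\{1,\dots,n\}$ and independent of $(B,W^1,\dots,W^n,\vec{\tau}^{\,n})$, then under $\PP\otimes\mathrm{Unif}\{1,\dots,n\}$ the measure $P_n$ is the law of the single random element $(B,W^{\kappa_n},\widehat{\mu}^n(\vec{\tau}^{\,n}),\tau^n_{\kappa_n})$ on $\Omega=\C^2\times\P(\C\times[0,T])\times[0,T]$. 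For tightness, since $\Omega$ is a finite product of Polish spaces it suffices to check each marginal: the $B$- and $W$-marginals are Wiener measure, the $\tau$-marginal is carried by the compact set $[0,T]$, and for the $\mu$-marginal I would invoke the standard criterion that a family of laws of $\P(S)$-valued random measures is tight iff the family of intensity (mean) measures is tight in $\P(S)$; here the intensity measure of $\widehat{\mu}^n(\vec{\tau}^{\,n})$ is $\tfrac1n\sum_i\mathrm{Law}(W^i,\tau^n_i)$, whose $\C$-marginal is Wiener measure and whose $[0,T]$-marginal is carried by $[0,T]$, hence a tight family. Fix a subsequence with $P_{n_k}\Rightarrow P$ and pass to a.s.\ convergent copies $(B_k,W_k,\mu_k,\tau_k)\to(B,W,\mu,\tau)$.

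The soft conditions follow because the prelimit measures $P_n$ satisfy exact analogues. Since $B$ and $W^{\kappa_n}$ are independent Brownian motions and $\widehat{\mu}^n(\vec{\tau}^{\,n})$ and the $\tau^n_i$ are adapted to $\FF^{B,W^1,\dots,W^n}$, the pair $(B,W^{\kappa_n})$ is a Wiener process for the smaller filtration $\FF^{B,W^{\kappa_n},\widehat{\mu}^n,\tau^n_{\kappa_n}}_+$; moreover, because a.s.\ the atom of $\widehat{\mu}^n(\vec{\tau}^{\,n})$ belonging to agent $\kappa_n$ is identifiable from $W^{\kappa_n}$, the time $\tau^n_{\kappa_n}$ is in fact a genuine $\FF^{B,W^{\kappa_n},\widehat{\mu}^n}_+$-stopping time, which a fortiori is the compatibility demanded in (3). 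These immersion/compatibility properties are stable under weak convergence by the results of Section~\ref{se:compatibility}, yielding (1) and (3) for $P$; concretely, (1) is obtained by passing the martingale characterization of $(B,W)$ through the weak limit against $\F^{B,W,\mu,\tau}_{s+}$-measurable bounded continuous test functionals and upgrading from the raw to the right-continuous filtration. For (5), note that conditionally on $(B,\widehat{\mu}^n(\vec{\tau}^{\,n}))$ the pair $(W^{\kappa_n},\tau^n_{\kappa_n})$ is a uniform draw from the atoms of $\widehat{\mu}^n(\vec{\tau}^{\,n})$, so the identity $\E^{P_n}[g(W,\tau)\,h(B,\mu)]=\E^{P_n}[\langle\mu,g\rangle\,h(B,\mu)]$ holds exactly for all bounded continuous $g$ on $\C\times[0,T]$ and $h$ on $\C\times\P(\C\times[0,T])$; both sides are integrals of bounded continuous functions of the canonical element, so $P_n\Rightarrow P$ transfers the identity to $P$, and a monotone class argument gives $\mu=P((W,\tau)\in\cdot\,|\,B,\mu)$. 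Finally for (2): by Varadarajan's law of large numbers the $\C$-marginal $\tfrac1n\sum_i\delta_{W^i}$ of $\widehat{\mu}^n(\vec{\tau}^{\,n})$ converges to $\W$ in probability, so $\mu^W=\W$ a.s.\ in the limit, and combined with (5) this forces $P(W\in\cdot\,|\,B,\mu)=\W$, i.e., $W$ is independent of $(B,\mu)$.

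For the optimality condition (4): let $P'$ satisfy (1)--(3) with $P'\circ(B,W,\mu)^{-1}=P\circ(B,W,\mu)^{-1}$; I must show $\E^P[F(B,W,\mu^\tau,\tau)]\ge\E^{P'}[F(B,W,\mu^\tau,\tau)]$. Because $\tau$ is compatible with $(B,W,\mu)$ under $P'$, Theorem~\ref{th:nonradomizeddensity2} supplies honest $\FF^{B,W,\mu}_+$-stopping times $\rho_k=\rho_k(B,W,\mu)$ with $(B,W,\mu,\rho_k)\Rightarrow(B,W,\mu,\tau)$ under $P'$, whence by boundedness and the continuity in Assumption~\textbf{D}, $\E[F(B,W,\mu^\tau,\rho_k)]\to\E^{P'}[F(B,W,\mu^\tau,\tau)]$. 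Fix $k$, and in the $n$-player game let agent $i$ deviate to $\sigma^k_{n,i}:=\rho_k(B,W^i,\widehat{\mu}^{n,-i})$, where $\widehat{\mu}^{n,-i}:=\tfrac1{n-1}\sum_{j\ne i}\delta_{(W^j,\tau^n_j)}$; since $\widehat{\mu}^{n,-i}$ is $\FF^{B,W^1,\dots,W^n}$-adapted and $\rho_k$ is an $\FF^{B,W,\mu}_+$-stopping time, this deviation is admissible. Averaging the $\epsilon_n$-Nash inequalities over $i$ gives $\tfrac1n\sum_i\E[F(B,W^i,\overline{\mu}^n(\vec{\tau}^{\,n,-i},\sigma^k_{n,i}),\sigma^k_{n,i})]\le\E^{P_n}[F(B,W,\mu^\tau,\tau)]+\epsilon_n$. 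Now $\widehat{\mu}^{n,-i}$ differs from $\widehat{\mu}^n(\vec{\tau}^{\,n})$, and $\overline{\mu}^n(\vec{\tau}^{\,n,-i},\sigma^k_{n,i})$ differs from the $[0,T]$-marginal of $\widehat{\mu}^{n,-i}$, by a deterministic $O(1/n)$ perturbation in total variation; since $F$ is bounded and jointly continuous in $(m,t)$, hence uniformly continuous on the compact $\P([0,T])\times[0,T]$, these perturbations are negligible in the limit, and $\mathrm{Law}(B,W^{\kappa_n},\widehat{\mu}^{n,-\kappa_n})\Rightarrow P\circ(B,W,\mu)^{-1}$. Provided that $(B,W^{\kappa_n},\widehat{\mu}^{n,-\kappa_n},\rho_k(B,W^{\kappa_n},\widehat{\mu}^{n,-\kappa_n}))\Rightarrow(B,W,\mu,\rho_k)$, sending $n_k\to\infty$ (using $P_{n_k}\Rightarrow P$, $\epsilon_{n_k}\to0$, Assumption~\textbf{D}) yields $\E[F(B,W,\mu^\tau,\rho_k)]\le\E^P[F(B,W,\mu^\tau,\tau)]$, and $k\to\infty$ then gives $\E^{P'}[F(B,W,\mu^\tau,\tau)]\le\E^P[F(B,W,\mu^\tau,\tau)]$. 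Since $P$ itself satisfies (1)--(3) it is feasible, so this proves (4).

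I expect the ``Provided'' clause above to be the main obstacle: an honest stopping time is not a continuous functional of the path, so pushing the weak convergence $\mathrm{Law}(B,W^{\kappa_n},\widehat{\mu}^{n,-\kappa_n})\Rightarrow P\circ(B,W,\mu)^{-1}$ through $\rho_k$ requires $\rho_k$ to be a.s.\ continuous under the limiting law. This is exactly where the randomized-stopping-time and progressive-enlargement material of Section~\ref{se:compatibility} is needed: one must arrange (in the application of Theorem~\ref{th:nonradomizeddensity2}, possibly after a further mollification) that the approximants $\rho_k$ are a.s.-continuous functionals of $(B,W,\mu)$, so that a continuous-mapping/Slutsky argument closes the gap. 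This difficulty --- together with the careful bookkeeping of right-continuous filtrations and the verification that the prelimit immersion/compatibility properties genuinely hold and genuinely pass to the limit --- is where I expect essentially all the work to lie; the remaining steps are routine weak-convergence manipulations.
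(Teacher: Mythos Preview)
Your proposal is broadly correct and follows the same skeleton as the paper's proof (tightness via marginals and mean measures; verification of (1), (2), (5) by passing identities through the weak limit; reduction of (4) to deviations given by continuous functionals of $(B,W,\mu)$). Two points deserve comment.

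\textbf{On (3).} Your justification that ``immersion/compatibility properties are stable under weak convergence by the results of Section~\ref{se:compatibility}'' is not quite supported: the closure result in Theorem~\ref{th:nonradomizeddensity2} is proved for a \emph{fixed} input law $\rho$, whereas here $\rho_n=P_n\circ(B,W,\mu)^{-1}$ varies with $n$. The paper avoids this issue entirely. It first proves (2) and (5), then invokes Lemma~\ref{le:key-compatibility-test}, which reduces (3) to showing that $\F^{B,\mu}_T\vee\F^{W,\tau}_t$ is independent of $\sigma\{W_s-W_t:s\in[t,T]\}$. That independence is established by the leave-one-out trick you already use in (4): replace $\widehat{\mu}^n$ by $\widehat{\mu}^{n,-i}$, which is $\F^{B,(W^k)_{k\ne i}}_T$-measurable and hence independent of the future increments of $W^i$, and absorb the $O(1/n)$ total-variation discrepancy. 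Your atom-identification observation (that $\tau^n_{\kappa_n}$ is a genuine $\FF^{B,W^{\kappa_n},\widehat{\mu}^n}_+$-stopping time) is correct but not needed along this route.

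\textbf{On (4).} You anticipate the resolution exactly: Theorem~\ref{th:nonradomizeddensity2} shows that $\RC_0(\rho)$---stopping times of the form $\hat{\sigma}(B,W,\mu)$ with $\hat{\sigma}$ \emph{continuous}---is dense in $\RC^+(\rho)$, so via Corollary~\ref{co:same-supremum} one may take the approximant to be a continuous functional, and your ``Provided'' clause closes by continuous mapping. The paper defines the deviation using the full empirical measure $\widehat{\mu}^n(\vec{\tau}^{\,n})$ rather than the leave-one-out version, but as you note this is immaterial. Finally, the paper does not use Skorokhod representation; all limits are handled through Lemma~\ref{le:jacodmemin}, which tolerates discontinuity of the integrand in the $(B,W)$ coordinate because the $(B,W)$-marginal of $P_n$ is fixed at $\W^2$.
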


The measure $P_n$ appearing in Theorem \ref{th:mainlimit} is quite a natural object to study, if interpreted the right way. We may write $P_n = \PP \circ (B,W^U,\widehat{\mu}^n(\vec{\tau}^{\,n}),\tau^n_U)^{-1}$, where $U$ is a random variable drawn uniformly from $\{1,\ldots,n\}$, independent of $(B,W^i)_{i=1}^\infty$. Think of this as a \emph{randomly selected representative agent}. As $\tau^n_i$ may fail to be symmetric in any useful sense, one would not get far by working with, say, $\PP \circ (B,W^1,\widehat{\mu}^n(\vec{\tau}^{\,n}),\tau^n_1)^{-1}$, which corresponds to \emph{arbitrarily} choosing agent $1$ as the representative. The same idea appears in the following converse to Theorem \ref{th:mainlimit}, which is an analog of Theorem \ref{th:main-converse} for the case of weak equilibria.

\begin{theorem} \label{th:converselimit}
Suppose assumption \textbf{D} holds. Let $P$ be a weak MFE. Then there exist  $\epsilon_n \rightarrow 0$ and $\epsilon_n$-Nash equilibria $\vec{\tau}^{\,n}=(\tau^n_1,\ldots,\tau^n_n)$ such that
\[
P = \lim_{n\rightarrow\infty}\frac{1}{n}\sum_{i=1}^n\PP \circ \left(B,W^i,\widehat{\mu}^n(\vec{\tau}^{\,n}),\tau^n_i\right)^{-1}.
\]
In fact, if $\tau^*=\tau^*(B,W)$ is a strong MFE in the sense of Definition \ref{def:strongMFE}, then we can take $\vec{\tau}^{\,n}$ of the form $\tau^n_i = \tau^*(B,W^i)$.
\end{theorem}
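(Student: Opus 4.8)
The plan is to handle the two assertions in the order they are stated, beginning with the explicit construction $\tau^n_i=\tau^*(B,W^i)$ from a strong MFE, which is the constructive core, and then the general weak MFE, where the real difficulty lies. Suppose $\tau^*=\tau^*(B,W)$ is a strong MFE in the sense of Definition \ref{def:strongMFE}; I would let $\mu$ be the conditional law of $(W,\tau^*)$ given $B$ (the measure component of the weak MFE attached to $\tau^*$ by Proposition \ref{pr:strongisweak}) and $\mu^\tau=\W^2(\tau^*\in\cdot\,|\,B)$ its $[0,T]$-marginal, and set $\tau^n_i:=\tau^*(B,W^i)$, which is an $\FF^{B,W^i}$-stopping time, hence admissible. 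Two facts are then immediate. First, conditionally on $B$ the pairs $(W^i,\tau^*(B,W^i))$ are i.i.d.\ with common law $\mu$, so by a conditional Glivenko--Cantelli (Varadarajan) argument $\widehat{\mu}^n(\vec{\tau}^{\,n})\to\mu$ and $\overline{\mu}^n(\vec{\tau}^{\,n})\to\mu^\tau$ weakly, almost surely. Second, by exchangeability of the construction $P_n:=\frac1n\sum_{i=1}^n\PP\circ(B,W^i,\widehat{\mu}^n(\vec{\tau}^{\,n}),\tau^n_i)^{-1}=\PP\circ(B,W^1,\widehat{\mu}^n(\vec{\tau}^{\,n}),\tau^n_1)^{-1}$, and since only the measure coordinate varies with $n$ and it converges a.s., $P_n\to\W^2\circ(B,W,\mu,\tau^*)^{-1}=P$ — no continuity of $F$ being needed here.

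The substance of the strong case is the $\epsilon_n$-Nash property. Fix $i$ and an alternative $\FF^{B,W^1,\ldots,W^n}$-stopping time $\sigma$. Two replacements reduce the claim to optimality against the fixed measure $\mu^\tau$. First, $\overline{\mu}^n(\vec{\tau}^{\,n,-i},\sigma)$ and $\overline{\mu}^n(\vec{\tau}^{\,n})$ differ in one of $n$ atoms, hence are within $2/n$ in total variation and so within $2/n$ for any metrization of weak convergence; since $(m,t)\mapsto F(b,w,m,t)$ is, by Assumption \textbf{D}, continuous and therefore uniformly continuous on the compact $\P([0,T])\times[0,T]$ with some $(b,w)$-dependent modulus $\rho_{b,w}$, and since $F$ is bounded, dominated convergence gives $\sup_\sigma|\E[F(B,W^i,\overline{\mu}^n(\vec{\tau}^{\,n,-i},\sigma),\sigma)]-\E[F(B,W^i,\overline{\mu}^n(\vec{\tau}^{\,n}),\sigma)]|\le\E[\rho_{B,W^i}(2/n)]\to0$. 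Second, the same device, now using $\overline{\mu}^n(\vec{\tau}^{\,n})\to\mu^\tau$, replaces $\overline{\mu}^n(\vec{\tau}^{\,n})$ by $\mu^\tau$ in any evaluation $F(B,W^i,\cdot,\rho)$ at cost $o(1)$ uniformly over random times $\rho$. It therefore suffices to show $\E[F(B,W^i,\mu^\tau,\sigma)]\le\E[F(B,W^i,\mu^\tau,\tau^*(B,W^i))]$. For this I would condition on $(W^k)_{k\ne i}$, which is independent of $(B,W^i)$: for a.e.\ realization $\vec w$ the section $\sigma(\cdot\,;\vec w)$ is an $\FF^{B,W^i}$-stopping time, so — $\mu^\tau$ being $\sigma(B)$-measurable and $(B,W^i)$ having the law of $(B,W)$ — one gets $\E[F(B,W^i,\mu^\tau,\sigma(\cdot\,;\vec w))]\le\sup_\theta\E[F(B,W,\mu^\tau,\theta)]=\E[F(B,W,\mu^\tau,\tau^*)]$, the last equality being the strong MFE optimality; integrating in $\vec w$ closes the chain, with an $\epsilon_n\to0$ that is uniform in $i$ by exchangeability.

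For a general weak MFE $P$ the obstruction is twofold: under $P$, $\tau$ need only be compatible with $(B,W,\mu)$ rather than a measurable function of it, and, more seriously, $\mu$ need not be $B$-measurable, whereas the $n$-player strategies must be built from $(B,W^1,\ldots,W^n)$ alone. I would first invoke the compatibility theory of Section \ref{se:compatibility}, in particular the density statement of Theorem \ref{th:nonradomizeddensity2} that a compatible $\tau$ is a weak limit of genuine $\FF^{B,W,\mu}$-stopping times, to reduce to the case in which $\tau$ is an honest $\FF^{B,W,\mu}$-stopping time, controlling the attendant loss of optimality through the boundedness and continuity in Assumption \textbf{D}. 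To realize the (possibly non-$B$-measurable) random measure $\mu$, I would then define the $n$-player profile self-referentially, taking $\tau^n_i$ to be a measurable function of $(B,W^i)$ and of the empirical measure $\widehat{\mu}^n(\vec{\tau}^{\,n})$ itself — a fixed point of a best-response-type map whose existence would follow from a Kakutani argument exploiting the continuity in \textbf{D} and compactness of the relevant spaces of measures; the de~Finetti / exchangeable limit of these correlated-but-symmetric empirical measures then plays the role of the common random $\mu$. Finally, a tightness-and-identification argument, essentially running the proof of Theorem \ref{th:mainlimit} in reverse, would extract from the resulting $P_n$ a subsequential weak-MFE limit.

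The main obstacle is precisely this last identification: forcing the limit to be \emph{exactly} the prescribed $P$, rather than merely \emph{some} weak MFE. The delicate point is that the ``common randomness'' carried by $\mu$ is invisible to any single agent and emerges only from the joint law of the whole population, so matching it requires the full strength of the compatibility and randomized-stopping-time machinery of Section \ref{se:compatibility}; by contrast, when $P$ comes from a strong MFE the measure $\mu$ is pinned down by $B$, this difficulty evaporates, and the ``in fact'' clause admits the simple explicit construction described above.
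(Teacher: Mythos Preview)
Your treatment of the strong MFE case is correct and essentially matches the paper's argument (it is the analog of Theorem \ref{th:main-converse} in this setting): the conditional law of large numbers gives $\widehat{\mu}^n(\vec{\tau}^{\,n})\to\mu$ a.s., the $2/n$ total-variation bound plus uniform continuity of $F$ in $(m,t)$ handles the perturbation of one atom, and conditioning on $(W^k)_{k\ne i}$ reduces to the strong-MFE optimality.

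For the general weak MFE, however, your proposed route has a genuine gap, and you have correctly located it yourself: a Kakutani fixed point produces \emph{some} symmetric profile, not one tailored to the prescribed $P$, and there is no mechanism to force the limit to equal $P$. The paper avoids this entirely by a much more direct construction. Rather than trying to \emph{generate} the non-$B$-measurable $\mu$ from the empirical measure via a fixed point, the paper \emph{imports} it as external randomness: on an auxiliary probability space one takes $(B,\mu)$ with law $P\circ(B,\mu)^{-1}$ and then a sequence $(W^i,\tau^i)_{i\ge1}$ that is conditionally i.i.d.\ given $(B,\mu)$ with common conditional law $\mu$. The fixed-point property (5) of the weak MFE then guarantees that $(B,W^i,\mu,\tau^i)$ has law exactly $P$ for every $i$, and the conditional law of large numbers gives $\widehat{\mu}^n(\vec{\tau}^{\,n})\to\mu$ a.s., so convergence of $P_n$ to $P$ is \emph{automatic by construction}---no identification problem arises.

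The $\epsilon_n$-Nash property on this extended space follows exactly as in your strong case, the key observation being that any $\sigma\in\SS_n$ (an $\FF^{B,W^1,\ldots,W^n}$-stopping time) automatically satisfies the compatibility condition (3) relative to $(B,W^1,\mu)$, so the weak-MFE optimality (4) applies. The remaining issue is that the $\tau^i$ so constructed are \emph{not} $\FF^{B,W^1,\ldots,W^n}$-stopping times, since they depend on the external randomness in $\mu$. Here the compatibility machinery enters, but not where you placed it: one applies a vector-valued extension of Theorem \ref{th:nonradomizeddensity2} to the $n$-tuple $(\tau^1,\ldots,\tau^n)$---which is compatible with $(B,W^1,\ldots,W^n)$ because the latter is a Wiener process in the full filtration---to approximate by genuine $\FF^{B,W^1,\ldots,W^n}$-stopping times $(\tau^1_k,\ldots,\tau^n_k)$, and then diagonalizes in $k$ and $n$. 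So the density of non-randomized stopping times is used \emph{at the $n$-player level}, after the conditionally-i.i.d.\ construction, not at the level of the single representative agent beforehand.
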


Finally, we state an existence result for weak MFE. Combined with Theorem \ref{th:converselimit}, it shows that approximate $n$-player equilibria exist for the $n$-player games.

\begin{theorem} \label{th:existence}
Under assumption \textbf{D}, there exists a weak MFE.
\end{theorem}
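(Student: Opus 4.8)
The natural strategy is a compactness-and-fixed-point argument carried out on the canonical space $\Omega = \C^2 \times \P(\C \times [0,T]) \times [0,T]$, exploiting the fact that Theorem \ref{th:mainlimit} has already done most of the limiting work. Concretely, the first step is to establish that $\epsilon_n$-Nash equilibria exist for the $n$-player games for some sequence $\epsilon_n \to 0$. Since each agent faces a genuine optimal stopping problem over a fixed, compact time interval with a bounded measurable reward (assumption \textbf{D}), one expects $\epsilon$-optimal stopping times to exist; to produce an $\epsilon_n$-Nash equilibrium one can either discretize time to a finite grid and invoke a finite-game fixed-point theorem (Brouwer/Kakutani on the simplex of mixed stopping rules), or better, appeal to the strong MFE existence already available under the stronger assumptions and approximate $F$ by functions satisfying assumption \textbf{B}. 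The cleanest route, however, is a direct one: approximate the given $F$ satisfying \textbf{D} by a sequence $F^k$ for which Theorem \ref{th:existence-B} applies (e.g. mollified, monotone surrogates), obtain strong MFEs $\tau^{*,k}$ for $F^k$, push them forward via Proposition \ref{pr:strongisweak} to weak MFEs $P^k$ for $F^k$, and then pass to the limit in $k$. But this requires controlling the approximation error in the optimality condition, which is delicate; so I would instead take the $n$-player route, which is more robust.

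Assuming $\epsilon_n$-Nash equilibria $\vec{\tau}^{\,n}$ exist, the second step is immediate: set $P_n = \frac1n\sum_{i=1}^n \PP \circ (B,W^i,\widehat\mu^n(\vec\tau^{\,n}),\tau^n_i)^{-1}$ as in Theorem \ref{th:mainlimit}. That theorem tells us $(P_n)$ is tight and every weak subsequential limit is a weak MFE. Extracting one convergent subsequence (Prokhorov) then yields a weak MFE, and we are done. So modulo the existence of $\epsilon_n$-Nash equilibria, Theorem \ref{th:existence} is a corollary of Theorem \ref{th:mainlimit}.

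The main obstacle is therefore precisely the construction of $\epsilon_n$-Nash equilibria in the $n$-player game. The difficulty is that best-response correspondences for optimal stopping need not be convex-valued or even nonempty in the space of \emph{pure} stopping times, and the payoff $F(B,W^i,\overline\mu^n(\vec\tau),\tau_i)$ depends on the profile through the empirical measure in a way that is only measurable (assumption \textbf{D} gives continuity in $(m,t)$ but the dependence of $\overline\mu^n$ on the other agents' randomized stopping rules must be handled carefully). The way I would resolve this is to enlarge the strategy space to \emph{randomized} stopping times (equivalently, measures on $\C\times[0,T]$ compatible with the agent's filtration in the sense of Definition \ref{def:weakMFE}(3)), which is a compact convex set under a suitable topology; show the expected payoff is continuous and affine in each agent's randomized strategy while the others are held fixed; verify that the best-response correspondence has closed graph and nonempty convex values; and apply the Kakutani--Fan--Glicksberg fixed point theorem to obtain an exact Nash equilibrium in randomized strategies for each $n$. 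A randomized Nash equilibrium is in particular a (trivially) $\epsilon_n$-Nash equilibrium profile once one checks the definitions match up (or one approximates the randomized equilibrium by pure $\epsilon_n$-optimal responses). Feeding these into $P_n$ and invoking Theorem \ref{th:mainlimit} completes the proof. The technical crux is the compactness/continuity of the randomized-strategy formulation together with the compatibility bookkeeping; this is exactly the kind of argument prepared for in Section \ref{se:compatibility}, so I would lean heavily on the randomized-stopping-time machinery developed there.
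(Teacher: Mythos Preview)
Your approach is sound but follows a genuinely different route from the paper. You propose to manufacture $\epsilon_n$-Nash equilibria for the $n$-player games (via a Kakutani--Fan--Glicksberg argument on randomized stopping times, then approximating by pure ones using the density result of Theorem \ref{th:nonradomizeddensity2}) and then invoke Theorem \ref{th:mainlimit} to extract a weak MFE as a limit point. The paper, by contrast, never touches the $n$-player game at this stage: it discretizes the \emph{common noise} $B$ to a finite sub-$\sigma$-field $\G^n_T \subset \F^B_T$, applies Kakutani directly to the single representative-agent problem to obtain a ``discrete MFE'' (a pair $(\overline m_n, Q_n)$ with $\overline m_n$ adapted to $\G^n$ and $Q_n$ optimal among compatible laws; this is Theorem \ref{th:discrete-existence}), and then passes to the limit in the discretization parameter $n$, verifying properties (1)--(5) of Definition \ref{def:weakMFE} by hand for the limit. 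Your route has the virtue of making Theorem \ref{th:existence} essentially a corollary of Theorem \ref{th:mainlimit}, at the cost of setting up and solving the $n$-player randomized game carefully (independent external randomizations, multilinearity of payoffs, joint continuity); the paper's route is self-contained and keeps the fixed-point argument one-dimensional (a single agent versus a finitely-parametrized $\overline m$), but must redo the limiting compatibility analysis rather than citing Theorem \ref{th:mainlimit}. Both arguments ultimately rest on the same compactness of compatible randomized stopping times (Theorem \ref{th:nonradomizeddensity2}) and the continuity furnished by assumption \textbf{D}.
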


Some comments are in order at this stage. Combining the two limit theorems tells us that the set of weak MFEs is precisely the set of limits of $n$-player approximate equilibria. If we can find a strong MFE $\tau^*$, the converse limit theorem \ref{th:converselimit} shows how to construct from it an approximate $n$-player equilibria in a pleasantly symmetric and distributed form, as in Theorem \ref{th:main-converse}. The general structure of the results and arguments are  similar to \cite{lacker-mfglimit,carmonadelaruelacker-mfgcommonnoise}.

\section{Proofs in the general setup} \label{se:proofs-supermodular}

This section proves the results of Section \ref{se:results}.
Throughout this section, we work on the space $(\overline{\Omega},\overline{\F},\overline{\FF},\overline{\PP})$ defined in Section \ref{se:nplayergame}. With some abuse of notation, any function $\phi$ on $\Omega^{\mathrm{com}} \times \Omega^{\mathrm{ind}}$ is automatically extended to all of $\overline{\Omega}$ by setting
\[
\phi(\omega^0,\omega^1,\ldots) := \phi(\omega^0,\omega^1).
\]

\subsection*{Proof of Theorem \ref{th:main-converse}} \label{se:proof-converse}

Abbreviate $\vec{\tau}^{\,n,-k} := (\vec{\tau}^{\,n})^{-k}$, and define
\begin{align*}
\epsilon_n = \sup_{\widetilde{\tau}}\E\left[F(W^0,W^1,\overline{\mu}^n(\vec{\tau}^{\,n,-1},\widetilde{\tau}) ,\widetilde{\tau})\right] - \E\left[F(W^0,W^1,\overline{\mu}^n(\vec{\tau}^{\,n}) ,\tau^1)\right],
\end{align*}
where the supremum is over $\FF^1$-stopping times. Clearly $\epsilon_n \ge 0$. By symmetry, the index $1$ could be replaced by any $k\in\{1,\ldots,n\}$. Hence, $\vec{\tau}^{\,n}$ is an $\epsilon_n$-Nash equilibrium for each $n$. We must only show that $\epsilon_n \rightarrow 0$.

First we show that
\begin{align}
\lim_{n\rightarrow\infty}\E\left[\sup_{t \in \T}\left|F(W^0,W^1,\overline{\mu}^n(\vec{\tau}^{\,n}),t) - F(W^0,W^1,\mu,t)\right|\right] = 0. \label{pf:limit1}
\end{align}
Note that the basic open sets of $\sigma(\P(\T),B(\T))$ are of the form
\[
U = \left\{m \in \P(\T) : \left|\int_{\T}\varphi_i\,d(m-\tilde{m})\right| < \epsilon, \ i=1,\ldots,k\right\},
\]
for $k \ge 1$, $\epsilon > 0$, and $\varphi_1,\ldots,\varphi_k \in B(\T)$.
Because $(\tau^k=\tau^*(W^0,W^k))_{k=1}^\infty$ are conditionally i.i.d. given $W^0$, and their common conditional law is $\mu=\mu(W^0)$, the law of large numbers yields
\begin{align}
\PP\left(\left.\overline{\mu}^n(\vec{\tau}^{\,n})  \notin U \right| W^0=\omega^0\right) \rightarrow 0, \label{pf:converse1}
\end{align}
for almost every $\omega^0$, for every basic $\sigma(\P(\T),B(\T))$-open neighborhood $U$ of $\mu(\omega^0)$.
Thanks to the continuity assumption (A.2), for each $\delta > 0$ and almost every $(\omega^0,\omega^1)$ we can find a basic $\sigma(\P(\T),B(\T))$-open neighborhood $U$ of $\mu(\omega^0)$ such that $\nu \in U$ implies
\[
\sup_{t \in \T}\left|F(\omega^0,\omega^1,\nu,t) - F(\omega^0,\omega^1,\mu(\omega^0),t)\right| < \delta.
\]
Thus, for a.e. $\omega^0$,
\[
\PP\left(\left.\sup_{t \in \T}\left|F(W^0,W^1,\overline{\mu}^n(\vec{\tau}^{\,n}),t) - F(W^0,W^1,\mu(W^0),t)\right| \ge \delta \right| W^0=\omega^0\right) \rightarrow 0.
\]
Thanks to assumption (A.3), the limit \eqref{pf:limit1} follows from dominated convergence.

Next, we argue that
\begin{align}
\lim_{n\rightarrow\infty}\sup_{\widetilde{\tau}}\E\left[F(W^0,W^1,\overline{\mu}^n(\vec{\tau}^{\,n,-1},\widetilde{\tau}) ,\widetilde{\tau})\right] = \sup_{\widetilde{\tau}}\E\left[F(W^0,W^1,\mu,\widetilde{\tau})\right]. \label{pf:limit2}
\end{align}
Indeed, using the easy estimate
\[
\sup_{t \in \T}\|\overline{\mu}^n(\vec{\tau}^{\,n,-1},t) - \overline{\mu}^n(\vec{\tau}^{\,n})\|_{TV} \le 2/n,
\]
along with \eqref{pf:converse1}, we deduce that for almost every $\omega^0$ and for every basic $\sigma(\P(\T),B(\T))$-open neighborhood $U$ of $\mu(\omega^0)$, we have
\[
\lim_{n\rightarrow\infty}\PP\left(\left.\overline{\mu}^n(\vec{\tau}^{\,n,-1},t)  \notin U, \text{ for some } t\in\T \right| W^0=\omega^0\right) = 0.
\]
Repeat the argument leading to \eqref{pf:limit1} above to get
\begin{align*}
\E\left[\sup_{t \in \T}\left|F(W^0,W^1,\overline{\mu}^n(\vec{\tau}^{\,n,-1},t) ,t) - F(W^0,W^1,\mu,t)\right|\right] \rightarrow 0.
\end{align*}
Finally, we conclude from \eqref{pf:limit1}, \eqref{pf:limit2}, and the optimality of $\tau^*$ that 
\begin{align*}
\lim_{n\rightarrow\infty}\epsilon_n &= \sup_{\widetilde{\tau}}\E\left[F(W^0,W^1,\mu,\widetilde{\tau})\right] - \E\left[F(W^0,W^1,\mu,\tau^*)\right] = 0.
\end{align*}
\hfill \qedsymbol

\subsection*{Proof of Lemma \ref{le:glivenko-cantelli}}
Fix $\epsilon > 0$. Find a finite set $0 = t_0 < t_1 < \cdots < t_N = \sup\T$ such that $m(t_k,t_{k+1}) \le \epsilon$ for every $k=0,\ldots,N-1$. Consider the $\sigma(\P(\T),B(\T))$-open neighborhood $U$ of $m_0$ given by
\begin{align*}
U = \left\{m \in \P(\T) : |m(t_k,t_{k+1})  - m_0(t_k,t_{k+1})| \vee |m[0,t_k] - m_0[0,t_k]|  < \epsilon, \ \forall k=0,\ldots,N-1\right\}.
\end{align*}
For $m \in U$ and $t \in (t_k,t_{k+1})$ we have
\begin{align*}
|m[0,t] - m_0[0,t]| &\le |m[0,t] - m[0,t_k]|+ |m[0,t_k] - m_0[0,t_k]| + |m_0[0,t_k] - m_0[0,t]| \\
	&\le m(t_k,t] + m_0(t_k,t]+ |m[0,t_k] - m_0[0,t_k]| \\
	&\le m(t_k,t_{k+1}) + m_0(t_k,t_{k+1}) + |m[0,t_k] - m_0[0,t_k]| \\
	&\le 3\epsilon.
\end{align*}
Setting $\pi = \{t_0,\ldots,t_{N-1}\}$, we have, for $m \in U$,
\begin{align*}
\sup_{t \in \T}|m[0,t] - m_0[0,t]| &= \sup_{t \notin \pi}|m[0,t] - m_0[0,t]| \vee \max_{t \in \pi}|m[0,t] - m_0[0,t]| \le 3\epsilon.
\end{align*}
\hfill\qedsymbol

\subsection*{Existence under supermodularity}

In this section, we prove Theorem \ref{th:existence-B}.
Let $\SS$ denote the set of (equivalence classes of a.s. equal) $\FF^{\mathrm{sig}}$-stopping times, and let $\M$ denote the set of (equivalence classes of a.s. equal) $\P(\T)$-valued random variables $\mu$, which are $\FF^{\mathrm{com}}$-adapted in the sense that $\mu[0,t]$ is a.s. $\F^{\mathrm{com}}_t$-measurable for each $t$. 
Equip $\SS$ with the almost sure partial order, meaning that we interpret the inequality $\tau \le \tau'$ as holding almost surely.
Equip $\M$ with the almost sure stochastic order, meaning that $\mu' \ge \mu$ if and only if $\mu'[0,t] \le \mu[0,t]$ a.s. for each $t \in [0,T]$, and note that right-continuity renders the order of quantifiers inconsequential.
Note that  $\M$ is a  lattice, namely a partially ordered set in which every two elements have a unique least upper bound and a unique greatest lower bound; for example $\mu \vee \mu'$ is the random measure defined by $(\mu \vee \mu')[0,t] = \mu[0,t] \wedge \mu'[0,t]$. On the other hand, $\SS$ is a \emph{complete} lattice in the sense that it is a partially ordered set in which every subset has both a supremum and an infimum. Indeed, the notion of ``essential supremum'' provides the correct supremum operation on $\SS$, and the completeness follows from the assumption (B.1) that the filtration $\FF^{\mathrm{sig}}$ is right-continuous. The completeness of the lattice of stopping times is surely known, but we prove it in the Appendix (Theorem \ref{th:stoppingtime-completelattice}) as we were unable to locate a precise reference.

Now define $J : \M \times \SS \rightarrow \R$ by
\[
J(\mu,\tau) = \E[F(B,W,\mu,\tau)].
\]
Note that $J(\mu,\tau)$ is trivially supermodular in $\tau$, in the sense that 
\[
J(\mu,\tau \vee \tau') + J(\mu,\tau \wedge \tau') \ge J(\mu,\tau) + J(\mu,\tau'),
\]
for every $\mu \in \M$ and every pair $\tau,\tau' \in \SS$. In fact, this holds with equality, which follows from taking expectations on both sides of the identity
\begin{align*}
F(B,W,\mu,\tau \vee \tau') + F(B,W,\mu,\tau \wedge \tau') = F(B,W,\mu,\tau) + F(B,W,\mu,\tau').
\end{align*}
Moreover, assumption (B.2) ensures that $J$ has increasing differences with respect to $\mu$, in the sense that
\[
J(\mu',\tau') - J(\mu',\tau) \ge J(\mu,\tau') - J(\mu,\tau),
\]
whenever $\tau,\tau' \in \SS$ and $\mu,\mu' \in \M$ satisfy $\tau \le \tau'$ and $\mu \le \mu'$. 
From Topkis's monotonicity theorem \cite{milgrom1990rationalizability}, we deduce that the set-valued map
\[
\Phi(\mu) :=  \arg\max_{\tau \in \SS}J(\mu,\tau)
\]
is increasing in the strong set order, meaning that whenever $\mu,\mu' \in \M$ satisfy $\mu \le \mu'$, and whenever $\tau \in \Phi(\mu)$ and $\tau' \in \Phi(\mu')$, we have $\tau \vee \tau' \in\Phi(\mu')$ and $\tau \wedge \tau' \in \Phi(\mu)$.
It is readily checked that $J$ is order upper semicontinuous in $\tau$, using the assumed upper semicontinuity of $F$ in $\tau$ along with Fatou's lemma, justified by the integrability assumption \eqref{def:A.5}.
By \cite[Theorem 1]{milgrom1990rationalizability}, this implies that for every $\mu$, $\Phi(\mu)$ is a nonempty complete lattice. In particular, it has a maximum, which we denote $\phi^*(\mu)$ and a minimum which we denote by $\phi_*(\mu)$. Note that $\phi^* : \M \rightarrow \SS$ is increasing in the sense that $\mu' \ge \mu$ implies $\phi^*(\mu') \ge \phi^*(\mu)$. 
Moreover, it is plain to check that the function $\psi : \SS \rightarrow \M$ defined by $\psi(\tau) = \text{Law}(\tau | W^0)$ is monotone. Thus $\phi^* \circ \psi$ is a monotone map from $\SS$ to itself, and since $\SS$ is a complete lattice we conclude from Tarski's fixed point theorem that there exists $\tau$ such that $\tau = \phi^*(\psi(\tau))$. It is readily verified that any such fixed point $\tau$ is a strong MFE, in the sense of Definition \ref{def:strongMFE}.

Now, under the additional assumption \textbf{C}, we prove the last claim in the statement of Theorem \ref{th:existence-B}.
Define $\tau_0 \equiv \sup\T$, and by induction $\tau_n=\phi^*\circ\psi(\tau_{n-1})$ for $n \ge 1$. 
Clearly, $\tau_1\le \tau_0$. Now if we assume $\tau_n\le \tau_{n-1}$, then the monotonicity of $\phi^*\circ\psi$ proved earlier implies $\tau_{n+1}=\phi^*\circ\psi(\tau_n)\le\phi^*\circ\psi(\tau_{n-1})=\tau_n$. If we define $\tau^*$ as the a.s. limit of the nonincreasing sequence $(\tau_n)_{n\ge 1}$ of stopping times, then $\tau^*\in \SS$ because the lattice $\SS$ is complete (see Theorem \ref{th:stoppingtime-completelattice}). We claim that $\tau^*$ is a MFE. Note that $\psi(\tau_n) \rightarrow \psi(\tau^*)$ weakly almost surely, because $\tau_n \rightarrow \tau^*$. The assumption (C.1) of joint continuity of $F=F(\omega^0,\omega^1,m,t)$ in $(m,t)$, along with the uniform integrability assumption (C.2) ensure by dominated convergence that
\begin{align*}
J(\psi(\tau^*),\tau^*) = \lim_{n\rightarrow\infty}J(\psi(\tau_n),\tau_{n+1}).
\end{align*}
Moreover, for any $\sigma \in \SS$, the fact that $\tau_{n+1} \in \Phi(\psi(\tau_n))$ implies
\begin{align*}
J(\psi(\tau_n),\tau_{n+1}) \ge J(\psi(\tau_n),\sigma).
\end{align*}
Pass to the limit on both sides to get
\begin{align*}
J(\psi(\tau^*),\tau^*) \ge J(\psi(\tau^*),\sigma).
\end{align*}
This shows $\tau^* \in \Phi(\psi(\tau^*))$, and in particular $\tau^*$ is a MFE.

Similarly, define $\theta_0 \equiv 0$, and by induction $\theta_n=\phi_*\circ\psi(\theta_{n-1})$ for $n \ge 1$. 
Clearly, $\theta_1\ge \theta_0$, and as above, we prove by induction that $\theta_n\ge \theta_{n-1}$.
Next, we define $\theta^*$ as the a.s. limit of the nondecreasing sequence $(\theta_n)_{n\ge 1}$ of stopping times. Conclude as before that $\theta^*$ is a MFE.

Finally, it is plain to check that if $\tau$ is any MFE, it is a fixed point of the set-valued map $\Phi \circ \psi$, in the sense that $\tau \in \Phi(\psi(\tau))$. Trivially, $\theta_0=0\le\tau\le \sup\T = \tau_0$. Applying $\phi_*\circ\psi$ and $\phi^*\circ\psi$ repeatedly to the left and right sides, respectively, we conclude that $\theta_n \le \tau \le \tau_n$ for each $n$, and thus $\theta^* \le \tau \le \tau^*$.
\hfill\qedsymbol

\begin{remark}
The above proof shows that, under the full continuity assumption, there is no need to use Tarski's theorem to prove existence, as the MFEs $\tau^*$ and $\theta^*$ are constructed inductively.
\end{remark}


\section{Compatibility and the density of non-randomized stopping times} \label{se:compatibility}
This section elaborates on the crucial notion of \emph{compatibility} introduced in property (3) of Definition \ref{def:weakMFE} and, in doing so, takes some first steps toward proving the results announced in Section \ref{se:weakEQ}.
Here, our goal is to discuss some important facts about these compatibility properties, namely how to approximate compatible (randomized) stopping times with nonrandomized stopping times.
Essentially, this has to do with filtration enlargements. To say that $\F^\tau_{t+}$ is conditionally independent of $\F^{B,W,\mu}_T$ given $\F^{B,W,\mu}_{t+}$ is the same as saying that $\F^{B,W,\mu,\tau}_{t+}$ is conditionally independent of $\F^{B,W,\mu}_T$ given $\F^{B,W,\mu}_{t+}$. To say that this holds for every $t \in [0,T)$, it turns out, is equivalent to saying that every $\FF^{B,W,\mu}_+$-martingale remains a $\FF^{B,W,\mu,\tau}_+$-martingale.
Many different names and characterizations are associated to this property of a filtration enlargement, such as the H-hypothesis \cite{bremaudyor-changesoffiltrations}, immersion \cite{jeanblanc2009immersion}, very good extensions \cite{jacodmemin-weaksolution}, and natural extensions \cite{elkaroui-compactification}, while we borrow the term \emph{compatible} from Kurtz \cite{kurtz-yw2013}, to be consistent with other works on mean field games \cite{lacker-mfglimit,carmonadelaruelacker-mfgcommonnoise, CarmonaDelarue}.
Before we proceed, we recall a useful result on weak convergence which will be used repeatedly:

\begin{lemma}[Corollary 2.9 and Theorem 2.16 of \cite{jacodmemin-stable}] \label{le:jacodmemin}
Suppose $E$ and $E'$ are Polish spaces. Suppose $P_n,P \in \P(E \times E')$ satisfy $P_n \rightarrow P$, and suppose that every $P_n$ has the same $E$-marginal. That is, $P_n(\cdot \times E')$ does not depend on $n$. Then, for every bounded measurable function $\phi : E \times E' \rightarrow \R$ such that $\phi(x,\cdot)$ is continuous on $E'$ for $\mu$-almost every $x \in E$, we have
\[
\int \phi\,dP_n \rightarrow \int \phi\,dP.
\]
\end{lemma}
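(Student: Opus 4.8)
The plan is to approximate the generic bounded, separately-continuous integrand $\phi$ in $L^1(P_n)$, uniformly in $n$, by a genuinely bounded continuous function, for which the claimed convergence is just the portmanteau theorem. First I would record two preliminaries. Writing $\mu$ for the common $E$-marginal of the $P_n$, continuity of the coordinate projection $\pi : E \times E' \to E$ forces $P$ to have $E$-marginal $\mu$ as well. And by Prokhorov's theorem the weakly convergent family $\{P_n\} \cup \{P\}$ is tight, so for each $\epsilon > 0$ there is a compact $Q \subset E \times E'$ with $P_n(Q^c), P(Q^c) < \epsilon$; letting $K \subset E$ and $L \subset E'$ be the (compact) projections of $Q$, we get $\mu(K^c) < \epsilon$ and $P_n(E \times L^c), P(E \times L^c) < \epsilon$ for all $n$.

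The single place where the hypothesis of a common marginal enters is the trivial bound: for bounded measurable $\psi$ and Borel $A \subset E$, $\int_{A^c \times E'}|\psi|\,dP_n \le \|\psi\|_\infty\,\mu(A^c)$, which is independent of $n$ (and the same bound holds for $P$). Consequently it suffices to produce, for each $\epsilon$, a compact $K'' \subset K$ with $\mu(K \setminus K'') < \epsilon$ and a function $\psi \in C_b(E \times E')$ with $\|\psi\|_\infty \le \|\phi\|_\infty$ that agrees with $\phi$ on $K'' \times L$: indeed then $\int|\phi - \psi|\,dP_n \le 2\|\phi\|_\infty\big(\mu((K'')^c) + P_n(E \times L^c)\big) \le 6\|\phi\|_\infty\,\epsilon$ uniformly in $n$, the same holds under $P$, and combining with $\int\psi\,dP_n \to \int\psi\,dP$ and letting $\epsilon \downarrow 0$ closes the argument.

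To construct $\psi$: after redefining $\phi(x,\cdot)\equiv 0$ on the $\mu$-null set of $x$ where continuity fails — which changes no $P_n$- or $P$-integral — the map $\Phi : K \to C(L)$, $\Phi(x) = \phi(x,\cdot)|_L$, takes values in the \emph{separable} Banach space $C(L)$ (as $L$ is compact metric), and it is Borel, since $x \mapsto \|\Phi(x) - g\|_\infty = \sup_{y \in D}|\phi(x,y) - g(y)|$ is measurable for every $g \in C(L)$, with $D$ a countable dense subset of $L$. By Lusin's theorem — valid for measurable maps into separable metric spaces since $\mu$ is tight — there is a compact $K'' \subset K$ with $\mu(K \setminus K'') < \epsilon$ on which $\Phi$ is continuous; continuity of $\Phi$ into $(C(L),\|\cdot\|_\infty)$ makes $(x,y) \mapsto \phi(x,y)$ jointly continuous on the compact $K'' \times L$, and Tietze's theorem supplies the required extension $\psi$ (the sup-norm bound being preserved by truncation).

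The step I expect to be the real obstacle is precisely the construction of $\psi$ in the non-compact regime: $C_b(E')$ need not be separable when $E'$ is non-compact, so Lusin's theorem cannot be applied directly to $x \mapsto \phi(x,\cdot) \in C_b(E')$ — this is why the uniform-in-$n$ tightness reduction to the compact $L$, available only because $\{P_n\}$ converges, is essential. Secondary technical care is needed to check the Borel measurability of the $C(L)$-valued map $\Phi$ and the applicability of Lusin's theorem to a separable-metric-space-valued (rather than scalar) function. As a sanity check one can first run the whole scheme on product integrands $\phi(x,y) = f(x)g(y)$ with $f$ bounded measurable and $g \in C_b(E')$: there Lusin is applied to $f$ directly, a Tietze extension $\tilde f \in C_b(E)$ of $f$ off a small-$\mu$ set is used, and $\tilde f g \in C_b(E \times E')$ plays the role of $\psi$, exhibiting the marginal-constraint trick in its simplest form.
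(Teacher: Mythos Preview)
Your argument is correct. The paper does not give its own proof of this lemma at all: it is simply stated with attribution to Corollary~2.9 and Theorem~2.16 of Jacod and M\'emin, where it appears as a byproduct of the general theory of \emph{stable} convergence (weak convergence with a fixed marginal being characterized there as stable convergence, which by definition permits integration against bounded functions that are merely measurable in the fixed-marginal coordinate). Your route is genuinely different and more elementary: you bypass the stable-convergence framework entirely and instead reduce to the portmanteau theorem via a Lusin--Tietze approximation, with the fixed-marginal hypothesis entering only through the uniform-in-$n$ control of $\int_{A^c \times E'}|\psi|\,dP_n$. The Jacod--M\'emin viewpoint situates the result in a broader theory and yields further generalizations, while your argument has the virtue of being fully self-contained and exhibits clearly why the common-marginal assumption is exactly what is needed.
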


Of utmost importance to us is the behavior of compatibility under weak limits of the underlying probability measures. The key result is the following, which says roughly that a compatible process is the weak limit of adapted processes. 

\begin{proposition} \label{pr:density-discrete}
Let $\X$ and $\Y$ be Polish spaces, with $\X$ homeomorphic to a convex subset of a locally convex space. Let $Y=(Y_1,\ldots,Y_N)$ and $X=(X_1,\ldots,X_N)$ be $\Y$- and $\X$-valued stochastic processes, respectively, defined on a common probability space. For $R \in \{X,Y\}$, let $\F^R_n = \sigma\{R_1,\ldots,R_n\}$ denote the filtration generated by $R$. Assume that the law of $Y_1$ is nonatomic. Lastly, assume that $X$ is compatible with $Y$ in the sense that $\F^X_n$ is conditionally independent of $\F^Y_N$ given $\F^Y_n$, for each $n=1,\ldots,N$. 
Then there exist continuous functions $h^j_k : \Y^k \rightarrow \X$, for $k \in \{1,\ldots,N\}$ and $j \ge 1$, such that
\[
(Y,(h^j_1(Y_1),h^j_2(Y_1,Y_2),\ldots,h^j_N(Y_1,\ldots,Y_N))) \rightarrow (Y,X)
\]
in law in the space $\Y^N \times \X^N$, as $j\rightarrow\infty$.
In particular, there exist $Y$-adapted $\X$-valued processes $X^j=(X^j_1,\ldots,X^j_N)$ such that $(Y,X^j) \Rightarrow (Y,X)$.
\end{proposition}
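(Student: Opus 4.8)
The plan is to build the approximating maps $h^j_k$ one time-step at a time, exploiting the compatibility hypothesis to decompose the joint law of $(Y,X)$ into a chain of regular conditional kernels, and then approximating each kernel by a deterministic continuous function of the past $\Y$-coordinates using a randomization device extracted from $Y_1$. The starting observation is that, by compatibility, the conditional law of $X_k$ given $\F^Y_N \vee \F^X_{k-1}$ coincides with its conditional law given $\F^Y_k \vee \F^X_{k-1}$; and then, since we are free to choose the earlier $X_\ell$'s ourselves, it suffices to produce, for each $k$, a continuous map $\Y^k \to \X$ whose pushforward of $\mathrm{Law}(Y_1,\dots,Y_k)$ approximates the relevant joint law $\mathrm{Law}(Y_1,\dots,Y_k,X_1,\dots,X_k)$ well in the weak topology. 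I would set this up as an induction on $k$: assuming we have continuous $h^j_1,\dots,h^j_{k-1}$ with $(Y_1,\dots,Y_{k-1},h^j_1(Y_1),\dots) \Rightarrow (Y_1,\dots,Y_{k-1},X_1,\dots,X_{k-1})$, we append a new coordinate by approximating the conditional kernel of $X_k$ given the past.

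The key device for converting a \emph{random} kernel into a \emph{deterministic continuous function} is the nonatomicity of $\mathrm{Law}(Y_1)$: there is a measurable map $\xi : \mathrm{supp}(\mathrm{Law}(Y_1)) \to [0,1]$ pushing $\mathrm{Law}(Y_1)$ forward to the uniform distribution on $[0,1]$, and this uniform random variable, being a measurable function of $Y_1$ alone, is available as ``extra randomness'' at every time step without disturbing $Y$-adaptedness. Using $\xi(Y_1)$ together with the inverse-CDF / Skorokhod representation for $\X$-valued kernels (here is where I would invoke that $\X$ is homeomorphic to a convex subset of a locally convex space, which lets one build the kernel's conditional quantile/selection map with the right measurability), one writes $X_k$ in distribution as $g_k(Y_1,\dots,Y_k,\xi(Y_1))$ for a measurable $g_k$, hence as a \emph{measurable} function of $(Y_1,\dots,Y_k)$ alone. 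Then a standard approximation argument --- Lusin's theorem plus Tietze extension (valid because $\X$ embeds in a locally convex space, so continuous $\X$-valued extensions exist), or mollification --- produces continuous $h^j_k : \Y^k \to \X$ with $h^j_k(Y_1,\dots,Y_k) \to g_k(Y_1,\dots,Y_k)$ in probability, and therefore in law jointly with $(Y,X_1,\dots,X_{k-1})$. Combining with the inductive hypothesis and passing to a diagonal subsequence over the $j$'s at the different levels $k$ gives the claimed joint convergence in $\Y^N \times \X^N$; the final sentence of the statement is then immediate by taking $X^j_k = h^j_k(Y_1,\dots,Y_k)$.

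The main obstacle I anticipate is the bookkeeping around \emph{which} conditional law to approximate at step $k$ so that the approximation is compatible with the freedom we have already used at steps $1,\dots,k-1$. One cannot simply approximate $\mathrm{Law}(X_k \mid Y_1,\dots,Y_k,X_1,\dots,X_{k-1})$ with the true $X_\ell$'s plugged in, because in the approximating sequence $X_\ell$ is replaced by $h^j_\ell(Y_1,\dots,Y_\ell)$, so the conditioning variable has moved. The clean way to handle this, which I would carry out carefully, is to argue that it suffices to match the \emph{joint} law $\mathrm{Law}(Y_1,\dots,Y_k,X_1,\dots,X_k)$ at each level --- not the conditional kernels separately --- and to note that by compatibility this joint law is the pushforward of $\mathrm{Law}(Y_1,\dots,Y_k)$ under a single measurable map $(Y_1,\dots,Y_k) \mapsto (Y_1,\dots,Y_k, G_1(Y_1),\dots,G_k(Y_1,\dots,Y_k))$ built from $\xi(Y_1)$; once everything is expressed as a pushforward of the fixed measure $\mathrm{Law}(Y_1,\dots,Y_k)$ under a measurable map into a space where continuous approximation of measurable maps is available, the convergence $h^j \to G$ in $\mathrm{Law}(Y_1,\dots,Y_k)$-probability yields weak convergence of the pushforwards automatically. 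A secondary technical point is ensuring the regular conditional distributions and the quantile-type selections exist with the stated continuity/measurability, which is where the Polish and local-convexity hypotheses are genuinely used; I would also invoke Lemma \ref{le:jacodmemin} at the end to upgrade, if needed, from convergence against continuous test functions to the statements about $F$ used elsewhere, though for the present proposition plain weak convergence in $\Y^N \times \X^N$ is all that is asserted.
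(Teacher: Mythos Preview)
Your proposal has a genuine gap at its central step. You claim that, because $\mathrm{Law}(Y_1)$ is nonatomic, the random variable $\xi(Y_1)$ (uniform on $[0,1]$) can serve as ``extra randomness'' in a Skorokhod-type representation $X_k \stackrel{d}{=} g_k(Y_1,\dots,Y_k,\xi(Y_1))$, jointly with $Y$. But $\xi(Y_1)$ is $\sigma(Y_1)$-measurable, so $g_k(Y_1,\dots,Y_k,\xi(Y_1))$ is a deterministic measurable function of $(Y_1,\dots,Y_k)$. For such a representation to hold jointly with $(Y_1,\dots,Y_k)$, the conditional law of $X_k$ given $\F^Y_k$ would have to be a Dirac almost surely, which is not assumed and is generally false. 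The Skorokhod/noise-transfer trick requires the uniform variable to be \emph{independent} of the conditioning variables; a function of $Y_1$ provides no randomness beyond $Y_1$ itself. Consequently your Lusin/Tietze step has nothing valid to approximate, and the resolution you propose for the ``conditioning variable has moved'' issue still rests on this nonexistent representation.

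The paper avoids this trap by never attempting to write $X_{n+1}$ as a function of $Y^{n+1}$ alone. Instead it applies the static result (Proposition~\ref{pr:density-simple}) to the pair $\bigl((Y^{n+1},X^n),\,X_{n+1}\bigr)$, producing continuous $\hat g^j$ with $(Y^{n+1},X^n,\hat g^j(Y^{n+1},X^n))\Rightarrow(Y^{n+1},X^{n+1})$; compatibility and Lemma~\ref{le:jacodmemin} then upgrade this to convergence jointly with the full $Y=Y^N$. Only afterwards are the true $X^n$ substituted by the previously built approximations $g^{j}_1(Y^1),\dots,g^{j}_n(Y^n)$, and this substitution is legitimate precisely because $\hat g^j$ is continuous. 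A diagonal argument finishes. Nonatomicity of $Y_1$ is indeed the engine, but it is used inside Proposition~\ref{pr:density-simple} (via fine partitions of the support and pushforwards on each cell), not through a global uniform-coupling as you describe.
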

\begin{proof}
See Appendix \ref{se:adapted-density-proof}.
\end{proof}

\subsection{Randomized stopping times} \label{se:randomizedstoppingtimes}

This section is devoted to some compactness properties of randomized stopping times, analogous to, but extending results of Baxter and Chacon \cite{baxter-chacon}.
Abbreviate
\begin{align*}
\Omega_{\mathrm{input}} = \C^2 \times \P(\C \times [0,T]).
\end{align*}
For this section, fix a measure $\rho \in \P(\Omega_{\mathrm{input}})$, to represent a joint law of $(B,W,\mu)$, and assume, under $\rho$, that $(B,W)$ are Wiener processes with respect to the filtration $\FF^{B,W,\mu}$ (and thus also with respect to the right-filtration $\FF^{B,W,\mu}_+$). Note that $\Omega = \Omega_{\mathrm{input}} \times [0,T]$.

We next define three sets of probability measures on $\Omega$, corresponding to various notions of (randomized) stopping times:
\begin{itemize}
\item $\RC^+(\rho)$ is the set of joint laws $P \in \P(\Omega)$ with $\Omega_{\mathrm{input}}$-marginal $\rho$ such that $\F^\tau_{t+}$ is conditionally independent of $(B,W,\mu)$ given $\F^{B,W,\mu}_{t+}$ for every $t \in [0,T)$. That is, $\RC^+(\rho)$ is the set of $P$ satisfying the compatibility property (3) of Definition \ref{def:weakMFE}.
\item $\RC(\rho)$ is the set of joint laws $P \in \P(\Omega)$ with $\Omega_{\mathrm{input}}$-marginal $\rho$ such that $\F^\tau_t$ is conditionally independent of $(B,W,\mu)$ given $\F^{B,W,\mu}_t$, for every $t \in [0,T)$.
\item $\RC_0(\rho)$ is the set of $P \in \P(\Omega)$ with $\Omega_{\mathrm{input}}$-marginal $\rho$ under which $\tau$ is a stopping time relative to the $P$-completion of $\FF^{B,W,\mu}$, and moreover $\tau$ is of the form $\tau = \hat{\tau}(B,W,\mu)$ for some \emph{continuous} function $\hat{\tau} : \Omega_{\mathrm{input}} \rightarrow [0,T]$.
\end{itemize}
Both sets $\RC^+(\rho)$ and $\RC(\rho)$ represent slightly different notions of randomized stopping time, though we will soon see that $\RC^+(\rho)=\RC(\rho)$.
On the other hand, $\RC_0(\rho)$ should be seen as the set of (joint laws of) bona fide $\FF^{B,W,\mu}$-stopping times, with the useful additional property that $\tau$ can be written as a continuous function of $(B,W,\mu)$.

\begin{remark} \label{re:wiener-filtration}
Suppose $\rho(db,dw,dm) = \W(db)\W(dw)\delta_{\widehat{m}(b)}(dm)$ for some measurable function $\widehat{m} : \C \rightarrow \P(\C \times [0,T])$. Suppose $\widehat{m}$ is \emph{adapted} in the sense that $b \mapsto \widehat{m}(b)(C)$ is $\F^B_t$-measurable whenever $C \in \F^{W,\tau}_t$, for $t \in [0,T]$. Then, under $\rho$, $\F^{B,W,\mu}_t=\F^{B,W}_t$ a.s., for every $t$. It is then easy to argue that $\RC^+(\rho)$ (resp. $\RC(\rho)$) is precisely the set of joint laws $P \in \P(\Omega)$ with $\Omega_{\mathrm{input}}$-marginal $\rho$ such that $(B,W)$ is a Wiener process with respect to the full filtration $\FF^{B,W,\mu,\tau}_+$ (resp. $\FF^{B,W,\mu,\tau}$). Indeed, because $\F^{B,W}_T$ can be split into two independent parts, $\F^{B,W}_T = \sigma\{(B_s-B_t,W_s-W_t) : s \in [t,T]\} \vee \F^{B,W}_t$, it holds that $\F^\tau_t$ is conditionally independent of $\F^{B,W}_T$ given $\F^{B,W}_t$ if and only if $\F^{B,W,\tau}_t$ is independent of $\sigma\{(B_s-B_t,W_s-W_t) : s \in [t,T]\}$.
\end{remark}

\begin{theorem} \label{th:nonradomizeddensity2}
For $\rho$ as above, $\RC^+(\rho)$ is convex and compact and equals the closure of $\RC_0(\rho)$. Moreover,  $\RC^+(\rho) = \RC(\rho)$.
\end{theorem}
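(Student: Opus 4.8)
The four assertions---convexity, compactness, $\RC^+(\rho)=\overline{\RC_0(\rho)}$, and $\RC^+(\rho)=\RC(\rho)$---will follow once we establish: (i) a description of $\RC^+(\rho)$ and of $\RC(\rho)$ by countably many \emph{affine} constraints on $P$; (ii) closedness of both sets under weak convergence, via Lemma \ref{le:jacodmemin}; and (iii) the density $\RC^+(\rho)\subseteq\overline{\RC_0(\rho)}$, which rests on Proposition \ref{pr:density-discrete}. Indeed, a bona fide $\FF^{B,W,\mu}$-stopping time satisfies $\F^\tau_t\subseteq$ the $P$-completion of $\F^{B,W,\mu}_t$, so $\RC_0(\rho)\subseteq\RC(\rho)$; conversely, a reverse (downward) martingale convergence argument, letting a rational $s\downarrow t$ in the relation $\F^\tau_s$ is conditionally independent of $\F^{B,W,\mu}_T$ given $\F^{B,W,\mu}_s$, gives $\RC(\rho)\subseteq\RC^+(\rho)$. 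Hence, granting (ii) and (iii),
\[
\RC^+(\rho)\ \subseteq\ \overline{\RC_0(\rho)}\ \subseteq\ \overline{\RC(\rho)}\ =\ \RC(\rho)\ \subseteq\ \RC^+(\rho) ,
\]
so all four sets coincide and equal $\overline{\RC_0(\rho)}$; compactness then follows from closedness plus relative compactness, and convexity from (i).

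\textbf{Convexity and compactness.} Write $\F^{B,W,\mu}_T=\sigma(B,W,\mu)$, and note that $\F^\tau_{t+}$ is generated by a countable $\pi$-system of events of the form $\{\tau\le s\}$ with $s\le t$. Using these facts one checks that a measure $P$ with $\Omega_{\mathrm{input}}$-marginal $\rho$ lies in $\RC^+(\rho)$ if and only if, for every rational $t$, every such generating event $\{\tau\le s\}$, every bounded $\F^{B,W,\mu}_{t+}$-measurable $h$ on $\Omega_{\mathrm{input}}$, and every bounded measurable $g$ on $\Omega_{\mathrm{input}}$,
\[
\E^{P}\bigl[1_{\{\tau\le s\}}\bigl(g-\E^{\rho}[\,g\,|\,\F^{B,W,\mu}_{t+}\,]\bigr)h\bigr]=0 ,
\]
and similarly for $\RC(\rho)$ with $t+$ replaced by $t$. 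Since the $\Omega_{\mathrm{input}}$-marginal is frozen at $\rho$, the inner conditional expectation does not depend on $P$, so each constraint is affine in $P$; thus $\RC^+(\rho)$ and $\RC(\rho)$ are intersections of the affine slice $\{P:P\circ(B,W,\mu)^{-1}=\rho\}$ with countably many hyperplanes, hence convex. For compactness, observe first that $\{P:P\circ(B,W,\mu)^{-1}=\rho\}$ is tight, being contained in the set of couplings of the fixed measure $\rho$ with probability measures on the compact set $[0,T]$; so both sets are relatively compact by Prokhorov. Closedness is obtained by passing to the weak limit in the constraints: replacing $1_{\{\tau\le s\}}$ by a continuous $\varphi:[0,T]\to[0,1]$ equal to $1$ on $[0,s]$ and supported in $[0,s']$ for some $s'>s$, the resulting integrand on $\Omega_{\mathrm{input}}\times[0,T]$ is bounded and, for each fixed point of $\Omega_{\mathrm{input}}$, continuous in $\tau$; since all the measures in a convergent sequence share the $\Omega_{\mathrm{input}}$-marginal $\rho$, Lemma \ref{le:jacodmemin} applies and lets us pass to the limit, and then $s'\downarrow s$ (dominated convergence) recovers the defining constraints. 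Hence $\RC^+(\rho)$ and $\RC(\rho)$ are compact.

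\textbf{Density --- the main step.} It remains to prove $\RC^+(\rho)\subseteq\overline{\RC_0(\rho)}$. First reduce to discrete time: for $P\in\RC^+(\rho)$ and the dyadic grid $\{t^n_k=kT/2^n\}$, set $\tau^{(n)}=\min\{t^n_k:t^n_k\ge\tau\}$; one verifies $\F^{\tau^{(n)}}_{t+}\subseteq\F^{\tau}_{t+}$, so $P\circ(B,W,\mu,\tau^{(n)})^{-1}\in\RC^+(\rho)$, while $\tau^{(n)}\downarrow\tau$ makes these laws converge to $P$. A discrete compatible randomized stopping time supported on the grid is completely encoded by its conditional c.d.f.\ process $X_k=P(\tau\le t^n_k\,|\,\F^{B,W,\mu}_{t^n_k})$, which compatibility identifies with $P(\tau\le t^n_k\,|\,\F^{B,W,\mu}_T)$ and which is therefore a nondecreasing $[0,1]$-valued $\FF^{B,W,\mu}$-adapted process with $X_{2^n}=1$; conversely every such process defines an element of $\RC^+(\rho)$. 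We now apply Proposition \ref{pr:density-discrete} with signal $Y_k=(B_{\cdot\wedge t^n_k},W_{\cdot\wedge t^n_k},\mu\circ\pi_{t^n_k}^{-1})$ --- whose time-one marginal is nonatomic because $B_{t^n_1}$ is Gaussian --- and with $\X$ the compact convex set of nondecreasing $[0,1]$-valued sequences (or $\X=[0,1]$, followed by a running-maximum clipping, which is continuous): this produces continuous, $(B,W,\mu)$-adapted processes $X^{(j)}\to X$ in law, jointly with $Y$, and the laws $P^{(j)}$ of the corresponding randomized stopping times lie in $\RC^+(\rho)$ and converge to $P$. Finally we \emph{derandomize} $P^{(j)}$ into a genuine stopping time that is still a \emph{continuous} function of $(B,W,\mu)$: the step-$k$ stopping probability dictated by $X^{(j)}$ is an $\F^{B,W,\mu}_{t^n_k}$-measurable continuous function of the path, and we realize it using the fresh uniform variable $\Phi\bigl((B_{r_k}-B_{s_k})/\sqrt{r_k-s_k}\bigr)$ extracted from a Brownian increment over an interval $[s_k,r_k]$ disjoint from the grid, where $\Phi$ is the standard normal c.d.f.---a continuous map, distributed uniformly on $[0,1]$ and independent of $\F^{B,W,\mu}_{s_k}$ since $(B,W)$ is Wiener for $\FF^{B,W,\mu}$. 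After a careful smoothing to keep the resulting stopping rule continuous in $(B,W,\mu)$, this produces a sequence in $\RC_0(\rho)$ whose laws still converge to $P$.

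\textbf{Main obstacle.} The one genuinely delicate point is this last derandomization-with-continuity step. Proposition \ref{pr:density-discrete} supplies adapted approximations only of the conditional c.d.f.\ \emph{process}, and turning these into a \emph{continuous-function} stopping time forces us both to simulate the required external randomization out of the nonatomic noise already carried by the Wiener path (this is exactly where having ``enough Brownian randomness'' in $\FF^{B,W,\mu}$, as in Remark \ref{re:wiener-filtration}, is essential) and to control the loss of continuity inherent in any ``first time such that $\ldots$'' construction, which is a priori only lower semicontinuous. Everything else---convexity, compactness, and the chain of inclusions above---is soft by comparison.
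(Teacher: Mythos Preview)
Your treatment of convexity, compactness, and the equality $\RC(\rho)=\RC^+(\rho)$ is essentially the paper's: you and the paper both exploit that the $\Omega_{\mathrm{input}}$-marginal is frozen at $\rho$ (so the inner conditional expectation is $P$-independent, making the constraints affine), both use Lemma \ref{le:jacodmemin} for closedness, and both pass from $\RC(\rho)$ to $\RC^+(\rho)$ by backward martingale convergence. The chain of inclusions you set up is clean and correct.

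The density step, however, departs from the paper and contains a genuine gap in the derandomization. Your plan is to approximate the \emph{conditional c.d.f.} process $X_k=P(\tau\le t^n_k\mid\F^{B,W,\mu}_{t^n_k})$ by continuous adapted functions $X^{(j)}$, and then realize the resulting randomized stopping rule using uniforms $U_k=\Phi\bigl((B_{r_k}-B_{s_k})/\sqrt{r_k-s_k}\bigr)$ extracted from the Wiener path. Two problems arise. First, independence: $U_k$ is independent of $\F^{B,W,\mu}_{s_k}$, but $X^{(j)}_k$ is $\F^{B,W,\mu}_{t^n_k}$-measurable and the output of Proposition \ref{pr:density-discrete} gives you no control over \emph{which} increments the continuous approximant uses; nothing prevents $X^{(j)}_k$ from depending on the very increment $(B_{r_k}-B_{s_k})$ you are trying to use as external randomization. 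Second, continuity: the rule ``stop at the first $k$ with $U_k\le$ (step-$k$ probability)'' is a threshold construction that is at best lower semicontinuous in $(B,W,\mu)$, and ``careful smoothing'' is not a proof---any smoothing alters the implemented conditional law and you must then argue that the perturbed law still converges to $P$. You correctly flag both issues in your final paragraph, but you do not resolve either.

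The paper sidesteps derandomization entirely by approximating a different object. Rather than the conditional c.d.f., it takes the \emph{indicator process} $H_t=1_{\{\tau\le t\}}$, discretizes it to $h_k=H_{t_k}$, and applies Proposition \ref{pr:density-discrete} with $X=(h_1,\ldots,h_K)$ (which is $\FF^\tau$-adapted) and $Y=S$ the signal. Compatibility of $\tau$ is precisely the hypothesis of Proposition \ref{pr:density-discrete}, and the output is a sequence of \emph{deterministic} continuous functions $g^n_k(S)$ with $(S,g^n_1(S),\ldots,g^n_K(S))\Rightarrow(S,h_1,\ldots,h_K)$. No external randomization is needed because the approximants are already signal-measurable. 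The remaining task---turning the approximated process back into a stopping time that depends continuously on $(B,W,\mu)$---is handled by a dedicated lemma (Lemma \ref{le:Phi-continuous}): the first-hitting-time map $\Phi(h)=\inf\{t:h(t)\ge 1/2\}\wedge T$ is continuous on the set $\SS$ of nondecreasing paths with a unique $1/2$-crossing, and the paper forces membership in $\SS$ by adding the strictly increasing perturbation $t/n$. This is the idea your sketch is missing: the right object to feed into Proposition \ref{pr:density-discrete} is the $\tau$-measurable indicator process, not the signal-measurable conditional c.d.f., and the continuity of the ``first time'' map is obtained not by smoothing but by restricting to monotone inputs with a unique level crossing.
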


Before turning to the proof, we state an immensely useful corollary:

\begin{corollary} \label{co:same-supremum}
Assume that $F$ is bounded and jointly measurable and that $t \mapsto F(b,w,m,t)$ is continuous, for every $m$ and $\W^2$-almost every $(b,w)$.
For $\rho$ as above, we have
\[
\sup_{P \in \RC^+(\rho)}\E^P[F(B,W,\mu^\tau,\tau)] = \sup_{P \in \RC_0(\rho)}\E^P[F(B,W,\mu^\tau,\tau)].
\]
\end{corollary}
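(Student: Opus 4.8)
The plan is to establish the two inequalities separately. Since $\RC_0(\rho) \subseteq \RC^+(\rho)$ — every $P \in \RC_0(\rho)$ has $\tau = \widehat\tau(B,W,\mu)$ measurable with respect to the $P$-completion of $\FF^{B,W,\mu}$, so $\F^\tau_{t+}$ is contained in $\F^{B,W,\mu}_{t+}$ and the compatibility condition (3) of Definition \ref{def:weakMFE} holds trivially — the inequality
\[
\sup_{P \in \RC_0(\rho)}\E^P[F(B,W,\mu^\tau,\tau)] \le \sup_{P \in \RC^+(\rho)}\E^P[F(B,W,\mu^\tau,\tau)]
\]
is immediate. Everything is in the reverse inequality.

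For that, fix $P \in \RC^+(\rho)$. By Theorem \ref{th:nonradomizeddensity2}, $\RC^+(\rho)$ is the closure of $\RC_0(\rho)$ in $\P(\Omega)$, and since $\Omega$ is Polish this closure is sequential, so we may pick $P_n \in \RC_0(\rho)$ with $P_n \Rightarrow P$. The decisive structural feature is that every $P_n$, and $P$, have the same $\Omega_{\mathrm{input}}$-marginal $\rho$. I would then invoke Lemma \ref{le:jacodmemin} with $E = \Omega_{\mathrm{input}} = \C^2 \times \P(\C \times [0,T])$, $E' = [0,T]$, and
\[
\phi\big((b,w,m),t\big) := F\big(b,w,\,m(\C\times\cdot),\,t\big),
\]
observing that $m \mapsto m(\C\times\cdot)$ is weakly continuous from $\P(\C\times[0,T])$ to $\P([0,T])$, so that $\phi$ is bounded and jointly measurable, and that for $\rho$-almost every $(b,w,m)$ the section $t \mapsto \phi((b,w,m),t)$ is continuous. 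The latter uses the hypothesis that $t \mapsto F(b,w,m',t)$ is continuous for every $m'$ and $\W^2$-a.e.\ $(b,w)$ (with a single $\W^2$-null exceptional set of $(b,w)$), together with the fact that $\rho$ has $\C^2$-marginal $\W^2$ since $(B,W)$ is a $\rho$-Wiener process. Lemma \ref{le:jacodmemin} then gives $\int\phi\,dP_n \to \int\phi\,dP$, i.e.\ $\E^{P_n}[F(B,W,\mu^\tau,\tau)] \to \E^{P}[F(B,W,\mu^\tau,\tau)]$. Because each $P_n \in \RC_0(\rho)$, this yields $\E^{P}[F(B,W,\mu^\tau,\tau)] \le \sup_{P' \in \RC_0(\rho)}\E^{P'}[F(B,W,\mu^\tau,\tau)]$, and taking the supremum over $P \in \RC^+(\rho)$ closes the argument.

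The main obstacle is precisely the failure of $F$ to be jointly continuous in $(m,t)$: had $F$ been jointly continuous, plain weak convergence $P_n \Rightarrow P$ would suffice, but here one must exploit the invariance of the $\Omega_{\mathrm{input}}$-marginal along the approximating sequence and appeal to the Jacod--M\'emin stable-convergence result rather than ordinary weak convergence of expectations. A secondary, purely measure-theoretic point needing a line of care is checking that $\phi$ is a legitimate bounded measurable function on $\Omega$ with $\rho$-a.s.\ continuous time-sections; this follows from joint measurability of $F$, weak continuity of the marginalization map $m \mapsto m(\C\times\cdot)$, and the identification of the $\C^2$-marginal of $\rho$ with $\W^2$.
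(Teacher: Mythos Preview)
Your proof is correct and follows essentially the same approach as the paper: density of $\RC_0(\rho)$ in $\RC^+(\rho)$ from Theorem \ref{th:nonradomizeddensity2}, combined with continuity of $P \mapsto \E^P[F(B,W,\mu^\tau,\tau)]$ via Lemma \ref{le:jacodmemin}, exploiting the fixed $\Omega_{\mathrm{input}}$-marginal $\rho$. You have spelled out in full the details the paper leaves implicit (the inclusion $\RC_0(\rho)\subset\RC^+(\rho)$, the precise choice of $E$, $E'$, $\phi$, and the verification that $\phi$ has $\rho$-a.s.\ continuous $t$-sections), but the underlying argument is the same.
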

\begin{proof}
By Theorem \ref{th:nonradomizeddensity2}, $\RC_0(\rho)$ is dense in $\RC^+(\rho)$. It suffices to show that $P \mapsto \E^P[F(B,W,\mu,\tau)]$ is continuous on $\RC^+(\rho)$. But this follows from the assumption on $F$ and from Lemma \ref{le:jacodmemin}.
\end{proof}

We precede the proof of Theorem \ref{th:nonradomizeddensity2} with a preparatory lemma, which allows us to map continuously between stopping times and c\`adl\`ag processes of a certain form. In the rest of this section, let $D = D([0,T];\R_+)$ denote the set of c\`adl\`ag functions (i.e., right-continuous with left limits) functions from $[0,T]$ to $\R_+=[0,\infty)$. Endow $D$ with the usual Skorohod $J_1$ topology. As usual, for $h \in D$, write $h(t-) = \lim_{s \uparrow t}h(s)$ for $t \in (0,T]$ and $h(0-)=h(0)$.

\begin{lemma} \label{le:Phi-continuous}
Define $\Phi : D \rightarrow [0,T]$ by 
\[
\Phi(h) = \inf\left\{t \ge 0 : h(t) \ge 1/2\right\} \wedge T.
\]
Let $\SS$ denote the set of nondecreasing $h \in D$ for which $h(t-)\le 1/2 \le h(t)$ implies $t = \Phi(h)$. Then $\Phi$ is continuous at each point of $\SS$.
\end{lemma}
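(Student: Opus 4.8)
The plan is to argue directly with the Skorokhod $J_1$ metric. Fix $h\in\SS$ and a sequence $h_n\to h$ in $D$, and choose increasing homeomorphisms $\lambda_n$ of $[0,T]$ with $\delta_n:=\sup_t|\lambda_n(t)-t|\to 0$ and $\eta_n:=\sup_t|h_n(\lambda_n(t))-h(t)|\to 0$; the goal is $\Phi(h_n)\to\Phi(h)=:t^*$ (which, $D$ being metrizable, is exactly continuity of $\Phi$ at $h$). Note $\lambda_n^{-1}$ also satisfies $\sup_u|\lambda_n^{-1}(u)-u|\le\delta_n$. Since $h$ is nondecreasing and right-continuous, the set $\{t:h(t)\ge 1/2\}$ is either empty (then $t^*=T$) or equal to $[t^*,T]$; in either case $h(s)<1/2$ for $s<t^*$, and by monotonicity $h(s)\le h(t)<1/2$ whenever $s\le t<t^*$, while $h(s)\ge 1/2$ for $s\in(t^*,T]$.

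First I would prove the lower bound $\liminf_n\Phi(h_n)\ge t^*$, which in fact holds for every nondecreasing $h$. If $t^*=0$ there is nothing to show; otherwise fix $\epsilon\in(0,t^*)$ and set $\gamma:=1/2-h(t^*-\epsilon)>0$. For $u\le t^*-2\epsilon$ and $n$ large enough that $\delta_n<\epsilon$, the point $v:=\lambda_n^{-1}(u)$ satisfies $v\le u+\delta_n\le t^*-\epsilon$, so $h(v)\le 1/2-\gamma$ and hence $h_n(u)=h_n(\lambda_n(v))\le h(v)+\eta_n\le 1/2-\gamma+\eta_n<1/2$ once $\eta_n<\gamma$. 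Thus $\{h_n\ge 1/2\}\subset(t^*-2\epsilon,T]$ for large $n$, giving $\Phi(h_n)\ge t^*-2\epsilon$; let $\epsilon\downarrow 0$.

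Next comes the upper bound $\limsup_n\Phi(h_n)\le t^*$, which is where membership in $\SS$ is essential and which I expect to be the crux. If $t^*=T$ there is nothing to prove, so assume $t^*<T$. The key is to reformulate the defining property of $\SS$: $h$ cannot equal $1/2$ identically on any interval $(t^*,t_1)$ with $t_1>t^*$, since at any interior point $t$ of such an interval one would have $h(t-)=1/2=h(t)$, forcing $t=\Phi(h)=t^*$, a contradiction. As $h\ge 1/2$ on all of $(t^*,T]$, this shows that for every $\epsilon\in(0,T-t^*)$ there is $s\in(t^*,t^*+\epsilon)$ with $h(s)>1/2$; putting $\gamma:=h(s)-1/2>0$ we get $h_n(\lambda_n(s))\ge h(s)-\eta_n>1/2$ for $n$ large, so $\lambda_n(s)\in\{h_n\ge 1/2\}$ and $\Phi(h_n)\le\lambda_n(s)\le s+\delta_n<t^*+\epsilon+\delta_n$. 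Letting $n\to\infty$ and then $\epsilon\downarrow 0$ finishes this bound, and combining the two halves yields $\Phi(h_n)\to\Phi(h)$.

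The only genuine obstacle is the upper bound together with the role of $\SS$. Being a first-hitting time of the closed set $[1/2,\infty)$, $\Phi$ is generically discontinuous in $J_1$: a path that rises to exactly $1/2$, stays flat there, and only later climbs strictly above $1/2$ is a $J_1$-limit of paths that do not reach $1/2$ until that later time (simply lower the flat part slightly), so $\Phi$ jumps at it. The set $\SS$ is designed to exclude precisely this ``flat stretch at level $1/2$ just after the hitting time,'' and once that feature is extracted as above the perturbation estimates are routine. The remaining bookkeeping --- the degenerate cases $t^*\in\{0,T\}$, the fact that the choice of $s\in(t^*,t^*+\epsilon)$ handles uniformly both a jump strictly across $1/2$ and a continuous arrival at $1/2$, and that $h_n$ itself need not be monotone (a property never used) --- does not affect the structure of the argument.
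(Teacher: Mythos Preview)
Your argument is correct. The approach, however, differs from the paper's. You work directly with the $J_1$ time--changes $\lambda_n$ and prove the two inequalities $\liminf_n\Phi(h_n)\ge t^*$ and $\limsup_n\Phi(h_n)\le t^*$ by elementary perturbation estimates, whereas the paper extracts a convergent subsequence $s_{n_k}=\Phi(h_{n_k})\to s$ and invokes the general Skorokhod--space fact (Ethier--Kurtz, Proposition 3.6.5) that the limit points of $h_{n_k}(s_{n_k})$ lie in $\{h(s-),h(s)\}$, then checks in three cases ($s\in(0,T)$, $s=T$, $s=0$) that $h(s-)\le 1/2\le h(s)$ and hence $s=\Phi(h)$. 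Your route is more self--contained (no auxiliary lemma on c\`adl\`ag evaluation at moving times is needed) and makes the role of the $\SS$ hypothesis very transparent: you reformulate it as ``$h$ has no flat stretch at level $1/2$ immediately to the right of $t^*$,'' which is exactly what is needed to find a nearby point where $h$ is strictly above $1/2$. The paper's route is shorter once the cited lemma is available, and it handles the boundary cases $s\in\{0,T\}$ by the same mechanism rather than by separate trivial observations. Both proofs use the $\SS$ condition only in the upper bound, and both correctly note that monotonicity of the $h_n$ is never invoked.
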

\begin{proof}
Let $h_n \rightarrow h$ in $D$, where $h \in \SS$. Let $s_n = \Phi(h_n)$, and note that $(s_n)_{n=1}^\infty$ is bounded. Let $(s_{n_k})_{k=1}^\infty$ denote any convergent subsequence, and let $s \in [0,T]$ denote its limit. Assume first that $0 < s < T$, so that without loss of generality we may take $0 < s_{n_k} < T$ for every $k$.
Because $h_{n_k} \rightarrow h$ and $s_{n_k} \rightarrow s$, it follows that $(h_{n_k}(s_{n_k}))_{k=1}^\infty$ is bounded, and its limit points are contained in $\{h(s-),h(s)\}$ (see \cite[Proposition 3.6.5]{ethier-kurtz}). Because $h_{n_k}(s_{n_k}) \ge 1/2$ for every $k$ and $h(s) \ge h(s-)$ (as $h \in \SS$), we conclude that $h(s) \ge 1/2$. On the other hand, for $\epsilon > 0$, $(h_{n_k}(s_{n_k}-\epsilon))_{k=1}^\infty$ is bounded, and its limit points are contained in $\{h((s-\epsilon)-),h(s-\epsilon)\}$. Because $h_{n_k}(s_{n_k}-\epsilon) < 1/2$ for every $k$ and $h((s-\epsilon)-) \le h(s-\epsilon)$, we conclude that $h(s-\epsilon) \le 1/2$. Sending $\epsilon \downarrow 0$ yields $h(s-) \le 1/2 \le h(s)$ and thus $s = \Phi(h)$.

Next, suppose $s = T$. Then again $(h_{n_k}(s_{n_k}-\epsilon))_{k=1}^\infty$ is bounded, and its limit points are contained in $\{h((T-\epsilon)-),h(T-\epsilon)\}$. Because $h_{n_k}(s_{n_k}-\epsilon) < 1/2$ for each $k$ and $h((T-\epsilon)-)\le h(T-\epsilon)$, we conclude that $h((T-\epsilon)-) \le 1/2$. This implies $h(T-) \le 1/2$, which is enough to show that $\Phi(h) = T$; indeed, either $h(T) \ge 1/2$, in which case $\Phi(h)=T$ because $h \in \SS$, or $h(T) < 1/2$, in which case $h(t) < 1/2$ for all $t \in [0,T]$ and again $\Phi(h) = T$.

Finally, suppose $s = 0$. Then $h_{n_k}(s_{n_k}) \rightarrow h(0)=h(0-)$, which shows $h(0) \ge 1/2$. Thus $\Phi(h)=0$.
\end{proof}

Before we begin the proof of Theorem \ref{th:nonradomizeddensity2}, notice that $\F^{B,W,\mu}_t = \sigma\{B_{\cdot \wedge t},W_{\cdot \wedge t},\mu^t\}$, where $m^t$ denotes the image of a measure $m\in \P(\C \times [0,T])$ through the map $\C \times [0,T] \ni (w,s) \mapsto (w_{\cdot \wedge t}, s \wedge t)$. This makes it clear that $\F^{B,W,\mu}_t$ is generated by the \emph{continuous} $\F^{B,W,\mu}_t$-measurable functions. Similarly, $\F^\tau_t = \sigma\{\tau \wedge t\}$ is generated by the \emph{continuous} $\F^\tau_t$-measurable functions.

\subsection*{Proof of Theorem \ref{th:nonradomizeddensity2}}
We break the proof up into four claims:

\textbf{$\bm{\RC(\rho)}$ is compact:}
Because the $\Omega_{\mathrm{input}}$-marginal of any element of $\RC(\rho)$ is $\rho$, and because $[0,T]$ is compact, it is immediate that $\RC(\rho)$ is tight.
To show $\RC(\rho)$ is closed, let $P_n \rightarrow P$ in $\P(\Omega_{\mathrm{input}})$, with $P_n \in \RC(\rho)$. Let $f_t$, $g_T$, and $g_t$ be bounded continuous functions on $[0,T]$, $\Omega_{\mathrm{input}}$, and $\Omega_{\mathrm{input}}$, respectively, and assume they are measurable with respect to $\F^\tau_t$, $\F^{B,W,\mu}_T$, and $\F^{B,W,\mu}_t$. Find a bounded $\F^{B,W,\mu}_t$-measurable function $\phi_t$ on $\Omega_{\mathrm{input}}$ such that $\phi_t(B,W,\mu) = \E^P[g_T(B,W,\mu) | \F^{B,W,\mu}_t]$. Because $P_n \circ (B,W,\mu)^{-1} = P \circ (B,W,\mu)^{-1} = \rho$ for every $n$, we have $\phi_t(B,W,\mu) = \E^{P_n}[g_T(B,W,\mu) | \F^{B,W,\mu}_t]$. Thus, by Lemma \ref{le:jacodmemin},
\begin{align*}
\E^P[f_t(\tau)g_T(B,W,\mu)g_t(B,W,\mu)] &= \lim_{n\rightarrow\infty}\E^{P_n}[f_t(\tau)g_T(B,W,\mu)g_t(B,W,\mu)] \\
	&= \lim_{n\rightarrow\infty}\E^{P_n}[f_t(\tau)\phi_t(B,W,\mu)g_t(B,W,\mu)] \\
	&= \E^{P}[f_t(\tau)\phi_t(B,W,\mu)g_t(B,W,\mu)].
\end{align*}
As remarked before the proof, the continuous bounded functions generate $\F^{B,W,\mu}_t$ and $\F^\tau_t$, and we concude that 
\begin{align*}
\E^P[f_t(\tau)g_T(B,W,\mu)g_t(B,W,\mu)] = \E^{P}[f_t(\tau)\E^P[g_T(B,W,\mu) | \F^{B,W,\mu}_t]g_t(B,W,\mu)],
\end{align*}
for all bounded functions $f_t$, $g_T$, $g_t$ with the same measurability requirements as above, but without the continuity requirements. This shows that $\F^\tau_t$ is conditionally independent of $\F^{B,W,\mu}_T$ given $\F^{B,W,\mu}_t$ under $P$, for every $t \in [0,T)$, so $P \in \RC(\rho)$.

\textbf{$\bm{\RC(\rho)=\RC^+(\rho)}$:}
First we show $\RC(\rho)\subset\RC^+(\rho)$. Fix $t \in [0,T)$ and $P \in \RC(\rho)$. For $A \in \F^{B,W,\mu}_T$ and $C \in \F^\tau_t$, we have
\[
\PP(C | \F^{B,W,\mu}_t)\PP(A | \F^{B,W,\mu}_t) =  \PP(C \cap A | \F^{B,W,\mu}_t).
\]
By backward martingale convergence, taking decreasing limits in $t$ yields
\[
\PP(C | \F^{B,W,\mu}_{t+})\PP(A | \F^{B,W,\mu}_{t+}) =  \PP(C \cap A | \F^{B,W,\mu}_{t+}).
\]
This shows that $\RC(\rho) \subset \RC^+(\rho)$, and we know from before that $\RC(\rho)$ is closed. Hence, it suffices to show that every point $P \in \RC^+(\rho)$ is the limit point of a sequence in $\RC(\rho)$. To see this, set
\[
P_n := P \circ (B,W,\mu,(\tau+1/n)\wedge T)^{-1},
\]
which converges weakly to $P$. Because $\F^\tau_{t+}$ is conditionally independent of $\F^{B,W,\mu}_T$ given  $\F^{B,W,\mu}_{t+}$ under $P$, we have, for $0 < s \le t < T$,
\begin{align*}
P_n(\tau \le s | \F^{B,W,\mu}_T) &= P(\tau \le s - 1/n | \F^{B,W,\mu}_T) = P(\tau \le s - 1/n | \F^{B,W,\mu}_{(t-1/n)+}) \\
	&= P_n(\tau \le s | \F^{B,W,\mu}_{(t-1/n)+}).
\end{align*}
Because $\F^{B,W,\mu}_{(t-1/n)+} \subset \F^{B,W,\mu}_t$, we conclude that
\begin{align}
P_n(\tau \le s | \F^{B,W,\mu}_T)  = P_n(\tau \le s | \F^{B,W,\mu}_t). \label{pf:density0.1}
\end{align}
Since $P_n(\tau=0)=0$, we also have \eqref{pf:density0.1} for $s=0$. Since \eqref{pf:density0.1} holds for all $s \in [0,t]$, we conclude that $\F^\tau_{t+} = \sigma\{\{\tau \le s\} : s \le t\}$ is independent of $\F^{B,W,\mu}_T$ given $\F^{B,W,\mu}_t$ under $P_n$, for $t \in (0,T)$. To conclude that $P_n \in \RC(\rho)$, we must still check that $\F^\tau_0$ is conditionally independent of $\F^{B,W,\mu}_T$ given  $\F^{B,W,\mu}_0$ under $P_n$. But this is obvious, as $\F^\tau_0= \sigma\{\tau \wedge 0\}$ is the trivial $\sigma$-field.

\textbf{$\bm{\RC(\rho)}$ is convex:}
To check that $\RC(\rho)$ is convex, note that $\RC(\rho)$ is the set of $P \in \P(\Omega_{\mathrm{input}} \times [0,T])$ with first marginal $\rho$ for which
\[
\E^P\left[\phi_t(\tau)\psi(B,W,\mu)\psi_t(B,W,\mu)\right] = \E^P\left[\E^P\left[\left.\phi_t(\tau)\right| \F^{B,W,\mu}_t\right]\psi(B,W,\mu)\psi_t(B,W,\mu)\right],
\]
for every $t \in [0,T]$ and every triple of bounded functions $\phi_t$, $\psi$, and $\psi_t$, measurable with respect to $\F^\tau_{t}$, $\F^{B,W,\mu}_T$, and $\F^{B,W,\mu}_{t}$, respectively. Disintegrate any $P  \in \RC(\rho)$ by writing $P(d\omega,ds) = \rho(d\omega)P(\omega,ds)$, and note that the above equation is equivalent to
\[
\int_{\Omega_{\mathrm{input}} \times [0,T]}P(d\omega,du)\psi(\omega)\psi_t(\omega)\phi_t(u) = \int_{\Omega_{\mathrm{input}}}\rho(d\omega)\psi(\omega)\psi_t(\omega)\int_{[0,T]}P(\omega,du)\phi_t(u).
\]
This is clearly a convex constraint on $P$, and we conclude that $\RC(\rho)$ is convex.

\textbf{$\bm{\RC^+(\rho)}$ is contained in the closure of $\bm{\RC_0(\rho)}$:}
Let $P \in \RC^+(\rho)$, and consider the process $H_t = 1_{\{\tau \le t\}}$ defined on $\Omega_{\mathrm{input}} \times [0,T]$. Note that $H$ is $\FF^{\tau}_+$-adapted, because $\{H_t = 1\} = \{\tau \le t\} \in \F^\tau_{t+}$. 
As a first approximation, note that the c\`adl\`ag process $H^n_t = 1_{\{(\tau + 1/n) \wedge T \le t\}}$ is $\FF^{\tau}$-adapted and converges almost surely to $H$ in the Skorohod topology. Because $\Phi(H^n) = (\tau + 1/n) \wedge T \rightarrow \tau = \Phi(H)$ a.s. by Lemma \ref{le:Phi-continuous}, we may henceforth assume that $H$ is in fact $\FF^{\tau}$-adapted.

Next, by a routine approximation we may find a sequence of c\`adl\`ag $\FF^{\tau}$-adapted processes $H^n$ converging almost surely to $H$ and of the form
\[
H^n_t = \sum_{k=1}^Kh^n_k1_{[t^n_k,t^n_{k+1})}(t),
\]
where $0 < t^n_1 < t^n_2 < \ldots < t^n_K = T < t^n_{K+1}$, and where $h^n_k \ge 0$ is an $\F^\tau_{t^n_k}$-measurable random variables. Replacing $h^n_k$ by $\max_{j=1,\ldots,k}h^n_j$ does not change the value of $\Phi(H^n)$, which again converges almost surely to $\Phi(H) = \tau$ by Lemma \ref{le:Phi-continuous}, which applies because $H \in \mathcal{S}$ a.s.
As a final approximation, let $\widehat{H}^n_t = H^n_t + t/n$; this last approximation accounts for the fact that $H^n$ may not belong a.s. to $\mathcal{S}$, while $\widehat{H}$ does. Note that $|H^n_t - \widehat{H}^n_t| \rightarrow 0$ uniformly in $t$, so $\lim_n\Phi(\widehat{H}^n) = \lim_n\Phi(H^n) = \tau$.
With these approximations, we may then assume henceforth that $H$ itself is increasing and of the form 
\[
H_t = \sum_{k=1}^Kh_k1_{[t_k,t_{k+1})}(t) + \frac{t}{n},
\]
where $0 < t_1 < t_2 < \ldots < t_K = T < t_{K+1}$, and where $h_k$ is $\F^\tau_{t_k}$-measurable for each $k$.

Now define $S_t$ for $t \in [0,T]$ by
\[
S_t = (B_{\cdot \wedge t}, W_{\cdot \wedge t}, \mu^t),
\]
where $m^t$ was defined just before the start of the proof. Then $S=(S_t)_{t \in [0,T]}$ is a continuous $\FF^{B,W,\mu}$-adapted process, with values in $\Omega_{\mathrm{input}}$. In fact, $\F^{B,W,\mu}_t = \sigma(S_t) = \sigma(S_{\cdot \wedge t})$ for every $t$.
For every $k=1,\ldots,K$, note that $(h_1,\ldots,h_k)$ is conditionally independent of $S$ given $S_{\cdot \wedge t_k}$. It follows from Proposition \ref{pr:density-discrete} that there exists a sequence of continuous functions $g^n_k : \Omega_{\mathrm{input}} \rightarrow \R_+$ such that $g^n_k(S)$ is $\sigma(S_{t_k})=\F^{B,W,\mu}_{t_k}$-measurable for each $k$ and
\[
(S,g^n_1(S),\ldots,g^n_K(S)) \Rightarrow (S,h_1,\ldots,h_K), \text{ in } \Omega_{\mathrm{input}} \times \R_+^K, \text{ as } n \rightarrow \infty.
\]
Now define
\[
H^n_t = H^n_t(S) = \sum_{k=1}^Kg^n_k(S)1_{[t_k,t_{k+1})}(t) + \frac{t}{n}.
\]
It follows that $(S,H^n) \Rightarrow (S,H)$, and because $H$ belongs almost surely to the set $\SS$ of Lemma \ref{le:Phi-continuous} we have $(S,\Phi(H^n)) \Rightarrow (S,\Phi(H))=(S,\tau)$.
Now let $\tilde{g}^n_k(s) = \max_{j=1,\ldots,k}g^n_j(s)$, and define
\[
\widetilde{H}^n_t = \widetilde{H}^n_t(S) = \sum_{k=1}^K\tilde{g}^n_k(S)1_{[t_k,t_{k+1})}(t).
\]
Then $\Phi(\widetilde{H}^n) = \Phi(H^n)$ almost surely, so again we have $(S,\Phi(\widetilde{H}^n)) \Rightarrow (S,\tau)$. Now, because each $\tilde{g}^n_k$ is continuous, we may view $\widetilde{H}(\cdot)$ as a continuous map from $\Omega_{\mathrm{input}}$ to the subset $\SS$ of $D$ defined in Lemma \ref{le:Phi-continuous}. Hence, Lemma \ref{le:Phi-continuous} ensures that $\Phi(\widetilde{H}^n(\cdot))$ is a continuous map from $\Omega_{\mathrm{input}}$ to $[0,T]$. The law of $(S,\Phi(\widetilde{H}^n))$ thus belongs to $\RC_0(\rho)$.
\hfill\qedsymbol

\subsection*{Proof of Proposition \ref{pr:strongisweak}}
With Corollary \ref{co:same-supremum} in hand, we are now ready to prove Proposition \ref{pr:strongisweak}.
It is readily checked that $P$ satisfies property (2) of Definition \ref{def:weakMFE} of a weak MFE. As $\tau^*$ is a $\FF^{B,W}$-stopping time, $\F^\tau_{t+}$ is contained in the $P$-completion of $\F^{B,W,\mu}_t$, and the compatibility property (3) holds easily (noting that compatibility is not sensitive to the completion of filtrations). To prove (1) is slightly more involved: Clearly $P \circ (B,W)^{-1} = \W^2$, where as usual $\W$ denotes Wiener measure. Note also that if $g_t : \C \rightarrow \R$ is bounded and $\F^{B,W}_t$-measurable, then
\[
\E[g_t(B,W) | B] = \int_{\C} g_t(B,w)\,\W(dw) = \E[g_t(B,W) | \F^B_t], \ a.s.
\]
Thus, since $\tau$ is a.s. $(B,W)$-measurable, if $t \in [0,T]$ and $C \in \F^{W,\tau}_t$ then
\[
\mu(C) = P((W,\tau) \in C | B) = P((W,\tau) \in C | \F^B_t), \ P-a.s.
\]
This shows that (the completion of) $\F^\mu_t$ is contained in (the completion of) $\F^B_t$. Hence, under $P$, the completion of $\F^{B,W,\mu,\tau}_t$ is contained in that of $\F^{B,W}_t$, which proves property (1). The weak fixed point condition (5) holds because $\mu$ is $B$-measurable.
Finally, the optimality condition (4) follows from Corollary \ref{co:same-supremum}.
\hfill \qedsymbol

\subsection{A shortcut to compatibility}
As a final preparatory step, before proving the main results we state one last useful lemma. It allows us to check a much simpler criterion in place of the compatibility property (3) of Definition \ref{def:weakMFE}, which does not behave too well under limits. In fact, this lemma is precisely the reason we work with the conditional \emph{joint} law of $(W,\tau)$ and not just $\tau$ itself.

\begin{lemma} \label{le:key-compatibility-test}
Suppose $P \in \P(\Omega)$ satisfies properties (2) and (5) of Definition \ref{def:weakMFE}. Suppose also that  $\F^{B,\mu}_T \vee \F^{W,\tau}_t$ is independent of $\sigma\{W_s - W_t : s \in [t,T]\}$, for every $t \in [0,T)$. Then $P$ satisfies property (3) of Definition \ref{def:weakMFE}; that is, $\F^\tau_{t+}$ is conditionally independent of $\F^{B,W,\mu}_T$ given $\F^{B,W,\mu}_{t+}$, for every $t \in [0,T]$.
\end{lemma}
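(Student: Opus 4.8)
The plan is to prove the statement first for the \emph{raw} filtrations, and then to pass to the right-continuous ones by a standard limiting argument. Concretely, I would first establish that for every $t \in [0,T)$, $\F^\tau_t$ is conditionally independent of $\F^{B,W,\mu}_T$ given $\F^{B,W,\mu}_t$. Property (3) of Definition \ref{def:weakMFE} — which involves the $+$-filtrations, and which is trivial at $t=T$ since $\F_{T+}=\F_T$ — then follows by letting $s \downarrow t$ in the identity $P(C \cap A \mid \F^{B,W,\mu}_s) = P(C \mid \F^{B,W,\mu}_s)P(A \mid \F^{B,W,\mu}_s)$, which holds for $C \in \F^\tau_{t+} \subseteq \F^\tau_s$, $A \in \F^{B,W,\mu}_T$, and $s \in (t,T)$, using backward martingale convergence exactly as in the proof of the inclusion $\RC(\rho) \subseteq \RC^+(\rho)$ within the proof of Theorem \ref{th:nonradomizeddensity2}. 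Since $\F^{B,W,\mu}_t \subseteq \F^{B,W,\mu}_T$, the raw statement amounts to showing
\[
\E^P[\xi \mid \F^{B,W,\mu}_T] = \E^P[\xi \mid \F^{B,W,\mu}_t]
\]
for every bounded $\F^\tau_t$-measurable $\xi$; and since $\F^\tau_t = \sigma(\tau \wedge t)$, I may take $\xi = \psi(\pi_t(W,\tau))$ for some bounded measurable $\psi$ on $\C \times [0,T]$, where $\pi_t(w,s) = (w_{\cdot \wedge t}, s \wedge t)$.

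Fix $t \in [0,T)$. The first step removes the future of $W$: with $\G_t := \sigma\{W_s - W_t : s \in [t,T]\}$ one has $\F^{B,W,\mu}_T = \F^{B,\mu}_T \vee \F^W_t \vee \G_t$, and the hypothesis of the lemma says $\G_t$ is independent of $\F^{B,\mu}_T \vee \F^{W,\tau}_t$, hence of $\F^{B,\mu}_T \vee \F^W_t \vee \sigma(\xi)$. Invoking the elementary fact that $\E[\xi \mid \mathcal{A} \vee \mathcal{N}] = \E[\xi \mid \mathcal{A}]$ whenever $\mathcal{N}$ is independent of $\mathcal{A} \vee \sigma(\xi)$, we obtain $\E^P[\xi \mid \F^{B,W,\mu}_T] = \E^P[\xi \mid \F^{B,\mu}_T \vee \F^W_t]$.

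The second step shows, using the weak fixed point condition (5), that the right-hand side is $\F^{B,W,\mu}_t$-measurable. As the filtrations here are raw and $\C$ is Polish, $\F^{B,\mu}_T = \sigma(B,\mu)$, so (5) says exactly that the regular conditional law of $(W,\tau)$ given $\F^{B,\mu}_T$ is $\mu$ itself. Since $W_{\cdot\wedge t}$ is a measurable function of $(W,\tau)$, a standard disintegration then identifies the conditional law of $\pi_t(W,\tau)$ given $\F^{B,\mu}_T \vee \F^W_t$ with the regular conditional distribution of $\mu^t := \mu \circ \pi_t^{-1}$ given its first, $\C$-valued, marginal coordinate, evaluated at the point $W_{\cdot\wedge t}$; fixing a jointly measurable version of this disintegration, this conditional law is measurable with respect to $\sigma(\mu^t) \vee \F^W_t = \F^\mu_t \vee \F^W_t \subseteq \F^{B,W,\mu}_t$. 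As $\xi = \psi(\pi_t(W,\tau))$, it follows that $\E^P[\xi \mid \F^{B,\mu}_T \vee \F^W_t]$ is $\F^{B,W,\mu}_t$-measurable. Combining the two steps and using the tower property (legitimate because $\F^{B,W,\mu}_t \subseteq \F^{B,\mu}_T \vee \F^W_t$),
\[
\E^P[\xi \mid \F^{B,W,\mu}_t] = \E^P\bigl[\E^P[\xi \mid \F^{B,\mu}_T \vee \F^W_t] \,\big|\, \F^{B,W,\mu}_t\bigr] = \E^P[\xi \mid \F^{B,\mu}_T \vee \F^W_t] = \E^P[\xi \mid \F^{B,W,\mu}_T],
\]
which is the desired raw statement.

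The crux is the measurability bookkeeping in the second step: one must argue that conditioning on \emph{all} of $\sigma(B,\mu)$ and then on $W_{\cdot\wedge t}$ effectively only ``sees'' the truncated measure $\mu^t$ and the truncated path $W_{\cdot\wedge t}$, so that the result falls in $\F^\mu_t \vee \F^W_t$ rather than merely in $\F^{B,\mu}_T \vee \F^W_t$. This is precisely where condition (5) is used — and it also explains why one works with the conditional joint law of $(W,\tau)$ rather than that of $\tau$ alone, since otherwise $\mu^t$ would not record how $\tau$ correlates with the path of $W$ up to time $t$. Carefully checking that the relevant disintegration can be chosen jointly measurable, using the definition $\F^\mu_t = \sigma\{\mu \circ \pi_t^{-1}\}$, is the one delicate point; the rest is routine. (Condition (2) also appears among the hypotheses but is not needed in this argument.)
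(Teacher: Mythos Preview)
Your proof is correct and follows essentially the same strategy as the paper: reduce to the raw filtrations via backward martingale convergence, strip off the $W$-future using the independence hypothesis, and then use the fixed point condition (5) to show that what remains is $\F^{B,W,\mu}_t$-measurable.

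The execution of this last step differs slightly. You argue via a disintegration of $\mu^t$ with respect to its first marginal, which requires a jointly measurable choice of kernel $(m,w_0)\mapsto \kappa(m,w_0;\cdot)$ --- the point you correctly flag as delicate. The paper instead works directly with test functions: it uses (5) to rewrite $\E[f_t(\tau)h_t(W)\,\cdot\,]$ as $\E[\int f_t(s)h_t(w)\,\mu(dw,ds)\,\cdot\,]$, and then simply observes that $\int\phi\,d\mu$ is $\F^\mu_t$-measurable whenever $\phi$ is $\F^{W,\tau}_t$-measurable (since $\int\phi\,d\mu = \int\phi\,d\mu^t$). This bypasses the joint-measurability issue entirely, at the cost of a slightly longer chain of equalities. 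Your observation that condition (2) is not actually needed is also correct; the paper invokes it in its final ``easy identity'' $\E[g_T(B,\mu)h^+(W)\mid\F^{B,W,\mu}_t] = \E[g_T(B,\mu)\mid\F^{B,\mu}_t]\,\E[h^+(W)]$, but your route avoids this.
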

\begin{proof}
According to the final claim of Theorem \ref{th:nonradomizeddensity2}, it suffices to check that $\F^\tau_{t}$ is conditionally independent of $\F^{B,W,\mu}_T$ given $\F^{B,W,\mu}_{t}$, for every $t \in [0,T]$.
Fix bounded functions $f_t$, $g_T$, $g_t$, $h^+$, and $h_t$, such that $f_t(\tau)$ is $\F^\tau_t$-measurable, $g_T(B,\mu)$ is $\F^{B,\mu}_T$-measurable, $g_t(B,\mu)$ is $\F^{B,\mu}_t$-measurable, $h^+(W)$ is $\sigma\{W_s - W_t : s \in [t,T]\}$-measurable, and $h_t(W)$ is $\F^W_t$-measurable. Compute
\begin{align*}
\E&\left[f_t(\tau)g_T(B,\mu)g_t(B,\mu)h^+(W)h_t(W)\right] \\
	&= \E\left[f_t(\tau)g_T(B,\mu)g_t(B,\mu)h_t(W)\right]\E[h^+(W)] \\
	&= \E\left[g_t(B,\mu)g_T(B,\mu)\int f_t(s)h_t(w)\mu(dw,ds)\right]\E[h^+(W)]  \\
	&= \E\left[g_t(B,\mu)\E\left[\left.g_T(B,\mu)\right|\F^{B,\mu}_t\right]\int f_t(s)h_t(w)\mu(dw,ds)\right]\E[h^+(W)] \\
	&= \E\left[g_t(B,\mu)h_t(W)f_t(\tau)\E\left[\left.g_T(B,\mu)\right|\F^{B,\mu}_t\right]\right]\E[h^+(W)] \\
	&= \E\left[g_t(B,\mu)h_t(W)f_t(\tau)\E\left[\left.g_T(B,\mu)h^+(W)\right|\F^{B,W,\mu}_t\right]\right].
\end{align*}
The first step used the assumed independence, whereas the second and fourth used the fixed point property $\mu = P((W,\tau) \in \cdot | B,\mu)$. The third step used the fact that $\int \phi\,d\mu$ is $\F^\mu_t$-measurable (and thus $\F^{B,\mu}_t$-measurable) for every bounded $\F^{W,\tau}_t$-measurable function $\phi$ on $\C \times [0,T]$. Finally, the last step used the easy identity
\[
\E\left[\left.g_T(B,\mu)h^+(W)\right|\F^{B,W,\mu}_t\right] = \E\left[\left.g_T(B,\mu)\right|\F^{B,\mu}_t\right]\E[h^+(W)].
\]
\end{proof}

\section{Proofs of limit theorems}
This section is devoted to the proofs of Theorems \ref{th:mainlimit} and \ref{th:converselimit}.
At this point it may be useful to recall the notations in these theorems. In particular, take care to distinguish the empirical measure of stopping times $\overline{\mu}(\vec{\tau}^{\,n})$, defined in \eqref{def:empirical-measure}, from the \emph{joint} empirical measure $\widehat{\mu}(\vec{\tau}^{\,n})$, defined in \eqref{def:empirical-measure-full}.

\subsection{Proof of Theorem \ref{th:mainlimit}}

Abbreviate $\widehat{\mu}^n = \widehat{\mu}^n(\vec{\tau}^{\,n})$. Note first that $\PP \circ (B,W^k)^{-1} = \W^2$ for all $k$, so the $\C^2$-marginal of $P_n$ does not depend on $n$.
Clearly, the $[0,T]$-marginal sequence $(P_n \circ \tau^{-1})_{n=1}^\infty$ is tight because $[0,T]$ is compact. To show that the marginal sequence $(P_n \circ (\widehat{\mu}^n)^{-1})_{n=1}^\infty$ is tight, it suffices to show that the sequence of mean measures $\E^{P_n}[\widehat{\mu}^n(\cdot)] \in \P(\W \times [0,T])$ is tight (c.f. the proof of \cite[Proposition 2.2]{sznitman}). But this follows from the observation that the first marginal of $\E^{P_n}[\widehat{\mu}^n(\cdot)]$ is the Wiener measure for each $n$.
As each of marginal sequences is tight, the sequence $(P_n)_{n=1}^\infty \subset \P(\Omega)$ is tight. Let $P$ be any limit point of $P_n$, and relabel the subsequence to assume $P_n \rightarrow P$. We check that $P$ satisfies the five defining properties of a weak MFE.

\textbf{Proof of (1):} 
First, note that
\begin{align*}
P \circ (B,W)^{-1} &= \lim_{n\rightarrow\infty}\frac{1}{n}\sum_{k=1}^n\PP \circ (B,W^k)^{-1} = \W^2.
\end{align*}
We next prove that $(B,W)$ is a Wiener process with respect to $\FF^{B,W,\mu,\tau}_+$, or equivalently with respect to $\FF^{B,W,\mu,\tau}$. Fix $t \in [0,T)$. Let $f_t$, $g_t$, $h_t$, and $h^+$ be bounded continuous functions on $[0,T]$, $\P(\C \times [0,T])$, $\C^2$, and $\C^2$, respectively. Assume  $f_t$ is $\F^\tau_{t}$-measurable, $g_t$ is $\F^\mu_t$-measurable, $h_t$ is $\F^{B,W}_t$-measurable, and $h^+$ is $\sigma\{(B_s-B_t,W_s-W_t) : s \in [t,T]\}$-measurable. Then, because $(B,W^1,\ldots,W^n)$ are $\FF^{B,W^1,\ldots,W^n}_+$-Wiener processes,
\begin{align*}
\E^P\left[f_t(\tau)g_t(\mu)h_t(B,W)h^+(B,W)\right] &= \lim_{n\rightarrow\infty}\frac{1}{n}\sum_{i=1}^n\E^{\PP}\left[f_t(\tau^n_i)g_t(\widehat{\mu}^n)h_t(B,W^i)h^+(B,W^i)\right] \\
	&= \lim_{n\rightarrow\infty}\frac{1}{n}\sum_{i=1}^n\E^{\PP}\left[f_t(\tau^n_i)g_t(\widehat{\mu}^n)h_t(B,W^i)\right]\E^{\PP}\left[h^+(B,W^i)\right] \\
	&= \E^P\left[f_t(\tau)g_t(\mu)h_t(B,W)\right]\E^P\left[h^+(B,W)\right].
\end{align*}
This shows that $\sigma\{(B_s-B_t,W_s-W_t) : s \in [t,T]\}$ is independent of $\F^{B,W,\mu,\tau}_t$, under $P$.

\textbf{Proof of (2):} To show that $(B,\mu)$ and $W$ are independent under $P$ is straightforward: For bounded continuous functions $f : \C \times \P(\C \times [0,T])  \rightarrow \R$ and $g : \C \rightarrow \R$, the law of large numbers yields
\begin{align*}
\E^P&\left[f(B,\mu)g(W)\right] - \E^P[f(B,\mu)]]\E^P[g(W)] \\
	&= \lim_{n\rightarrow\infty}\frac{1}{n}\sum_{i=1}^n\E^{\PP}\left[f(B,\widehat{\mu}^n)g(W^i)\right] - \E^{\PP}\left[f(B,\widehat{\mu}^n)\right]\frac{1}{n}\sum_{i=1}^n\E^{\PP}\left[g(W^i)\right] \\
	&= \lim_{n\rightarrow\infty}\E^{\PP}\left[f(B,\widehat{\mu}^n)\left(\frac{1}{n}\sum_{i=1}^ng(W^i) - \int g\,d\W\right)\right] \\
	&= 0,
\end{align*}
since $W^i$ are i.i.d. with law $\W$ under $\PP$.

\textbf{Proof of (5):}
The proof of the fixed point condition (5) is also straightforward. Let $f$ and $g$ be bounded continuous functions on $\C \times \P(\C \times [0,T])$ and $\C \times [0,T]$, respectively, and notice that
\begin{align*}
\E^P\left[f(B,\mu)g(W,\tau)\right] &= \lim_{n\rightarrow\infty}\frac{1}{n}\sum_{i=1}^n\E^{\PP}\left[f(B,\widehat{\mu}^n)g(W^i,\tau^n_i)\right] \\
	&= \lim_{n\rightarrow\infty}\E^{\PP}\left[f(B,\widehat{\mu}^n)\int g\,d\widehat{\mu}^n\right] \\
	&= \E^P\left[f(B,\mu)\int g\,d\mu\right].
\end{align*}

\textbf{Proof of (3):} 
Because we have established properties (2) and (5), Lemma \ref{le:key-compatibility-test} will immediately yield (3) once we can show that $\F^{B,\mu}_T \vee \F^{W,\tau}_t$ is independent of $\sigma\{W_s - W_t : s \in [t,T]\}$, for every $t \in [0,T)$.
Fix $t \in [0,T)$. Fix bounded continuous functions $f$ on $\C \times \P(\C \times [0,T])$, $g_t$ on $[0,T]$, $h_t$ on $\C$, and $h^+$ on $\C$. Assume $f$ is uniformly continuous (thus $\F^{B,\mu}_T$-measurable), $g$ is $\F^{\tau}_t$-measurable, $h_t$ is $\F^W_t$-measurable, and $h^+$  is $\sigma\{W_s - W_t : s \in [t,T]\}$-measurable. Define
\[
\widehat{\mu}^{n,-i} := \frac{1}{n-1}\sum_{k \neq i}\delta_{(W^k,\tau^n_k)} = \frac{n}{n-1}\widehat{\mu}^n - \frac{1}{n-1}\delta_{(W^k,\tau^n_k)},
\]
and note that $\|\widehat{\mu}^{n,-i} - \widehat{\mu}^n\|_{TV} \le 2/(n-1)$ a.s. The total variation topology is finer than weak convegence, and so
\[
|f(B,\widehat{\mu}^n) - f(B,\widehat{\mu}^{n,-i})| \rightarrow 0,
\]
in $L^\infty$, uniformly in $i$. Now, since $\sigma\{W^i_s - W^i_t : s \in [t,T]\}$ is independent of $\F^{W^i}_t \vee \F^{B,(W^k)_{k \neq i}}_T$, we have
\begin{align*}
\E^P\left[f(B,\mu)g_t(\tau)h_t(W)h^+(W)\right] &= \lim_{n\rightarrow\infty}\frac{1}{n}\sum_{i=1}^n\E^{\PP}\left[f(B,\widehat{\mu}^n)g_t(\tau^n_i)h_t(W^i)h^+(W^i)\right] \\
	&= \lim_{n\rightarrow\infty}\frac{1}{n}\sum_{i=1}^n\E^{\PP}\left[f(B,\widehat{\mu}^{n,-i})g_t(\tau^n_i)h_t(W^i)h^+(W^i)\right] \\
	&= \lim_{n\rightarrow\infty}\frac{1}{n}\sum_{i=1}^n\E^{\PP}\left[f(B,\widehat{\mu}^{n,-i})g_t(\tau^n_i)h_t(W^i)\right]\E^{\PP}[h^+(W^i)] \\
	&= \lim_{n\rightarrow\infty}\frac{1}{n}\sum_{i=1}^n\E^{\PP}\left[f(B,\widehat{\mu}^n)g_t(\tau^n_i)h_t(W^i)\right]\E^{\PP}[h^+(W^i)] \\
	&= \E^P\left[f(B,\mu)g_t(\tau)h_t(W)\right]\E^P[h^+(W)].
\end{align*}
This implies $\F^{B,\mu}_T \vee \F^{W,\tau}_t$ is independent of $\sigma\{W_s - W_t : s \in [t,T]\}$, under $P$.

\textbf{Proof of (4):}
It remains to prove that the optimality condition holds in the limit. Recall that $\mu^\tau(\cdot) = \mu(\C \times \cdot)$ denotes the $[0,T]$-marginal of $\mu$.
By Corollary \ref{co:same-supremum}, it suffices to show
\[
\E^P[F(B,W,\mu^\tau,\tau)] \ge \E^P[F(B,W,\mu^\tau,\sigma)]
\]
for every $\FF^{B,W,\mu}$-stopping time $\sigma$ on $\Omega$ of the form $\sigma = \hat{\sigma}(B,W,\mu)$, where $\hat{\sigma} : \C^2 \times \P(\C \times [0,T]) \rightarrow [0,T]$ is continuous. Fix such a stopping time. 
For the $n$-player game define
\[
\sigma_i = \hat{\sigma}(B,W^i,\widehat{\mu}^n(\vec{\tau}^{\,n})).
\]
Then $\sigma_i$ is a $\FF^{B,W^1,\ldots,W^n}$-stopping time. Recall that $\overline{\mu}^n$ denotes the $[0,T]$-marginal of the joint empirical measure $\widehat{\mu}^n$. The Nash property implies
\begin{align*}
\E^P[F(B,W,\mu,\tau)] &= \lim_{n\rightarrow\infty}\frac{1}{n}\sum_{i=1}^n\E^{\PP}\left[F(B,W^i,\overline{\mu}^n(\vec{\tau}^{\,n}),\tau^n_i) \right] \\
	&\ge \limsup_{n\rightarrow\infty}\frac{1}{n}\sum_{i=1}^n\E^{\PP}\left[F(B,W^i,\overline{\mu}^n(\vec{\tau}^{\,n,-i},\sigma_i),\sigma_i) \right] \\
	&= \limsup_{n\rightarrow\infty}\frac{1}{n}\sum_{i=1}^n\E^{\PP}\left[F(B,W^i,\overline{\mu}^n(\vec{\tau}^{\,n}),\hat{\sigma}(B,W^i,\widehat{\mu}^n(\vec{\tau}^{\,n}))) \right] \\
	&= \E^P[F(B,W,\mu^\tau,\hat{\sigma}(B,W,\mu))].
\end{align*}
Indeed, the equality in the third line follows from the easy estimate $\|\overline{\mu}^n(\vec{\tau}^{\,n,-i},\sigma_i) - \overline{\mu}^n(\vec{\tau}^{\,n})\|_{TV} \le 2/n$, where $\|m\|_{TV} = \sup_{|f| \le 1}\int f\,dm$ denotes total variation, and also from the continuity of $F=F(b,w,m,t)$ in $m$ ensured by assumption \textbf{D}. Both the first and last lines use Lemma \ref{le:jacodmemin} to deal with the potential discontinuity of $F$ in $(B,W)$, with the last step using crucially the continuity of $\hat{\sigma}$.
\hfill \qedsymbol

\subsection{Proof of Theorem \ref{th:converselimit}}

Let $P \in \P(\Omega)$ be a weak MFE. Construct, on some alternative probability space $(\widetilde{\Omega},\widetilde{\F},\widetilde{\PP})$, a $\C \times \P(\C \times [0,T])$-valued random variable $(B,\mu)$ with law $P \circ (B,\mu)^{-1}$ and a sequence of random variables $(W^i,\tau^i)$, which are conditionally independent given $(B,\mu)$ and have common conditional law $\mu$. There is some abuse of notation here, as $(B,\mu)$ is used both for the new random variable and for the random variable defined on the canonical space $\Omega$, but this should cause no confusion as we work exclusively on $(\widetilde{\Omega},\widetilde{\F},\widetilde{\PP})$ in this proof.

The law of $(B,\mu,W^i,\tau^i)$ is precisely $P$, for each $i$. As usual, let $\vec{\tau}^{\,n} = (\tau^1,\ldots,\tau^n)$, and for $t^1,\ldots,t^n \in [0,T]$ define the empirical measures (now on $\widetilde{\Omega}$)
\[
\overline{\mu}^n(t^1,\ldots,t^n) = \frac{1}{n}\sum_{i=1}^n\delta_{t^i}, \quad\quad \widehat{\mu}^n(t^1,\ldots,t^n) = \frac{1}{n}\sum_{i=1}^n\delta_{(W^i,t^i)}.
\]
Define
\begin{align*}
\epsilon_n := \left\{\sup_{\sigma \in \SS_n}\E\left[F(B,W^1,\overline{\mu}^n(\vec{\tau}^{\,n,-1},\sigma),\sigma)\right] - \E\left[F(B,W^1,\overline{\mu}^n(\vec{\tau}^{\,n}),\tau^1)\right]\right\}^+,
\end{align*}
where $\SS_n$ is the set of $\FF^{B,W^1,\ldots,W^n}$-stopping times (defined on $\widetilde{\Omega}$). By symmetry
\begin{align}
\sup_{\sigma \in \SS_n}\E\left[F(B,W^k,\overline{\mu}^n(\vec{\tau}^{\,n,-k},\sigma),\sigma)\right] \le \epsilon_n + \E\left[F(B,W^k,\overline{\mu}^n(\vec{\tau}^{\,n}),\tau^k)\right],
\end{align}
for every $k=1,\ldots,n$. We first show that $\epsilon_n \rightarrow 0$. Indeed, 
\begin{align*}
\epsilon_n &\le \E\left[\sup_{t \in [0,T]}\left|F(B,W^1,\overline{\mu}^n(\vec{\tau}^{\,n,-1},t),t) - F(B,W^1,\mu,t)\right|\right] \\
	&\quad\quad + \left\{\sup_{\sigma \in \SS_\infty}\E\left[F(B,W^1,\mu,\sigma)\right] - \E\left[F(B,W^1,\overline{\mu}^n(\vec{\tau}^{\,n}),\tau^1)\right]\right\}^+,
\end{align*}
where $\SS_\infty = \cup_{n\ge 1}\SS_n$.
We claim that the term on the first line converges to zero. Indeed, $\|\overline{\mu}^n(\vec{\tau}^{\,n,-1},t)-\overline{\mu}^n(\vec{\tau}^{\,n})\| \le 2/n$, and $\overline{\mu}^n(\vec{\tau}^{\,n}) \rightarrow \mu$ weakly a.s. by the (conditional) law of large numbers. Conclude using the assumption that $\P([0,T]) \times [0,T]) \ni (m,t) \mapsto F(b,w,m,t)$ is (uniformly) continuous for each fixed $(b,w)$.
The second term also tends to zero, because
\[
\E\left[F(B,W^1,\overline{\mu}^n(\vec{\tau}^{\,n}),\tau^1)\right] \rightarrow \E\left[F(B,W^1,\mu,\tau^1)\right] \ge \sup_{\sigma \in \SS_\infty}\E\left[F(B,W^1,\mu,\sigma)\right].
\]
Indeed, to prove the last inequality, note that for any $\sigma \in \SS_\infty$ we can easily check that $\F^\sigma_t$ is conditionally independent of $\F^{B,W^1,\mu}_T$ given $\F^{B,W^1,\mu}_t$, for every $t$. Because $(B,W^1,\mu,\tau^1)$ has law $P$, which is a weak MFE, the optimality condition (4) of Definition \ref{def:weakMFE} provides the desired inequality.

It may appear that we have shown that $(\tau^1,\ldots,\tau^n)$ form an $\epsilon_n$-Nash equilibrium for the $n$-player game, with $\epsilon_n \rightarrow 0$, but this is not accurate. The stopping times $\tau^i$ are not stopping times relative to $\FF^{B,W^1,\ldots,W^n}$, but rather to a larger filtration. This necessitates one more approximation, using a straightforward extension of Theorem \ref{th:nonradomizeddensity2} to deal with vectors of stopping times as opposed to single stopping times; the proof of this extension is exactly the same but notationally more cumbersome. Note that $\F^{\tau^1,\ldots,\tau^n}_t$ is conditionally independent of $\F^{B,W^1,\ldots,W^n}_T$ given $\F^{B,W^1,\ldots,W^n,\mu}_t$, for every $t$, simply because $(B,W^1,\ldots,W^n)$ is a $\FF^{B,W^1,\ldots,W^n,\tau^1,\ldots,\tau^n}$-Wiener process. Hence, using the aforementioned extension of Theorem \ref{th:nonradomizeddensity2}, we may find $\FF^{B,W^1,\ldots,W^n}$-stopping times, $\tau^1_k,\ldots,\tau^n_k$, such that
\[
(B,W^1,\ldots,W^n,\tau^1_k,\ldots,\tau^n_k) \Rightarrow (B,W^1,\ldots,W^n,\tau^1,\ldots,\tau^n),
\]
as $k\rightarrow\infty$. Let $\vec{\tau}^{\,n,k} = (\tau^1_k,\ldots,\tau^n_k)$, and define
\begin{align*}
\epsilon^k_n := \max_{i=1,\ldots,n}\left\{\sup_{\sigma \in \SS_n}\E\left[F(B,W^i,\overline{\mu}^n(\vec{\tau}^{\,n,k,-i},\sigma),\sigma)\right] - \E\left[F(B,W^i,\overline{\mu}^n(\vec{\tau}^{\,n,k}),\tau^i_k)\right]\right\}^+.
\end{align*}
For a fixed $n$, we can argue that $\lim_{k\rightarrow\infty}\epsilon^k_n = \epsilon_n$. Indeed, this follows from the observation that
\begin{align*}
\lim_{k\rightarrow\infty}\max_{i=1,\ldots,n}\left|\E\left[F(B,W^i,\overline{\mu}^n(\vec{\tau}^{\,n,k}),\tau^i_k)\right] - \E\left[F(B,W^i,\overline{\mu}^n(\vec{\tau}^{\,n}),\tau^i)\right]\right| = 0,
\end{align*}
by construction, and
\begin{align*}
\lim_{k\rightarrow\infty}\max_{i=1,\ldots,n}\E\left[\sup_{t \in [0,T]}\left|F(B,W^i,\overline{\mu}^n(\vec{\tau}^{\,n,k,-i},t),t) - F(B,W^i,\overline{\mu}^n(\vec{\tau}^{\,n,-i},t),t)\right|\right] =0,
\end{align*}
because $m \mapsto F(b,w,m,t)$ is continuous in $m$, \emph{uniformly in $t$}, for almost every fixed $(b,w)$.

In summary, we may find $k_n \rightarrow \infty$ such that
\[
\frac{1}{n}\sum_{i=1}^n\text{Law}(B,W^i,\widehat{\mu}^n(\vec{\tau}^{\,n,k_n}),\tau^i_{k_n}) \rightarrow \text{Law}(B,W^1,\mu,\tau^1),
\]
as well as $\epsilon^{k_n}_n \downarrow 0$ and
\begin{align*}
\max_{i=1,\ldots,n}\sup_{\sigma\in\SS_n}\E\left[F(B,W^i,\overline{\mu}^n(\vec{\tau}^{\,n,k_n,-i},\sigma),\sigma)\right] &\le 
\E\left[F(B,W^i,\overline{\mu}^n(\vec{\tau}^{\,n,k_n}),\tau^i_{k_n})\right] + \epsilon^{k_n}_n.
\end{align*}

\section{Existence under continuity assumption \textbf{D}}
This section is devoted to the proof of Theorem \ref{th:existence}. First, we prove existence of a weak MFE (more precisely, a strong MFE with weak stopping time) in the case that the time set and the common noise range space are finite. Then, we take weak limits. We introduce the following discretization.

For each positive integer $n$, let $t^n_i = i2^{-n}T$ for $i=0,\ldots,2^n$. Choose partitions $\pi^n$ of $\R$ into $n$ measurable sets of strictly positive Lebesgue measure, in such a way that $\pi^{n+1}$ is a refinement of $\pi^n$ for each $n$ and the union $\cup_{n\ge 1}\pi^n$ generates the entire Borel $\sigma$-field. We will define a sub-filtration of $\FF^B$ according to which sets of $\pi^n$ contains the increments  $B_{t^n_{i+1}}-B_{t^n_i}$. Precisely, define the $\sigma$-field
\begin{align*}
\G^n_{t^n_k} &= \sigma\left\{\{B_{t^n_i}-B_{t^n_{i-1}} \in C\} : C \in \pi^n, \ i=1,\ldots,k\right\}, \text{ for } k=1,\ldots,2^n, \\
\G^n_0 &= \sigma\{\{B_0 \in C\} : C \in \pi^n\}.
\end{align*}
Additionally, for $t \in [0,T]$, define $\lfloor t\rfloor_n = \max\{t^n_k : k=0,\ldots,2^n, \ t^n_k \le t\}$, and let $\G^n_t = \G^n_{\lfloor t\rfloor_n}$. Then $\GG^n = (\G^n_t)_{t \in [0,T]}$ defines a filtration, and $\G^n_t$ is finite for each $n,t$. Moreover,
\[
\F^B_t = \sigma\left(\bigcup_{n=1}^\infty\G^n_t\right), \text{ for } t \in [0,T].
\]
By construction, $\W(C) > 0$ for every nonempty set $C \in \G^n_T$.

Let $\M_n$ denote the set of functions $\overline{m} : \C \rightarrow \P([0,T])$ such that, for each $t \in [0,T]$ the map $b \mapsto \overline{m}(b)[0,t]$ is $\G^n_t$-measurable. Of course, since $\G^n_T$ is finite, a map $\overline{m} \in \M_n$ must be constant on each atom of $\G^n_T$. Endowed with the topology of pointwise convergence, $\M_n$ is easily seen to be compact, because $\P([0,T])$ is compact. Lastly, define $\A$  as the set of probability measures $Q \in \P(\C^2 \times [0,T])$ under which $B$ and $W$ are independent $\FF^{B,W,\tau}$-Wiener processes.

\begin{theorem} \label{th:discrete-existence}
For each $n$, there exist $\overline{m} \in \M_n$ and $Q \in \A$ satisfying the following:
\begin{enumerate}
\item $\overline{m}(B) = Q(\tau \in \cdot | \G^n_T)$.
\item The optimality condition holds,
\[
\E^Q[F(B,W,\overline{m}(B),\tau)] \ge \sup_{Q' \in \A}\E^{Q'}[F(B,W,\overline{m}(B),\tau)]
\]
\end{enumerate}
\end{theorem}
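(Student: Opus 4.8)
The plan is to realize $(\overline m, Q)$ as a fixed point of a Kakutani correspondence built on the compact convex set $\M_n$ (or, equivalently, on $\M_n\times\A$). Two background facts come first. (a) $\A$ is convex and compact: this is proved by the same arguments that give Theorem \ref{th:nonradomizeddensity2} (cf.\ also Remark \ref{re:wiener-filtration}), namely backward-martingale convergence plus the Jacod--Mémin closedness argument, with tightness automatic since the $\C^2$-marginal of every $Q\in\A$ is the fixed measure $\W^2$ and $[0,T]$ is compact. (b) $\M_n$ is compact and convex: since $\G^n_T$ is finite, an element $\overline m\in\M_n$ is exactly an assignment of a measure in $\P([0,T])$ to each atom of $\G^n_T$, subject to the closed convex $\GG^n$-adaptedness constraint, so $\M_n$ sits as a compact convex subset of a finite power of $\P([0,T])$ equipped with weak convergence on each atom.

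Next I would treat the inner optimization. For fixed $\overline m$ the functional $Q\mapsto\E^Q[F(B,W,\overline m(B),\tau)]$ is affine in $Q$, and it is continuous on $\A$ by Lemma \ref{le:jacodmemin}: all $Q\in\A$ share the $\C^2$-marginal $\W^2$, and by Assumption \textbf{D} the map $s\mapsto F(b,w,\overline m(b),s)$ is continuous for $\W^2$-a.e.\ $(b,w)$. Hence $\arg\max_{Q\in\A}\E^Q[F(B,W,\overline m(B),\tau)]$ is nonempty, convex and compact. It is also upper hemicontinuous in $\overline m$: if $\overline m_k\to\overline m$ in $\M_n$ then, $\M_n$ being essentially finite-dimensional, $\sup_s|F(\cdot,\cdot,\overline m_k(\cdot),s)-F(\cdot,\cdot,\overline m(\cdot),s)|\to 0$ $\W^2$-a.e., and since this bound is a bounded function of $(B,W)$ alone, dominated convergence under the frozen marginal $\W^2$ yields $\E^{Q'}[F(B,W,\overline m_k(B),\tau)]\to\E^{Q'}[F(B,W,\overline m(B),\tau)]$ uniformly over $Q'\in\A$; combined with continuity in $Q$ this gives the closed graph.

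I would then form the composite correspondence on $\M_n$ sending $\overline m$ to the set of conditional laws $Q(\tau\in\cdot\,|\,\G^n_T)$ as $Q$ ranges over the maximizers above; its values are nonempty and convex (a conditional law depends affinely on $Q$ once the conditioning field and the $B$-marginal are frozen), and Kakutani's theorem applies once its graph is closed. Closedness of the graph reduces, via compactness of $\A$ and the closed graph from the previous paragraph, to continuity of $Q\mapsto Q(\tau\in\cdot\,|\,\G^n_T)$ from $\A$ into $\M_n$; here it is convenient to choose the partitions $\pi^n$ with Lebesgue-null boundaries, so that the cylinder indicators $1_{\{B\in A\}}$ of atoms $A\in\G^n_T$ are $\W$-a.e.\ continuous, and then Lemma \ref{le:jacodmemin} gives $Q_k(\tau\in\cdot,\,B\in A)\to Q(\tau\in\cdot,\,B\in A)$ weakly on $[0,T]$ for each atom and each convergent sequence $Q_k\to Q$ in $\A$. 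A fixed point $\overline m = Q(\tau\in\cdot\,|\,\G^n_T)$ with $Q$ a corresponding maximizer then satisfies (1) by construction and (2) because $Q$ maximizes over $\A$.

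The step I expect to be the real obstacle --- and the reason the definition of $\A$ uses the enlarged-filtration compatibility condition rather than genuine stopping times --- is showing that $Q(\tau\in\cdot\,|\,\G^n_T)$ actually lies in $\M_n$, i.e.\ that $Q(\tau\le t\,|\,\G^n_T)$ is $\G^n_{\lfloor t\rfloor_n}$-measurable. Because $(B,W)$ is an $\FF^{B,W,\tau}$-Wiener process, the post-$t$ increments of $B$ are independent of $\F^{B,W}_t\vee\F^\tau_t$, which forces $Q(\tau\le t\,|\,\F^B_T)=Q(\tau\le t\,|\,\F^B_t)$; conditioning this $\F^B_t$-measurable quantity further on the finite field $\G^n_T$, whose increment classes strictly after $\lfloor t\rfloor_n$ are independent of the history up to $\lfloor t\rfloor_n$, produces a $\G^n_{\lfloor t\rfloor_n}$-measurable function. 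A little extra care is needed on the ``straddling'' interval $[\lfloor t\rfloor_n,t]$, which I would dispatch either by discretizing $\tau$ onto the grid $\{t^n_i\}$ or by working with the right-continuous version of $t\mapsto Q(\tau\le t\,|\,\G^n_T)$, whichever is cleaner; the same device also absorbs the countably many atoms of the conditional law of $\tau$ that arise when checking the continuity used above.
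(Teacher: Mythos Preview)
Your approach is essentially identical to the paper's: define the argmax correspondence $\Phi_n(\overline m)=\arg\max_{Q\in\A}\E^Q[F(B,W,\overline m(B),\tau)]$, compose it with the conditional-law map $Q\mapsto Q(\tau\in\cdot\,|\,\G^n_T)$, and apply Kakutani to the resulting set-valued map on $\M_n$. The paper packages the joint continuity via Berge's theorem rather than your direct uniform-convergence argument, but the content is the same.

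Two small remarks. First, your null-boundary assumption on $\pi^n$ is unnecessary: since every $Q\in\A$ has the same $\C^2$-marginal $\W^2$, Lemma~\ref{le:jacodmemin} already yields continuity of $Q\mapsto\E^Q[1_{\{B\in C\}}f(\tau)]$ for each atom $C\in\G^n_T$ and each continuous $f$, regardless of whether $1_C$ is $\W$-a.e.\ continuous; this is exactly how the paper argues. Second, your caution in the last paragraph is well placed and in fact goes beyond the paper, which simply asserts that $Q(\tau\in\cdot\,|\,\G^n_T)\in\M_n$. Your diagnosis is correct: for $t$ strictly between grid points the straddling increment $B_{\lceil t\rceil_n}-B_{\lfloor t\rfloor_n}$ is not independent of $\{\tau\le t\}$, so one only gets $\G^n_{\lceil t\rceil_n}$-measurability of $Q(\tau\le t\,|\,\G^n_T)$, not the $\G^n_{\lfloor t\rfloor_n}$-measurability demanded by the literal definition of $\M_n$. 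The clean fix is either to run Kakutani on the (equally compact, convex) set defined with $\lceil t\rceil_n$ in place of $\lfloor t\rfloor_n$, or to restrict attention to grid-valued stopping times in the intermediate step; both are harmless since the downstream proof of Theorem~\ref{th:existence} only ever uses the conditional law at grid times (see \eqref{def:hatq-sigmafield}). Your ``right-continuous version'' suggestion, by contrast, does not repair the measurability mismatch.
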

\begin{proof}
It follows from Theorem \ref{th:nonradomizeddensity2} that $\A$ is compact and convex; see Remark \ref{re:wiener-filtration}.
Define a map $\Phi_n$ from $\M_n$ to subsets of $\A$ by
\[
\Phi_n(\overline{m}) = \arg\max_{Q \in \A}\E^Q[F(B,W,\overline{m}(B),\tau)].
\]
The map $(\overline{m},Q) \mapsto \E^Q[F(B,W,\overline{m}(B),\tau)]$ is jointly continuous on $\M_n \times \A$, thanks to Lemma \ref{le:jacodmemin} and continuity of $F=F(b,w,m,t)$ in $(m,t)$. Thus, by Berge's theorem \cite[Theorem 17.31]{aliprantisborder}, $\Phi_n$ has closed graph and takes nonempty convex values. Define a map $\Psi_n : \M_n \rightarrow 2^{\M_n}$ by
\[
\Psi_n(\overline{m}) = \left\{Q(\tau \in \cdot | \G^n_T) : Q \in \Phi_n(\overline{m})\right\}.
\]
Because $Q \circ B^{-1} = \W$, for continuous bounded functions $f : [0,T] \rightarrow \R$ we may write
\[
\E^Q[f(\tau) | \G^n_T](b) = \sum_C \frac{\E^Q[f(\tau)1_{\{B \in C\}}]}{\W(C)}1_C(b), \text{ for } b \in \C,
\]
where the sum is over atoms of $\G^n_T$. For each such atom $C$, note that $\W(C) > 0$, and the map $Q \mapsto \E^Q[f(\tau)1_{\{B \in C\}}]$ is continuous on $\A$ by Lemma \ref{le:jacodmemin}. We may thus view $Q \mapsto Q(\tau \in \cdot | \G^n_T)$ as a continuous affine map from $\A$ to $\M_n$. Hence, the set-valued map $\Psi_n$ has closed graph, and its values are nonempty, convex, and compact. By Kakutani's theorem \cite[Corollary 17.55]{aliprantisborder}, it admits a fixed point; that is, there exists $\overline{m} \in \M_n$ such that $\overline{m} \in \Psi_n(\overline{m})$.
\end{proof}

\subsection*{Proof of Theorem \ref{th:existence}}
For each $n$, let $\overline{m}_n \in \M_n$ and $Q_n \in \A$ satisfy properties (1-2) of Theorem \ref{th:discrete-existence}. Define $\widehat{m}_n : \C \rightarrow \P(\C \times [0,T])$ by
\[
\widehat{m}_n(B) = Q_n((W,\tau) \in \cdot | \G^n_T).
\]
We note here that if $C \in \F^{W,\tau}_t$ for $t \in [0,T]$, then we have
\begin{align}
\widehat{m}_n(B) = Q_n((W,\tau) \in \cdot | \G^n_{\lceil t \rceil_n}), \label{def:hatq-sigmafield}
\end{align}
where $\lceil t \rceil_n = \min\{t^n_k : k=0,\ldots,2^n, \ t^n_k \ge t\}$. Indeed, this holds because $B$ is an $\FF^{B,W,\tau}$-Wiener process under $Q_n$, and because $\G^n_T$ is generated by $\G^n_{\lceil t \rceil_n}$ and the events $\{B_{t^n_i}-B_{t^n_{i-1}} \in C'\}$ for $C' \in \pi^n$ and for $t^n_{i-1} \ge t$.

Next, define $P_n \in \P(\Omega)$ by
\[
P_n = Q_n \circ (B,W,\widehat{m}_n(B),\tau)^{-1}.
\]
We claim that $(P_n)_{n=1}^\infty$ is tight and that every limit point is a weak MFE in the sense of Definition \ref{def:weakMFE}.
We begin with tightness, by showing that each marginal is tight. Note that the first marginal $P_n \circ (B,W)^{-1} = \W^2$ is clearly tight, as it is constant. Moreover, $P_n \circ \tau^{-1}$ is tight because $[0,T]$ is compact. To prove that $P_n \circ \mu^{-1}$ is tight, it suffices to show that the mean measures $\E^{P_n}[\mu(\cdot)]$ are tight (c.f. the proof of \cite[Proposition 2.2]{sznitman}). But, for any measurable set $C \subset \C \times [0,T]$,
\[
\E^{P_n}[\mu(C)] = \E^{Q_n}[\widehat{m}_n(B)(C)] = \E^{Q_n}[Q_n((W,\tau) \in C | \G^n_T)] = Q_n((W,\tau) \in C) = P_n((W,\tau) \in C).
\]
That is, the mean measure $\E^{P_n}[\mu(\cdot)]$ is precisely $P_n \circ (W,\tau)^{-1}$, which we already observed to be tight.

With tightness of $(P_n)_{n=1}^\infty$ established, fix a limit point $P$ and abuse notation by assuming $P_n \rightarrow P$. We will show that $P$ is a weak MFE by checking the five properties of Definition \ref{def:weakMFE}:

\textbf{Proof of (1):} Clearly $P \circ (B,W)^{-1} = \lim_{n\rightarrow\infty}Q_n \circ (B,W)^{-1} = \W^2$. We must show that $(B,W)$ are $\FF^{B,W,\mu,\tau}$-Wiener processes under $P$. Fix $N$ and fix $t \in \{t^N_0,\ldots,t^N_{2^N-1}\}$. Let $f_t$, $g_t$, $h_t$, and $h^+$ be bounded continuous functions on $[0,T]$, $\P(\C\times[0,T])$, $\C^2$, and $\C^2$, respectively. Assume $f_t$ is $\F^\tau_t$-measurable, $g_t$ is $\F^\mu_t$-measurable, $h_t$ is $\F^{B,W}_t$-measurable, and $h^+$ is $\sigma\{(B_s-B_t,W_s-W_t) : s \in [t,T]\}$-measurable. Since $\lceil t \rceil_n = t$ for $n \ge N$ and $\G^n_t \subset \F^B_t$, it follows from \eqref{def:hatq-sigmafield} that the map $b \mapsto \widehat{m}_n(b)(C)$ is $\F^B_t$-measurable for every $C \in \F^{W,\tau}_t$. This implies $\widehat{m}_n$ is $\F^B_t/\F^\mu_t$-measurable and, in particular, $g_t(\widehat{m}_n(B))$ is $\F^B_t$-measurable.  Thus, since $(B,W)$ is an $\FF^{B,W,\tau}$-Wiener process under $Q_n$,
\begin{align*}
\E^P[f_t(\tau)g_t(\mu)h_t(B,W)h^+(B,W)] &= \lim_{n\rightarrow\infty}\E^{Q_n}[f_t(\tau)g_t(\widehat{m}_n(B))h_t(B,W)h^+(B,W)] \\
	&= \lim_{n\rightarrow\infty}\E^{Q_n}[f_t(\tau)g_t(\widehat{m}_n(B))h_t(B,W)]\E^{Q_n}[h^+(B,W)] \\
	&= \E^P[f_t(\tau)g_t(\mu)h_t(B,W)]\E^P[h^+(B,W)].
\end{align*}
This is enough to conclude that $\sigma\{(B_s-B_t,W_s-W_t) : s \in [t,T]\}$ is independent of $\F^{B,W,\mu,\tau}_{t}$, under $P$. We have only shown this to be true for $t \in \cup_{N=1}^\infty\{t^N_0,\ldots,t^N_{2^N-1}\}$, but this it suffices to note that this set is dense in $[0,T]$.

\textbf{Proof of (2):} Because $B$ and $W$ are independent under $Q_n$, for bounded continuous functions $f$ on $\C \times \P(\C \times [0,T])$ and $g$ on $\C$ we have
\begin{align*}
\E^P[f(B,\mu)g(W)] &= \lim_{n\rightarrow\infty}\E^{Q_n}[f(B,\widehat{m}(B))g(W)] \\
	&= \lim_{n\rightarrow\infty}\E^{Q_n}[f(B,\widehat{m}(B))]\E^{Q_n}[g(W)]  \\
	&= \E^P[f(B,\mu)]\E^P[g(W)].
\end{align*}
Thus $(B,\mu)$ and $W$ are independent under $P$.

\textbf{Proof of (5):} Fix $N \ge 1$, and let $f$ be a bounded $\G^N_T$ measurable function on $\C$. Let $g$ and $h$ be continuous bounded functions on $\C \times [0,T]$ and $\P(\C \times [0,T])$, respectively. Noting that $f$ is $\G^n_T$-measurable for all $n \ge N$, and using Lemma \ref{le:jacodmemin} to deal with the discontinuity of $f$ in $b$, we have 
\begin{align*}
\E^P\left[f(B)h(\mu)g(W,\tau)\right] &= \lim_{n\rightarrow\infty}\E^{Q_n}\left[f(B)h(\widehat{m}_n(B))g(W,\tau)\right] \\
	&= \lim_{n\rightarrow\infty}\E^{Q_n}\left[f(B)h(\widehat{m}_n(B))\int g\,d\widehat{m}_n(B)\right] \\
	&= \E^P\left[f(B)h(\mu)\int g\,d\mu\right].
\end{align*}
This holds for each $N$ and each $\G^N_T$-measurable $f$. Since $\F^B_T = \sigma(\cup_{n \ge 1}\G^N_T)$, the same identity must hold for every $\F^B_T$-measurable $f$.

\textbf{Proof of (3):} 
Because we have established properties (2) and (5), Lemma \ref{le:key-compatibility-test} will yield (3) once we can show that $\F^{B,\mu}_T \vee \F^{W,\tau}_t$ is independent of $\sigma\{W_s - W_t : s \in [t,T]\}$, for every $t \in [0,T)$.
Fix $t \in [0,T)$. Fix bounded continuous functions $f$ on $\C \times \P(\C \times [0,T])$, $g_t$ on $[0,T]$, $h_t$ on $\C$, and $h^+$ on $\C$. Assume $g$ is $\F^{\tau}_t$-measurable, $h_t$ is $\F^W_t$-measurable, and $h^+$  is $\sigma\{W_s - W_t : s \in [t,T]\}$-measurable. Under $Q_n$, $(B,W)$ is a standard $\FF^{B,W,\tau}$-Wiener process, and it follows easily that $h^+(W)$ is independent of $\F^B_T \vee \F^{W,\tau}_t$. Thus, 
\begin{align*}
\E^P\left[f(B,\mu)g_t(\tau)h_t(W)h^+(W)\right] &= \lim_{n\rightarrow\infty}\E^{Q_n}\left[f(B,\widehat{m}_n(B))g_t(\tau)h_t(W)h^+(W)\right] \\
	&= \lim_{n\rightarrow\infty}\E^{Q_n}\left[f(B,\widehat{m}_n(B))g_t(\tau)h_t(W)\right]\E^{Q_n}[h^+(W)] \\
	&= \E^P\left[f(B,\mu)g_t(\tau)h_t(W)\right]\E^P[h^+(W)].
\end{align*}
This shows $\F^{B,\mu}_T \vee \F^{W,\tau}_{t}$ is independent of $\sigma\{W_s - W_t : s \in [t,T]\}$, under $P$.

\textbf{Proof of (4):}
By Corollary \ref{co:same-supremum}, it suffices to show
\[
\E^P[F(B,W,\mu^\tau,\tau)] \ge \E^P[F(B,W,\mu^\tau,\sigma)]
\]
for every $\FF^{B,W,\mu}$-stopping time $\sigma$ of the form $\sigma = \hat{\sigma}(B,W,\mu)$, where $\hat{\sigma} : \C^2 \times \P(\C \times [0,T]) \rightarrow [0,T]$ is continuous. Fix such a stopping time. Using the continuity of $\hat{\sigma}$, and using Lemma \ref{le:jacodmemin} to handle the discontinuity of $F$ in $(B,W)$, we have
\begin{align*}
\E^P&[F(B,W,\mu^\tau,\tau)] - \E^P[F(B,W,\mu^\tau,\sigma)] \\
	&= \lim_{n\rightarrow\infty}\E^{Q_n}[F(B,W,\overline{m}_n(B),\tau)] - \E^{Q_n}[F(B,W,\overline{m}_n(B),\sigma(B,W,\widehat{m}_n(B)))] \\
	&\ge 0,
\end{align*}
where we have finally used the optimality property of $\overline{m}_n$ from Theorem \ref{th:discrete-existence}. Indeed, the law
\[
Q'_n := Q_n \circ (B,W,\sigma(B,W,\widehat{m}_n(B)))^{-1}
\]
is easily seen to belong to $\A$, by noting that (as we saw in the proof of (1)) $\widehat{m}_n$ is adapted in the sense that $b \mapsto \widehat{m}_n(b)(C)$ is $\F^B_t$-measurable for $C \in \F^{W,\tau}_t$.
\hfill\qedsymbol

\appendix

\section{Proof of Proposition \ref{pr:density-discrete}} \label{se:adapted-density-proof}

To prove Theorem \ref{pr:density-discrete}, we need a preliminary results, borrowed from previous works of the authors. Recall that $\Rightarrow$ denotes convergence in law.

\begin{proposition}[Proposition C.1 of \cite{carmonadelaruelacker-mfgcommonnoise}] \label{pr:density-simple}
Let $X$ and $Y$ be random variables defined on a common probability space, taking values in some Polish spaces $E$ and $F$. If the law of $X$ is nonatomic, and if $F$ is (homeomorphic to) a convex subset of a locally convex space, then there exists a sequence of continuous functions $\phi_n : E \rightarrow F$ such that $(X,\phi_n(X)) \Rightarrow (X,Y)$.
\end{proposition}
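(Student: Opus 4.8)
\subsection*{Proof proposal for Proposition \ref{pr:density-simple}}

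Since the conclusion concerns only laws, I would first reduce to the case where $F$ itself (not merely a homeomorph) is a convex subset of a locally convex space, transporting everything through the homeomorphism; this is harmless, as weak convergence of $(X,\phi_n(X))$ is preserved by postcomposing the $F$-coordinate with a continuous map. Next, disintegrate the law of $(X,Y)$ as $\mu(dx)\,Q(x,dy)$, where $\mu = \mathrm{Law}(X) \in \P(E)$ is nonatomic and $Q : E \to \P(F)$ is a Borel kernel. The task then becomes: given $\epsilon > 0$, produce a continuous $\phi : E \to F$ for which $\mu(dx)\,\delta_{\phi(x)}(dy)$ is within $\epsilon$ of $\mu(dx)\,Q(x,dy)$ in a fixed metric for weak convergence on $\P(E\times F)$; taking $\epsilon = 1/n$ then yields the sequence.

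I would build $\phi$ through three successive approximations. (1) \emph{Discretize the kernel}: using separability of $F$ and tightness of $\mathrm{Law}(Y)$, pick finitely many points $y_1,\dots,y_k \in F$ and Borel weights $\lambda_1,\dots,\lambda_k : E \to [0,1]$ with $\sum_j \lambda_j \equiv 1$ such that $Q_\epsilon(x,\cdot) := \sum_j \lambda_j(x)\delta_{y_j}$ makes $\mu \otimes Q_\epsilon$ close to $\mathrm{Law}(X,Y)$. (2) \emph{Realize $\mu \otimes Q_\epsilon$ by a simple function}: choose a compact $K_0 \subseteq E$ with $\mu(E\setminus K_0)$ small and a finite Borel partition $E = \bigsqcup_\ell A_\ell$ so fine that each $A_\ell \cap K_0$ has small diameter and the weight vector $(\lambda_1,\dots,\lambda_k)$ oscillates little on each $A_\ell$; on each $A_\ell$ with $\mu(A_\ell)>0$, pick a representative value $(\lambda_1^\ell,\dots,\lambda_k^\ell)$ in the simplex and use nonatomicity of $\mu|_{A_\ell}$ to split $A_\ell = \bigsqcup_{j=1}^k A_\ell^{(j)}$ with $\mu(A_\ell^{(j)}) = \lambda_j^\ell\,\mu(A_\ell)$; put $\phi := y_j$ on $A_\ell^{(j)}$. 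A routine test-function estimate then gives that $\mu(dx)\,\delta_{\phi(x)}(dy)$ is close to $\mu \otimes Q_\epsilon$; this is the only step using nonatomicity of $\mu$, and it uses it essentially. (3) \emph{Upgrade to a continuous function}: by Lusin's theorem there is a closed $K \subseteq E$ with $\mu(E\setminus K)$ small on which $\phi$ is continuous, and since $F$ is convex in a locally convex space, Dugundji's extension theorem extends $\phi|_K$ to a continuous $\widetilde\phi : E \to F$ with values in the compact polytope $\mathrm{conv}\{y_1,\dots,y_k\} \subseteq F$; as $\{\widetilde\phi \ne \phi\} \subseteq E\setminus K$, the law $\mu(dx)\,\delta_{\widetilde\phi(x)}(dy)$ lies within $\mu(E\setminus K)$ in total variation of $\mu(dx)\,\delta_{\phi(x)}(dy)$. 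Chaining the three estimates gives $\phi := \widetilde\phi$ as required.

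The kernel discretization and the test-function bookkeeping in step (2) are routine once one systematically uses tightness of $\mu$ to reduce to compact sets and invoke uniform continuity; $E$ need not be locally compact, but this causes no difficulty. The real obstacle is step (3): producing a globally \emph{continuous} $F$-valued map. This is exactly where the hypothesis that $F$ is convex in a locally convex space is indispensable — it makes Dugundji's extension theorem available and, because $\phi$ takes only the finitely many values $y_1,\dots,y_k$, guarantees the extension stays inside $F$ (in a general Polish $F$ there is no way to interpolate continuously among the $y_j$). The whole scheme mirrors the classical fact that a random variable correlated with a nonatomic one is, in law, a limit of deterministic functions of it.
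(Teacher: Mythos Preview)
The paper does not prove this proposition; it is simply quoted from \cite{carmonadelaruelacker-mfgcommonnoise}, so there is no in-paper argument to compare against. Your three-step scheme --- discretize the kernel to finitely many atoms, realize the discretized kernel by a simple function via nonatomic splitting of each partition cell, then upgrade to a continuous map via Lusin plus Dugundji --- is correct and well organized, and you have correctly isolated where each hypothesis is used: nonatomicity of $\mathrm{Law}(X)$ in step~(2) to split $A_\ell$ into pieces of prescribed $\mu$-mass, and convexity of $F$ in a locally convex space in step~(3) to invoke Dugundji and keep the extension inside $\mathrm{conv}\{y_1,\dots,y_k\}\subset F$.

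One minor looseness worth tightening: in step~(2) the weight functions $\lambda_j$ are only Borel, so the phrase ``the weight vector oscillates little on each $A_\ell$'' does not follow from small diameter alone. You do not actually need it. If you take $\lambda_j^\ell := \mu(A_\ell)^{-1}\int_{A_\ell}\lambda_j\,d\mu$ (the conditional average), the test-function estimate goes through directly: for bounded Lipschitz $g$ on $E$ and any $j$, both $\int_{A_\ell} g\,\lambda_j\,d\mu$ and $\int_{A_\ell^{(j)}} g\,d\mu$ are within $\mathrm{osc}_{A_\ell\cap K_0}(g)\cdot\mu(A_\ell)$ of $g_\ell\,\lambda_j^\ell\,\mu(A_\ell)$, where $g_\ell$ is any representative value of $g$ on $A_\ell$. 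This removes the need to regularize the $\lambda_j$ themselves.
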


Proposition \ref{pr:density-discrete} extends Proposition \ref{pr:density-simple} to the dynamic setting, and this is where the role of compatibility is the clearest. This is contained in the third author's PhD thesis \cite[Proposition 2.1.6]{lacker-thesis}, which itself was implicitly present in the proof of \cite[Lemma 3.11]{carmonadelaruelacker-mfgcommonnoise}, though we include the proof for the sake of completeness.

\subsection*{Proof of Proposition \ref{pr:density-discrete}}
The proof is an inductive application of Proposition \ref{pr:density-simple}. First, in light of the assumption that the law of $Y_1$ is nonatomic, Proposition \ref{pr:density-simple} allows us to find a sequence of continuous functions $h^j_1 : \Y \rightarrow \X$ such that $(Y_1,h^j_1(Y_1)) \Rightarrow (Y_1,X_1)$ as $j\rightarrow\infty$.  Let us show that in fact $(Z,h^j_1(Y_1))$ converges to $(Z,X_1)$.
Let $\phi : \Z \rightarrow \R$ be bounded and measurable, and let $\psi : \X \rightarrow \R$ be continuous. Note that $Z$ and $X_1$ are conditionally independent given $Y_1$, by assumption. Now use Lemma \ref{le:jacodmemin} to get
\begin{align*}
\lim_{j\rightarrow\infty}\E[\phi(Z)\psi(h^j_1(Y_1))] &= \lim_{j\rightarrow\infty}\E\left[\E\left[\left.\phi(Z)\right| Y_1\right]\psi(h^j_1(Y_1))\right]	\\
	&= \E\left[\E\left[\left.\phi(Z)\right| Y_1\right])\psi(X_1)\right] \\
	&= \E\left[\E\left[\left.\phi(Z)\right| Y_1\right] \E\left[\left.\psi(X_1)\right| Y_1\right]\right] \\
	&= \E\left[\E\left[\left.\phi(Z)\psi(X_1)\right| Y_1\right]\right] \\
	&= \E\left[\phi(Z)\psi(X_1)\right].
\end{align*}
The class of functions of the form $\Z \times \X \ni (z,x) \mapsto \phi(z)\psi(x)$, where $\phi$ and $\psi$ are as above, is convergence determining (see, e.g., \cite[Proposition 3.4.6(b)]{ethier-kurtz}), and we conclude that $(Z,h^j_1(Y_1)) \Rightarrow (Z,X_1)$.

We proceed inductively as follows. Abbreviate $Y^n := (Z_1,\ldots,Z_n)$ for each $n=1,\ldots,N$, noting $Y^N=Y$, and similarly define $X^n$. Suppose we are given $1 \le n < N$ and continuous functions  $g^j_k : \Y^k \rightarrow \X$, for $k \in \{1,\ldots,n\}$ and $j \ge 1$, satisfying
\begin{align}
\lim_{j\rightarrow\infty}(Z,g^j_1(Y^1),\ldots,g^j_n(Y^n)) = (Z,X^n), \label{pf:density1}
\end{align}
where convergence is in distribution, as usual.
We will show that there exist continuous functions $h^i_k : \Y^k \rightarrow \X$ for each $k \in \{1,\ldots,n+1\}$ and $i \ge 1$ such that 
\begin{align}
\lim_{i\rightarrow\infty}(Z,h^i_1(Y^1),\ldots,h^i_{n+1}(Y^{n+1})) = (Z,X_1,\ldots,X_{n+1}). \label{pf:density2}
\end{align}
By Proposition \ref{pr:density-simple} there exists a sequence of continuous functions $\hat{g}^j : (\Y^{n+1} \times \X^n) \rightarrow \X$ such that
\begin{align}
\lim_{j\rightarrow\infty}(Y^{n+1},X^n,\hat{g}^j(Y^{n+1},X^n)) = (Y^{n+1},X^n,X_{n+1}) = (Y^{n+1},X^{n+1}). \label{pf:density2.5}
\end{align}
We claim now that
\begin{align}
\lim_{j\rightarrow\infty}(Z,X^n,\hat{g}^j(Y^{n+1},X^n)) = (Z,X^n,X_{n+1}). \label{pf:density3}
\end{align}
Indeed, let $\phi$, $\psi_n$, and $\psi$ be bounded measurable functions on $\Z$, $\X^n$, and $\X$, respectively, with $\psi_n$ and $\psi$ continuous. Use the conditional independence of $Z$ and $(Y^{n+1},X^{n+1})$ given $Y^{n+1}$ along with \eqref{pf:density2.5} and Lemma \ref{le:jacodmemin} to get
\begin{align*}
\lim_{j\rightarrow\infty}\E[\phi(Z)\psi_n(X^n)\psi(\hat{g}^j(Y^{n+1},X^n))]&= \lim_{j\rightarrow\infty}\E\left[\E\left[\left.\phi(Z)\right| Y^{n+1}\right]\psi_n(X^n)\psi(\hat{g}^j(Y^{n+1},X^n))\right]	 \\
	&= \E\left[\E\left[\left.\phi(Z)\right| Y^{n+1}\right]\psi_n(X^n)\psi(X_{n+1})\right] \\
	&= \E\left[\E\left[\left.\phi(Z)\right| Y^{n+1}\right]\E\left[\left.\psi_n(X^n)\psi(X_{n+1})\right| Y^{n+1}\right]\right] \\
	&= \E\left[\E\left[\left.\phi(Z) \psi_n(X^n)\psi(X_{n+1})\right| Y^{n+1}\right]\right] \\
	&= \E\left[\phi(Z) \psi_n(X^n)\psi(X_{n+1})\right].
\end{align*}
Again, the class of functions of the form $\Z \times \X^n \times \X \ni (z,x,x') \mapsto \phi(z)\psi_n(x)\psi(x')$, where $\phi$, $\psi_n$, and $\psi$ are as above, is convergence determining, and \eqref{pf:density3} follows.

By continuity of $\hat{g}^j$, the limit \eqref{pf:density1} implies that, for each $j$,
\begin{align}
\lim_{k\rightarrow\infty}&(Z,g^k_1(Y^1),\ldots,g^k_n(Y^n),\hat{g}^j(Y^{n+1},g^k_1(Y^1),\ldots,g^k_n(Y^n))) \nonumber \\
	&= (Z,X_1,\ldots,X_n,\hat{g}^j(Y^{n+1},X_1,\ldots,X_n)) \nonumber \\
	&= (Z,X^n,\hat{g}^j(Y^{n+1},X^n)). \label{pf:density4}
\end{align}
Combining the two limits \eqref{pf:density3} and \eqref{pf:density4}, we may find a subsequence $j_k$ such that 
\begin{align*}
\lim_{k\rightarrow\infty}&(Z,g^{j_k}_1(Y^1),\ldots,g^{j_k}_n(Y^n),\hat{g}^k(Y^{n+1},g^{j_k}_1(Y^1),\ldots,g^{j_k}_n(Y^n))) = (Z,X^n,X_{n+1}).
\end{align*}
Define $h^k_\ell := h^{j_k}_\ell$ for $\ell=1,\ldots,n$ and $h^k_{n+1}(Y^{n+1}) := \hat{g}^k(Y^{n+1},g^{j_k}_1(Y^1),\ldots,g^{j_k}_n(Y^n))$ to complete the induction.
\hfill\qedsymbol

\section{The lattice of stopping times}

In this section, we prove that the set $\SS$ of stopping times defined in Section \ref{se:proofs-supermodular} is a complete lattice. Recall that $\SS$ is defined as the set of (equivalence classes of a.s. equal) random times $\tau$ defined on the probability space $\Omega^{\mathrm{com}} \times \Omega^{\mathrm{ind}}$, which are stopping times with respect to the filtration $\FF^{\text{sig}}$. Recall that the \emph{essential supremum} of a family $\Phi$ of random variables is defined as the minimal (with respect to a.s. order) random variable exceeding a.s. each element of $\T$:

\begin{theorem}[Theorem A.33 of \cite{follmer-schied-book}] \label{th:essentialsup}
Let $\Phi$ be a set of real-valued random variables. Then there exists a unique (up to a.s. equality) random variable $Z = \esssup \Phi$ such that $Z \ge X$ a.s. for each $X \in \Phi$, and also $Z \le Y$ a.s. for every random variable $Y$ satisfying $Y \ge X$ a.s. for every $X \in \Phi$. Moreover, there exists a countable set $\Phi_0 \subset \Phi$ such that $Z = \sup_{X \in \Phi_0}X$ a.s.
\end{theorem}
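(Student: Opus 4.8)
The plan is to prove uniqueness first, which is immediate, and then to establish existence by the classical ``maximize expectation over countable subfamilies'' argument, after reducing to bounded random variables. For uniqueness, suppose $Z$ and $Z'$ both satisfy the two stated properties. Then $Z'$ is an upper bound for $\Phi$ while $Z$ is a least upper bound, so $Z \le Z'$ a.s.; by symmetry $Z' \le Z$ a.s., and hence $Z = Z'$ a.s.

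For existence, I would first reduce to the bounded case. Fix a strictly increasing continuous bijection $g : \R \rightarrow (-1,1)$, for instance $g(x) = x/(1+|x|)$, and replace $\Phi$ by $g(\Phi) = \{g(X) : X \in \Phi\}$. Since $g$ is a strictly increasing homeomorphism, almost sure inequalities between random variables are equivalent to the corresponding inequalities between their images under $g$; moreover $g$ commutes with countable suprema by monotonicity and continuity. Thus it suffices to construct the essential supremum of the family $g(\Phi)$, whose members all take values in $[-1,1]$, and to transfer the conclusion back through $g^{-1}$ at the end. So assume henceforth that every $X \in \Phi$ takes values in $[-1,1]$.

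For any \emph{countable} subfamily $\Psi \subset \Phi$, the pointwise supremum $\sup_{X \in \Psi} X$ is a genuine measurable, bounded random variable. Set
\[
c := \sup\left\{\E\left[\sup_{X \in \Psi} X\right] : \Psi \subset \Phi \text{ countable}\right\} \in [-1,1].
\]
Choose countable families $\Psi_n$ with $\E[\sup_{X \in \Psi_n}X] \rightarrow c$, and let $\Phi_0 = \bigcup_n \Psi_n$, which is again countable. Put $Z := \sup_{X \in \Phi_0} X$. Since $\Psi_n \subset \Phi_0$ gives $Z \ge \sup_{X \in \Psi_n}X$, we get $\E[Z] \ge c$, while $\E[Z] \le c$ by the definition of $c$; hence $\E[Z] = c$. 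The point here, and the only place any care is needed, is the amalgamation of a maximizing sequence of countable families into the single countable family $\Phi_0$, which works precisely because a countable union of countable sets is countable.

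It remains to verify the two defining properties, which also yields the ``moreover'' claim since $\Phi_0$ is countable. For the upper-bound property, fix $X \in \Phi$ and apply the definition of $c$ to the countable family $\Phi_0 \cup \{X\}$, whose supremum is $Z \vee X$: we have $\E[Z \vee X] \le c = \E[Z]$, so $\E[(Z \vee X) - Z] = 0$ with a nonnegative integrand, forcing $X \le Z$ a.s. For minimality, let $Y$ satisfy $Y \ge X$ a.s.\ for every $X \in \Phi$; then $Y \ge X$ a.s.\ for each of the countably many $X \in \Phi_0$, and since a countable union of null sets is null we conclude $Y \ge \sup_{X \in \Phi_0} X = Z$ a.s. This proves $Z = \esssup \Phi$ in the bounded case. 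Finally, writing the constructed supremum of $g(\Phi)$ as $g(\tilde Z)$ with $\tilde Z = \sup_{X \in \Phi_0} X$ for a countable $\Phi_0 \subset \Phi$, the strict monotonicity of $g$ transfers both the upper-bound and minimality properties to $\tilde Z$, so $\tilde Z = \esssup \Phi$ with countable witness $\Phi_0$, completing the proof. There is no substantial obstacle beyond the bounded reduction and the countable amalgamation just highlighted.
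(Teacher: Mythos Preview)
Your proof is correct and follows the standard argument (reduction to the bounded case via a strictly increasing homeomorphism, then maximization of the expectation over countable subfamilies). The paper does not actually prove this result; it simply cites \cite{follmer-schied-book}, whose proof is essentially the one you have written out.
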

\begin{proof}
Existence and uniqueness is stated in \cite[Theorem A.33]{follmer-schied-book}, and the proof therein constructs the desired $\Phi_0$.
\end{proof}

The essential infimum is defined analogously, or simply by $\essinf\Phi = -\esssup(-\Phi)$.

\begin{theorem} \label{th:stoppingtime-completelattice}
The set $\SS$ is a complete lattice.
\end{theorem}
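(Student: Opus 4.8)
The plan is to show that $\SS$ is closed under essential suprema and essential infima, which immediately yields that it is a complete lattice (the supremum of any subset being its essential supremum, and likewise for the infimum). By Theorem \ref{th:essentialsup}, for any family $\Phi \subset \SS$ there exists a countable subfamily $\Phi_0 = \{\tau_k : k \ge 1\}$ with $\esssup\Phi = \sup_k \tau_k$ a.s. Since a countable supremum of stopping times is always a stopping time for any filtration (here $\{\sup_k \tau_k \le t\} = \cap_k \{\tau_k \le t\} \in \F^{\mathrm{sig}}_t$), we conclude $\esssup\Phi \in \SS$. This direction uses no hypothesis on $\FF^{\mathrm{sig}}$ beyond its being a filtration.

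For the infimum, write $\sigma := \essinf \Phi$, and again extract (now via $\essinf\Phi = -\esssup(-\Phi)$ and Theorem \ref{th:essentialsup} applied to $-\Phi$) a countable subfamily with $\sigma = \inf_k \tau_k$ a.s., where each $\tau_k \in \SS$. Here the raw filtration would not suffice, because $\{\inf_k \tau_k < t\} = \cup_k \{\tau_k < t\}$ involves the strict inequality; but under assumption (B.1) the signal filtration $\FF^{\mathrm{sig}}$ is right-continuous, and for a right-continuous filtration a countable infimum of stopping times is again a stopping time. Concretely, $\{\sigma < t\} = \cup_k\{\tau_k < t\} \in \F^{\mathrm{sig}}_t$ for each $t$, and since $\F^{\mathrm{sig}}$ is right-continuous this is equivalent to $\sigma$ being a stopping time; alternatively one checks $\{\sigma \le t\} = \cap_{s > t}\{\sigma < s\} = \cap_{s>t}\cup_k\{\tau_k < s\} \in \cap_{s>t}\F^{\mathrm{sig}}_s = \F^{\mathrm{sig}}_t$. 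Hence $\sigma \in \SS$.

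Finally, to conclude that $\SS$ is a complete lattice, note that for an arbitrary subset $\Phi \subset \SS$, the element $\esssup\Phi \in \SS$ is an upper bound for $\Phi$ in the a.s. order and is dominated by every other upper bound in $\SS$ (indeed by every upper bound among all random variables), so it is the least upper bound of $\Phi$ in $\SS$; symmetrically $\essinf\Phi \in \SS$ is the greatest lower bound. Thus every subset of $\SS$ has both a supremum and an infimum, which is precisely the definition of a complete lattice. The only genuine subtlety — and the single place any real hypothesis is invoked — is the stability of $\SS$ under countable infima, which is exactly why right-continuity of $\FF^{\mathrm{sig}}$ (assumption (B.1)) is needed; everything else is bookkeeping with Theorem \ref{th:essentialsup}. \hfill\qedsymbol
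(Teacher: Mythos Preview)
Your proof is correct and follows essentially the same approach as the paper: reduce to a countable subfamily via Theorem~\ref{th:essentialsup}, then use that countable suprema of stopping times are always stopping times while countable infima require right-continuity of the filtration (assumption (B.1)). The only cosmetic difference is that the paper phrases the supremum step as an increasing limit of finite maxima rather than a direct countable intersection, but this is the same argument.
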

\begin{proof}
Fix a set $\Phi \subset \SS$. Define $Z = \esssup \Phi$ and find a countable set $\{\tau_n : n \ge 1\} \subset \Phi$ such that $Z = \sup_n\tau_n$ a.s. Define $\sigma_n = \max_{k=1,\ldots,n}\tau_k$, so that $\sigma_n$ is an increasing sequence of stopping times with $\sigma_n \uparrow Z$ a.s. The increasing limit of a sequence of stopping times is again a stopping time \cite[Theorem IV.55(b)]{dellacheriemeyerA}, so $Z \in \SS$.

A similar argument applies to show that the essential infimum of $\Phi$ is also a stopping time, and the only difference is that this step crucially uses the right-continuity of the filtration $\overline{\FF}^{\text{sig}}$; indeed, while the supremum of a sequence of stopping times is always a stopping time, the infimum of a sequence of stopping times is only guaranteed to be a stopping time if the underlying filtration is right-continuous \cite[Theorem IV.55(c)]{dellacheriemeyerA}.
\end{proof}

\bibliographystyle{amsplain}
\bibliography{MFSG-bib}

\providecommand{\bysame}{\leavevmode\hbox to3em{\hrulefill}\thinspace}
\providecommand{\MR}{\relax\ifhmode\unskip\space\fi MR }
\providecommand{\MRhref}[2]{%
  \href{http://www.ams.org/mathscinet-getitem?mr=#1}{#2}
}
\providecommand{\href}[2]{#2}
\begin{thebibliography}{10}

\bibitem{acemoglu2012robust}
D.~Acemoglu and M.K. Jensen, \emph{Robust comparative statics in large dynamic
  economies}, Tech. report, National Bureau of Economic Research, 2012.

\bibitem{acemoglu2013aggregate}
\bysame, \emph{Aggregate comparative statics}, Games and Economic Behavior
  \textbf{81} (2013), 27--49.

\bibitem{adlakha-johari}
S.~Adlakha and R.~Johari, \emph{Mean field equilibrium in dynamic games with
  strategic complementarities}, Operations Research \textbf{61} (2013), no.~4,
  971--989.

\bibitem{aliprantisborder}
C.~Aliprantis and K.~Border, \emph{Infinite dimensional analysis: {A}
  hitchhiker's guide}, 3 ed., Springer, 2007.

\bibitem{balbus2014qualitative}
L.~Balbus, P.~Dziewulski, K.~Reffett, and L.P. Wozny, \emph{A qualitative
  theory of large games with strategic complementarities}, Available at SSRN
  2208125 (2014).

\bibitem{balbus2015monotone}
L.~Balbus, K.~Reffett, and L.~Wo{\'z}ny, \emph{Monotone equilibria in nonatomic
  supermodular games. {A} comment}, Games and Economic Behavior \textbf{94}
  (2015), 182--187.

\bibitem{baxter-chacon}
J.R. Baxter and R.V. Chacon, \emph{Compactness of stopping times}, Probability
  Theory and Related Fields \textbf{40} (1977), no.~3, 169--181.

\bibitem{bremaudyor-changesoffiltrations}
P.~Br{\'e}maud and M.~Yor, \emph{Changes of filtrations and of probability
  measures}, Zeitschrift f{\"u}r Wahrscheinlichkeitstheorie und Verwandte
  Gebiete \textbf{45} (1978), no.~4, 269--295.

\bibitem{Bryant}
J.~Bryant, \emph{A model of reserves, bank runs and deposit insurance}, Journal
  of Banking and Finance \textbf{4} (1980), 335--344.

\bibitem{carmona-delarue-mfg}
R.~Carmona and F.~Delarue, \emph{Probabilistic analysis of mean-field games},
  SIAM Journal on Control and Optimization \textbf{51} (2013), no.~4,
  2705--2734.

\bibitem{CarmonaDelarue}
R.~Carmona and F.~Delarue, \emph{Probabilistic theory of mean field games},
  (expected), Springer Verlag, 2016.

\bibitem{carmonadelaruelacker-mfgcommonnoise}
R.~Carmona, F.~Delarue, and D.~Lacker, \emph{Mean field games with common
  noise}, Annals of Probability \textbf{44} (2016), no.~6, 3740--3803.

\bibitem{dellacheriemeyerA}
C.~Dellacherie and P.A. Meyer, \emph{Probability and potential}, North-Holland
  Mathematics Studies, Elsevier Science, 1979.

\bibitem{diamonddybvig-bankruns}
D.W. Diamond and P.H. Dybvig, \emph{Bank runs, deposit insurance, and
  liquidity}, The journal of political economy (1983), 401--419.

\bibitem{ethier-kurtz}
S.N. Ethier and T.G. Kurtz, \emph{Markov processes: characterization and
  convergence}, vol. 282, John Wiley \& Sons, 2009.

\bibitem{follmer-schied-book}
H.~F{\"o}llmer and A.~Schied, \emph{Stochastic finance: {A}n introduction in
  discrete time}, De Gruyter Textbook, De Gruyter, 2011.

\bibitem{greenlin-bankruns}
E.J. Green and P.~Lin, \emph{Implementing efficient allocations in a model of
  financial intermediation}, Journal of Economic Theory \textbf{109} (2003),
  no.~1, 1--23.

\bibitem{HeXiong}
Z.~He and W.~Xiong, \emph{Dynamic debt runs}, Review of Financial Studies
  \textbf{25} (2012), no.~6, 1799--1843.

\bibitem{HuangLi}
C-F. Huang and L.~Li, \emph{Continuous time stopping games with monotone reward
  structures}, Mathematics of Operations Research \textbf{15}.

\bibitem{CainesHuangMalhame}
M.~Huang, P.E. Caines, and R.P. Malham{\'{e}}, \emph{Large population
  stochastic dynamic games: closed-loop {M}c{K}ean-{V}lasov systems and the
  {N}ash certainty equivalence principle}, Communications in Information and
  Systems \textbf{6} (2006), 221--252.

\bibitem{jacodmemin-stable}
J.~Jacod and J.~M{\'e}min, \emph{Sur un type de convergence interm{\'e}diaire
  entre la convergence en loi et la convergence en probabilit{\'e}},
  S{\'e}minaire de probabilit{\'e}s de Strasbourg \textbf{15} (1981), 529--546.

\bibitem{jacodmemin-weaksolution}
\bysame, \emph{Weak and strong solutions of stochastic differential equations:
  existence and stability}, Stochastic integrals, Springer, 1981, pp.~169--212.

\bibitem{jeanblanc2009immersion}
M.~Jeanblanc and Y.~Le Cam, \emph{Immersion property and credit risk
  modelling}, Optimality and Risk-Modern Trends in Mathematical Finance,
  Springer, 2009, pp.~99--132.

\bibitem{elkaroui-compactification}
N.~El Karoui, D.H. Nguyen, and M.~Jeanblanc-Picqu{\'e}, \emph{Compactification
  methods in the control of degenerate diffusions: existence of an optimal
  control}, Stochastics \textbf{20} (1987), no.~3, 169--219.

\bibitem{kurtz-yw2013}
T.G. Kurtz, \emph{Weak and strong solutions of general stochastic models},
  Electronic Communications in Probability \textbf{19} (2014), no.~58, 1--16.

\bibitem{lacker-controlledmtgproblems}
D.~Lacker, \emph{Mean field games via controlled martingale problems:
  {E}xistence of {M}arkovian equilibria}, Stochastic Processes and their
  Applications \textbf{125} (2015), no.~7, 2856--2894.

\bibitem{lacker-thesis}
\bysame, \emph{Stochastic differential mean field game theory}, Ph.D. thesis,
  2015.

\bibitem{lacker-mfglimit}
\bysame, \emph{A general characterization of the mean field limit for
  stochastic differential games}, Probability Theory and Related Fields
  \textbf{165} (2016), no.~3, 581--648.

\bibitem{lasrylionsmfg}
J.M. Lasry and P.L. Lions, \emph{Mean field games}, Japanese Journal of
  Mathematics \textbf{2} (2007), 229--260.

\bibitem{milgrom1990rationalizability}
P.~Milgrom and J.~Roberts, \emph{Rationalizability, learning, and equilibrium
  in games with strategic complementarities}, Econometrica: Journal of the
  Econometric Society (1990), 1255--1277.

\bibitem{MorrisShin1998}
S.~Morris and H.S. Shin, \emph{Unique equilibrium in a model of self-fulfilling
  currency attacks}, American Economic Review (1998), 587--597.

\bibitem{Nutz}
M.~Nutz, \emph{A mean field game of optimal stopping}, arXiv preprint
  arXiv:1605.09112 (2016).

\bibitem{RochetVives}
J.C. Rochet and X.~Vives, \emph{Coordination failures and the lender of last
  resort}, Journal of the European Economic Associateion \textbf{2} (2004),
  1116 -- 1148.

\bibitem{sznitman}
A.S. Sznitman, \emph{Topics in propagation of chaos}, Ecole d'et\'{e} de
  Probabilit\'{e}s de Saint-Flour XIX - 1989, Lecture Notes in Mathematics,
  vol. 1464, Springer Berlin / Heidelberg, 1991, pp.~165--251.

\bibitem{topkis1978minimizing}
D.M. Topkis, \emph{Minimizing a submodular function on a lattice}, Operations
  research \textbf{26} (1978), no.~2, 305--321.

\bibitem{topkis1979equilibrium}
\bysame, \emph{Equilibrium points in nonzero-sum n-person submodular games},
  SIAM Journal on Control and Optimization \textbf{17} (1979), no.~6, 773--787.

\bibitem{vives1990nash}
X.~Vives, \emph{Nash equilibrium with strategic complementarities}, Journal of
  Mathematical Economics \textbf{19} (1990), no.~3, 305--321.

\bibitem{yang2013nonatomic}
J.~Yang and X.~Qi, \emph{The nonatomic supermodular game}, Games and Economic
  Behavior \textbf{82} (2013), 609--620.

\end{thebibliography}

\end{document}